\newcommand{\nc}{\newcommand}
\newcommand{\rc}{\renewcommand}
\numberwithin{equation}{section}
\newtheorem{thm}{Theorem} [section]
\newtheorem{prop}[thm]{Proposition}
\newtheorem{lem}[thm]{Lemma}
\newtheorem{rmk}[thm]{Remark}
\newtheorem{important note}[thm]{Important Note}
\newtheorem*{thm*}{Theorem}
\nc{\on}{\operatorname}
\nc{\Lemma}{\begin{lem}}
\nc{\enlemma}{\end{lem}}
\nc{\Proof}{\begin{proof}}
\nc{\bA}{{\mathbb A}}
\nc{\bB}{{\mathbb B}}
\nc{\bC}{{\mathbb C}}
\nc{\bD}{{\mathbb D}}
\nc{\bE}{{\mathbb E}}
\nc{\bF}{{\mathbb F}}
\nc{\bG}{{\mathbb G}}
\nc{\bH}{{\mathbb H}}
\nc{\bI}{{\mathbb I}}
\nc{\bJ}{{\mathbb J}}
\nc{\bK}{{\mathbb K}}
\nc{\bL}{{\mathbb L}}
\nc{\bM}{{\mathbb M}}
\nc{\bN}{{\mathbb N}}
\nc{\bO}{{\mathbb O}}
\nc{\bP}{{\mathbb P}}
\nc{\bQ}{{\mathbb Q}}
\nc{\bR}{{\mathbb R}}
\nc{\bS}{{\mathbb S}}
\nc{\bT}{{\mathbb T}}
\nc{\bU}{{\mathbb U}}
\nc{\bV}{{\mathbb V}}
\nc{\bW}{{\mathbb W}}
\nc{\bZ}{{\mathbb Z}}
\nc{\bX}{{\mathbb X}}
\nc{\bY}{{\mathbb Y}}
\def\bbC{{\mathbb C}}
\def\bbN{{\mathbb N}}
\def\bbR{{\mathbb R}}
\def\bbZ{{\mathbb Z}}
\nc{\cA}{{\mathcal A}}
\nc{\cB}{{\mathcal B}}
\nc{\cC}{{\mathcal C}}
\nc{\cD}{{\mathcal D}}
\nc{\cE}{{\mathcal E}}
\nc{\cF}{{\mathcal F}}
\nc{\cG}{{\mathcal G}}
\nc{\cH}{{\mathcal H}}
\nc{\cI}{{\mathcal I}}
\nc{\cJ}{{\mathcal J}}
\nc{\cK}{{\mathcal K}}
\nc{\cL}{{\mathcal L}}
\nc{\cM}{{\mathcal M}}
\nc{\cN}{{\mathcal N}}
\nc{\cO}{{\mathcal O}}
\nc{\cP}{{\mathcal P}}
\nc{\cQ}{{\mathcal Q}}
\nc{\cR}{{\mathcal R}}
\nc{\cS}{{\mathcal S}}
\nc{\cT}{{\mathcal T}}
\nc{\cU}{{\mathcal U}}
\nc{\cV}{{\mathcal V}}
\nc{\cW}{{\mathcal W}}
\nc{\cZ}{{\mathcal Z}}
\nc{\cX}{{\mathcal X}}
\nc{\cY}{{\mathcal Y}}
\nc{\fA}{{\mathfrak A}}
\nc{\fB}{{\mathfrak B}}
\nc{\fC}{{\mathfrak C}}
\nc{\fD}{{\mathfrak D}}
\nc{\fE}{{\mathfrak E}}
\nc{\fF}{{\mathfrak F}}
\nc{\fG}{{\mathfrak G}}
\nc{\fH}{{\mathfrak H}}
\nc{\fI}{{\mathfrak I}}
\nc{\fJ}{{\mathfrak J}}
\nc{\fK}{{\mathfrak K}}
\nc{\fL}{{\mathfrak L}}
\nc{\fM}{{\mathfrak M}}
\nc{\fN}{{\mathfrak N}}
\nc{\fO}{{\mathfrak O}}
\nc{\fP}{{\mathfrak P}}
\nc{\fQ}{{\mathfrak Q}}
\nc{\fR}{{\mathfrak R}}
\nc{\fS}{{\mathfrak S}}
\nc{\fT}{{\mathfrak T}}
\nc{\fU}{{\mathfrak U}}
\nc{\fV}{{\mathfrak V}}
\nc{\fW}{{\mathfrak W}}
\nc{\fZ}{{\mathfrak Z}}
\nc{\fX}{{\mathfrak X}}
\nc{\fY}{{\mathfrak Y}}
\nc{\fa}{{\mathfrak a}}
\nc{\fb}{{\mathfrak b}}
\nc{\fc}{{\mathfrak c}}
\nc{\fd}{{\mathfrak d}}
\nc{\fe}{{\mathfrak e}}
\nc{\ff}{{\mathfrak f}}
\nc{\fg}{{\mathfrak g}}
\nc{\fh}{{\mathfrak h}}
\nc{\fiI}{{\mathfrak i}}  
\nc{\ffi}{{\mathfrak i}}  
\nc{\fj}{{\mathfrak j}}
\nc{\fk}{{\mathfrak k}}
\nc{\fl}{{\mathfrak{l}}}
\nc{\fm}{{\mathfrak m}}
\nc{\fn}{{\mathfrak n}}
\nc{\fo}{{\mathfrak o}}
\nc{\fp}{{\mathfrak p}}
\nc{\fq}{{\mathfrak q}}
\nc{\fr}{{\mathfrak r}}
\nc{\fs}{{\mathfrak s}}
\nc{\ft}{{\mathfrak t}}
\nc{\fu}{{\mathfrak u}}
\nc{\fv}{{\mathfrak v}}
\nc{\fw}{{\mathfrak w}}
\nc{\fz}{{\mathfrak z}}
\nc{\fx}{{\mathfrak x}}
\nc{\fy}{{\mathfrak y}}
\nc{\al}{{\alpha }}
\nc{\ga}{{\gamma }}
\nc{\de}{{\delta }}
\nc{\del}{{\partial }}
\nc{\ep}{{\varepsilon }}
\nc{\vap}{{\epsilon }}
\nc{\ze}{{\zeta }}
\nc{\et}{{\eta }}
\rc{\th}{{\theta }}
\nc{\vth}{{\vartheta }}
\nc{\io}{{\iota }}
\nc{\ka}{{\kappa }}
\nc{\la}{{\lambda }}
\nc{\vrho}{{\varrho}}
\nc{\si}{{\sigma }}
\nc{\ups}{{\upsilon }}
\nc{\vphi}{{\varphi }}
\nc{\om}{{\omega }}
\nc{\Ga}{{\Gamma }}
\nc{\De}{{\Delta }}
\nc{\nab}{{\nabla}}
\nc{\Th}{{\Theta }}
\nc{\La}{{\Lambda }}
\nc{\Si}{{\Sigma }}
\nc{\Ups}{{\Upsilon }}
\nc{\Om}{{\Omega }}
\nc{\hE}{\hat{\cE}}
\nc{\hA}{\widehat{A}}
\nc{\hK}{\widehat{K}}
\nc{\hM}{\hat{\cM}}
\nc{\hN}{\hat{\cN}}
\nc{\hF}{\hat{\cF}}
\nc{\hcA}{\widehat\cA}
\nc{\hcK}{\widehat\cK}
\nc{\tN}{\widetilde\cN}
\nc{\tM}{\widetilde\cM}
\nc{\Coh}{{{\mathcal C}oh}}
\nc{\Loc}{{{\mathcal L}oc}}
\nc{\GR}{{G_\bR}}
 \providecommand{\curlybrackets}[1]{{\{\!\{#1\}\!\}}}
 \nc{\cb}{\curlybrackets}
\nc{\str}{{\hcA}}
\nc{\Spec}{{\on{Spec}}}
\nc{\cext}{{\cE\mathit x\mathit t}}
\nc{\ctor}{{\cT\mathit o\mathit r}}
\nc{\crhom}{{\operatorname{R}\cH\mathit o\mathit m}}
\nc{\chom}{{\cH\mathit o\mathit m}}
\nc{\oh}{{\on{H}}}
\nc{\codim}{{\on{codim}}}
\nc{\Supp}{{\on{Supp}}}
\nc{\coker}{{\on{coker}}}
\nc{\id}{\mathrm{id}}
\nc{\seteq}{\mathbin{:=}}
\nc{\dA}[1]{\operatorname{D_\cA}(#1)}
\nc{\cl}{\colon}
\nc{\Ker}{\on{ker}}
\nc{\wcheck}{\widecheck}
\nc{\rhD}{{\operatorname{Mod}_{hr}(\cD_X)}}
\nc{\rhDL}{{\operatorname{Mod}_{hr}(\cD_X)_\La}}
\nc{\rhE}{{\operatorname{Mod}_{hr}(\cE_X)}}
\nc{\rhEL}{{\operatorname{Mod}_{hr}(\cE_X)_\La}}
\nc{\rhFE}{{\operatorname{Mod}_{hr}(\hat\cE_X)}}
\nc{\rhFEL}{{\operatorname{Mod}_{hr}(\hat\cE_X)_\La}}
\nc{\op}{{\on{P}}}
\nc{\oM}{{\on{M}}}
\nc{\ba}{\begin{array}}
\nc{\ea}{\end{array}}
\nc{\hs}{\hspace*}
\newcommand{\scbul}{{\,\raise.4ex\hbox{$\scriptscriptstyle\bullet$}\,}}
\nc{\dT}{{\mathring{T}}{}^*}
\nc{\oY}{{\mathring{Y}}}
\nc{\oLa}{{\mathring{\La}}}
\nc{\eq}{\begin{eqnarray}}
\nc{\eneq}{\end{eqnarray}}
\nc{\eqn}{\begin{eqnarray*}}
\nc{\eneqn}{\end{eqnarray*}}
\nc{\Per}{\on{\mathcal{P}\mathit{er}}}
\rc{\setminus}{{-}}
\nc{\bigmid}{\;\mathbin{\rule[-1.8ex]{.5pt}{3.8ex}}\;}
\nc{\od}{{\operatorname{D}}}
\nc{\odh}{{\operatorname{D}_{\hcA}}}
\nc{\gl}{{\mathfrak{gl}}}
\nc{\Z}{\bbZ}
\nc{\C}{\bC}
\nc{\be}{\begin{enumerate}}
\nc{\ee}{\end{enumerate}}
\nc{\bnum}{\be[{\rm(i)}]}
\nc{\enum}{\ee}
\nc{\Mod}{\on{Mod}}
\nc{\coh}{\mathrm{coh}}
\nc{\sing}{\mathrm{sing}}
\nc{\drho}{{\mathring{\rho}}}
\nc{\bbL}{\mathbb{L}at}
\nc{\bl}{\bigl}
\nc{\br}{\bigr}
\def\bea{\begin{eqnarray}}
\def\eea{\end{eqnarray}}
\def\beq{\begin{equation}}
\def\eeq{\end{equation}}
\def\beqn{\begin{equation*}}
\def\eeqn{\end{equation*}}
\def\beal{\begin{align*}}
\def\eeal{ \end{align*} }
\def\BET{\begin{theorem}}
\def\ENT{\end{theorem}}
\def\BEP{\begin{proposition}}
\def\ENP{\end{proposition}}
\def\BEL{\begin{lemma}}
\def\ENL{\end{lemma}}
\def\BEC{\begin{corollary}}
\def\ENC{\end{corollary}}
\def\BEE{\begin{example} \rm}
\def\ENE{\end{example}}
\def\BER{\begin{remark} \rm}
\def\ENR{\end{remark}}
\def\BED{\begin{definition} \rm}
\def\END{\end{definition}}
\def\BECJ{\begin{conjecture}}
\def\ENCJ{\end{conjecture}}
\nc{\ber}{\color{red}}
\nc{\er}{\color{black}}
\def\roweq{\nonumber \\ &=& }
\def\rowleq{\nonumber \\  & \leq & }
\def\rowgeq{\nonumber \\ & \geq & }
\def\BEL{\begin{lem}}
\def\ENL{\end{lem}}
\renewcommand{\Re}{\operatorname{Re}}
\renewcommand{\Im}{\operatorname{Im}}
\nc{\cmtk}[1]{\color{cyan}{{\fbox{K}} #1}\color{black}}
\nc{\cmtj}[1]{\color{red}{{\fbox{J}} #1}\color{black}}
\begin{document}

\author{Jari Taskinen}

\address{Department of Mathematics and Statistics, University of Helsinki, P.O.Box 68, FI-00014 Helsinki, Finland}
\email{jari.taskinen@helsinki.fi}
\thanks{Jari Taskinen was supported in part by the Academy of Finland and the V\"ais\"al\"a Foundation}

\author{Kari Vilonen}
 \thanks{Kari Vilonen was supported in part by NSF grants DMS-1402928 \& DMS-1069316, the Academy of Finland,  the ARC grant DP150103525, the Humboldt Foundation, and the Simons Foundation.}

\address{School of Mathematics and Statistics, University of Melbourne, VIC 3010, Australia, and Department of Mathematics and Statistics, University of Helsinki, P.O.Box 68, FI-00014 Helsinki, Finland}
         \email{kari.vilonen@unimelb.edu.au and
 kari.vilonen@helsinki.fi}

\title{Cartan Theorems for Stein manifolds over a discrete valuation base}

\maketitle

\section{Introduction}

In this paper we prove Cartan theorems A and B for Stein manifolds in a relative setting. We work over a base which is a topological discrete valuation ring satisfying certain conditions which we discuss in more detail below. 

We were led to this study by questions that arouse in the work of the second author with Kashiwara in the proof of the codimension-three conjecture for holonomic micro differential systems~\cite{KaVi}. We could obtain a natural proof of the main results of~\cite{KaVi} if we had  in our disposal a relative theory of several complex variables where the base is a (certain) topological DVR. In particular, conjecture 1.8 in \cite{KaVi} -- which states that in a relative setting reflexive coherent sheaves extend uniquely across loci of codimension at least  three -- is a direct analogue of classical results of Trautmann, Siu, and Frisch-Guenot~\cite{T,Siu,FG}.

Spaces of holomorphic functions with values in a topological vector space have been considered at least since Grothendieck~\cite{Gr}. Extending Cartan theorems A and B to the context of holomorphic functions with values on a locally convex topological vector space was considered in the papers of Bungart~\cite{Bu1}, and later also by Leiterer. However, they only consider coherent sheaves that come by extension of scalars from ordinary coherent sheaves.

In this paper we consider a different situation. Let $X$ be a complex manifold and $A$ a topological discrete valuation ring.  We write $\cA_X$ for the sheaf of functions on $X$ with values in $A$. Our goal is to prove Cartan theorems A and B for coherent $\cA_X$-modules when $X$ is a Stein manifold. For our methods to work we impose  technical conditions on $A$ which are formulated  in section~\ref{Our set up}. In particular $A$ will be a subring of the formal power series ring $\bC[[t]]$ satisfying specific ``convergence" conditions. A fundamental example of such convergence conditions, coming up in the context of microdifferential operators,  is given in~\eqref{C norm}. We topologize $A$ as a direct limit of Banach algebras, $A=\varinjlim_{h>0} A_h$, and one of our basic assumptions is that $A$ is dual nuclear Fr\'echet. 
For our arguments to work we have to control the nuclearity of $A$ a bit, as specified in condition~\eqref{1g}. Finally, condition~\eqref{3j}, which we call subharmonicity,  guarantees that we have enough analogues of pseudoconvex domains.  In a technical sense it gives us enough plurisubharmonic functions to carry out the $L^2$-analysis of H\"ormander in our context. It seems reasonable to expect that one obtains a good theory if $A$ is regular local ring satisfying conditions analogous to the ones we pose on discrete valuation rings, but we have not looked into this. In section~\ref{general results} we give two simple examples which show that Cartan theorems fail when $A$ is not local.

Our main result, (Theorem~\ref{main theorem}) is:
\begin{thm*} 
Let $A$ be a discrete valuation ring satisfying conditions~\eqref{cond} and let $X$ be a Stein manifold. If $\cF$ is a coherent  $\cA_X$-module then $\oh^i(X,\cF)=0$ for $i\geq 1$. Furthermore, the sheaf $\cF$ is generated by its global sections. 
\end{thm*}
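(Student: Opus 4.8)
The plan is to follow H\"ormander's $L^2$-strategy for Cartan B, carried out with coefficients in $A$, and then deduce Cartan A in the usual way. The key point is that $A = \varinjlim_{h>0} A_h$ is a nice enough topological ring (dual nuclear Fr\'echet, with the controlled nuclearity of~\eqref{1g} and the subharmonicity~\eqref{3j}) that the scalar-valued $L^2$ existence theorems for $\bar\partial$ on pseudoconvex domains can be upgraded to $A$-valued statements. First I would reduce to the local statement: because $X$ is Stein, it suffices to show that every point has a neighborhood basis of ``$A$-pseudoconvex'' open sets $U$ (polydiscs will do) on which $\oh^i(U,\cF)=0$ for $i\ge 1$ and $\cF$ is globally generated, and then to glue by a \v{C}ech argument using vanishing on intersections. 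Since $\cF$ is coherent, on a small polydisc it has a finite free resolution $\cA_U^{\oplus n_k}\to\cdots\to\cA_U^{\oplus n_0}\to\cF\to 0$, so the whole problem reduces to two things: (a) $\oh^i(U,\cA_U)=0$ for $i\ge1$ on such $U$, and (b) exactness of $\oh^0$ applied to the resolution, i.e.\ surjectivity of maps of section spaces, which in turn follows from (a) together with a diagram chase once one knows the relevant $\oh^1$'s vanish.

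The heart of the argument is therefore the $A$-valued Dolbeault/$\bar\partial$ theory. I would define $\cA_X$-valued $(0,q)$-forms with $L^2$ conditions governed by weights $\varphi$ taken from the plurisubharmonic functions guaranteed by~\eqref{3j}, and prove: on a pseudoconvex $U$, for every $A$-valued $\bar\partial$-closed $(0,q)$-form $u$ with $q\ge1$ there exists an $A$-valued $(0,q-1)$-form $v$ with $\bar\partial v = u$. The mechanism is to work one Banach level at a time: for fixed $h$, run H\"ormander's $L^2$ estimate for $A_h$-valued forms (the estimate is insensitive to the Banach coefficient space, since $A_h$ is just a Banach algebra and the Bochner--Kodaira--Nakano identity tensors through), obtaining a solution $v_h$ with quantitative bounds; then pass to the limit over $h$, using the dual-nuclear-Fr\'echet structure of $A$ and condition~\eqref{1g} to ensure the solutions are compatible enough to define an element of $\varinjlim A_h$ rather than merely of a completion. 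This is where the hypotheses on $A$ are doing real work, and it is the step I expect to be the main obstacle: controlling the $h$-dependence of the H\"ormander constants so that a genuine $A$-valued (not $\hat A$-valued) solution survives the direct limit. This is presumably exactly what the technical condition~\eqref{1g} is designed to make possible.

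Granting the $A$-valued $\bar\partial$-vanishing, Cartan B on polydiscs is immediate from the Dolbeault isomorphism (the Dolbeault complex of $\cA_U$ is a soft, hence $\oh^0$-acyclic in each degree, resolution, so its cohomology computes $\oh^\bullet(U,\cA_U)$), and then Cartan B for an arbitrary coherent $\cF$ on Stein $X$ follows by the standard passage from polydiscs to Stein opens: a \v{C}ech computation with an exhaustion by analytic polyhedra, using that $\oh^i$ vanishes on each polydisc and on the (finite) intersections, and an approximation/Mittag-Leffler argument on the $\oh^0$ level to kill the $\oh^1$ of the exhaustion. For this last step one needs that spaces of $A$-valued sections over nested opens have dense images / satisfy a Mittag-Leffler condition; here again the Fr\'echet-nuclear structure of $A$ makes the classical Oka--Weil-type approximation go through after tensoring. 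Finally, Cartan A --- that $\cF$ is generated by global sections --- is formal from Cartan B in the usual way: pick a point $x$, reduce modulo the maximal ideal $\mathfrak m_x\cA_X$ of the (coherent) ideal sheaf, and use the vanishing of $\oh^1(X, \mathfrak m_x\cA_X\cdot\cF)$ (itself coherent) to lift generators of the stalk/fiber at $x$ to global sections; a locally finite version over a countable set of points, together with coherence, upgrades this to generation in a neighborhood of every point and hence everywhere.
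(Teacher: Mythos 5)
Your high-level plan---$L^2$ methods for $\bar\partial$ on pseudoconvex pieces, Dolbeault resolution, then pass to all of $X$ by an exhaustion with a Mittag--Leffler / approximation argument, finally deducing Cartan~A formally---is indeed the broad architecture of the paper's proof. But two of the steps you describe as straightforward are in fact the crux of the whole paper, and the way you propose to do them would not work.

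First, the claim that ``for fixed $h$, run H\"ormander's $L^2$ estimate for $A_h$-valued forms (the estimate is insensitive to the Banach coefficient space)'' is incorrect. H\"ormander's method is intrinsically Hilbert-space-valued: the closed-range argument requires a Hilbert space structure on the target, and $A_h$ is an $\ell^1$-type Banach algebra, not a Hilbert space. You cannot simply ``tensor through'' Bochner--Kodaira--Nakano with a Banach coefficient space. The actual mechanism is different: one introduces auxiliary Hilbert spaces $H_{h(r)}$ with $\ell^2$-type norms $\Vert \sum a_j t^j\Vert_{2,h(r)}^2 = \sum |a_j|^2 \Vert t^j\Vert_{h(r)}^2$, solves the $\bar\partial$-problem \emph{component by component} in the variable $t$ using the plurisubharmonic weights $W_j(z) = -2\log\Vert t^j\Vert_{h(r)}$ (whose plurisubharmonicity is exactly what condition~\eqref{3j} buys), and then crucially uses the controlled nuclearity~\eqref{1g} to show that a solution bounded in $H_{h(r)}$ actually lands back in $A_{h_1(r)}$ for a slightly smaller level $h_1$. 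Passing back and forth between the $\ell^1$ algebra norm and the $\ell^2$ Hilbert norm, with a small loss of level, is where~\eqref{1g} does its work, and this is not a limit over $h$ of $A_h$-valued estimates as you describe.

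Second, the exhaustion/Mittag--Leffler step is not the routine argument your sketch suggests. The spaces $\cF(X_\nu)$ are countable direct limits of Banach spaces (DF-spaces), and making a telescoping series converge in such a space requires a concrete handle on a bounded set and its Minkowski functional, not just density of restriction maps. This forces one to deliberately lower the level: one introduces sheaves $\cA^{\wcheck h}_X$ where the level $h$ is a \emph{function} on $X$ (chosen small enough that the given cocycle lives at level $h$, and decaying so as to remain in the pseudoconvex regime), then sets $k=h/2$ and works with a bounded set $B(\nu)\subset \cF^{\wcheck k}(X_\nu)$ that absorbs a $0$-neighborhood of $\cF^{\wcheck h}(X_\nu)$. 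The correctors produced by the Mittag--Leffler telescoping live only in the weaker $\wcheck k$-topology, so the final coboundary is a section of $\cF^{\wcheck k}\subset\cF$, not of $\cF^{\wcheck h}$. None of this ``varying-level'' machinery appears in your proposal, and without it the convergence of your approximation/Mittag--Leffler step is not justified. Your reliance on ``the Fr\'echet-nuclear structure of $A$'' at this point is too coarse; the subtlety is that $A$ is DNF, not Fr\'echet, and direct limits of Banach spaces do not behave like Fr\'echet spaces under projective limits and restriction maps.
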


Our proof of this theorem follows the standard strategy of first proving it for compact Stein domains and then extending it to non-compact ones by a limiting process. However, to carry out this process in our context we have to work in a more general setting where there ring $A$ varies along the complex manifold $X$. This set up is studied in section~\ref{varying levels}. 

The paper is organized as follows. In section~\ref{general results} we recall some general old results of Bungart and also give two well-known examples which show that one should not expect the Cartan theorems to hold in great generality. In section~\ref{Our set up} we formulate precise conditions on discrete valuation rings which we will be working with. We also show that standard results of several complex variables hold in our setting, in particular, that our structure sheaf is coherent. 

In section~\ref{varying levels} we introduce the notions of holomorphic functions with values in a varying topological vector space. This is an important technical tool in the proof of the main theorem. 

In sections~\ref{compact section} and~\ref{L2 arguments} we prove the Cartan theorems for compact blocks in $\bC^N$. We do so for structure sheaves which consist of holomorphic functions where the target space varies. To accomplish this we utilize $L^2$-techniques of H\"ormander. In section~\ref{approximation} we prove approximation lemmas which are used in section~\ref{main section} where we prove our main result. 

\section{Some general results}
\label{general results}

We consider a complex manifold $X$ and the sheaf of holomorphic functions $\cA_X$ on $X$ with values in a topological ring $A$.  Let us write $\cO_X$ for the sheaf of holomorphic functions on $X$, as usual. Let us recall that the sheaf $\cO_X$ has a natural structure of sheaf of topological rings if we equip it with the topology uniform convergence on compact sets. Moreover, with this structure $\cO_X$ is a nuclear Fr\'echet sheaf, i.e., for $U\subset X$ open the $\cO_X(U)$ are nuclear Fr\'echet topological rings. Let us consider a topological ring  $A$. We define the structure sheaf $\cA_X$ as follows. For any open $U$ we set $\cA_X(U) = A\hat\otimes \cO_X(U)$; here and in the rest of the paper all topological tensor products are projective tensor products. The sections $\cA_X(U)$ can also be identified with holomorphic functions on $U$ with values in $A$ which we equip with the topology uniform convergence on compact sets. 

Cartan theorems A and B were studied by Bungart for sheaves that arise from ordinary coherent sheaves on $X$ by extending scalars. He obtained the following results:

\begin{thm}(Bungart, \cite{Bu1}[Theorem B in section 11])
\label{Bungart B} 
Let $\cF$ be a coherent analytic sheaf on a Stein space $X$. Then for every Frechet space $E$ we have
$H^q(X, E\hat\otimes\cF) = 0$ for $q>0$.
\end{thm}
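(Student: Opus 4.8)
The plan is to reduce, via a free resolution of $\cF$, to the case $\cF=\cO_X$; to settle that case by tensoring the Dolbeault complex with $E$ and quoting H\"ormander's $L^2$ solution of the $\bar\partial$-equation on Stein manifolds; and to use throughout the following exactness property of the completed projective tensor product, which I call $(\star)$: \emph{if a cochain complex of nuclear Fr\'echet spaces has closed differentials and is exact in some range of degrees, then tensoring it over $\bC$ with any Fr\'echet space $E$ yields a complex which is exact in the same range.} Equivalently, $E\hat\otimes(-)$ carries topologically exact sequences of nuclear Fr\'echet spaces to exact sequences: right exactness of $\hat\otimes$ is automatic, and the remaining injectivity is Grothendieck's theorem that nuclear Fr\'echet spaces are acyclic for $E\hat\otimes(-)$ with $E$ an arbitrary Fr\'echet space. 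I expect $(\star)$ -- together with the routine but not quite trivial bookkeeping needed to check that the sheaves and sequences below really are built from nuclear Fr\'echet spaces with closed range -- to be the main point; it is here that the nuclearity of $\cO_X(U)$ recalled in Section~\ref{general results}, and that of spaces of smooth forms, enter.

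\emph{The case $\cF=\cO_X$.} Let $\cE^{0,\bullet}_X$ be the Dolbeault complex of sheaves of smooth $(0,q)$-forms, so $0\to\cO_X\to\cE^{0,0}_X\xrightarrow{\bar\partial}\cE^{0,1}_X\to\cdots$ is exact. Tensoring this resolution over $\bC$ with $E$ gives $E\hat\otimes\cE^{0,\bullet}_X$; by $(\star)$, applied to the nuclear Fr\'echet sections over polydiscs, this is again a resolution of $E\hat\otimes\cO_X$, and its terms are fine sheaves, being $C^\infty_X$-modules. Hence
\[ H^q(X,E\hat\otimes\cO_X)=H^q\bigl(\Gamma(X,E\hat\otimes\cE^{0,\bullet}_X)\bigr)=H^q\bigl(E\hat\otimes\Gamma(X,\cE^{0,\bullet}_X)\bigr), \]
the last identification because the global sections of the fine sheaf $E\hat\otimes\cE^{0,q}_X$ are by construction $E\hat\otimes\Gamma(X,\cE^{0,q}_X)$. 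Now $\Gamma(X,\cE^{0,\bullet}_X)$ is a complex of nuclear Fr\'echet spaces which, by H\"ormander's theorem on the Stein manifold $X$, is exact in positive degrees; being exact and built from Fr\'echet spaces, its differentials there have closed range. By $(\star)$, $E\hat\otimes\Gamma(X,\cE^{0,\bullet}_X)$ is exact in positive degrees, i.e.\ $H^q(X,E\hat\otimes\cO_X)=0$ for $q\ge1$.

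\emph{General coherent $\cF$.} Since $X$ is Stein of finite dimension, $\cF$ admits a resolution $\cdots\to\cO_X^{\oplus n_1}\to\cO_X^{\oplus n_0}\to\cF\to0$ by free sheaves of finite rank (Forster), with all syzygy sheaves $\cZ_k$ coherent; here $\cZ_{-1}=\cF$ and $0\to\cZ_k\to\cO_X^{\oplus n_k}\to\cZ_{k-1}\to0$ is exact. Over any Stein open $U$ these short exact sequences become topologically exact sequences of nuclear Fr\'echet spaces -- surjectivity on the right is Cartan's Theorem~B for the coherent sheaf $\cZ_{k-1}$, and nuclearity passes from $\cO_X(U)$ to sections of coherent sheaves over Stein opens -- so by $(\star)$, applying $E\hat\otimes(-)$ and sheafifying produces a resolution of $E\hat\otimes\cF$ by the sheaves $(E\hat\otimes\cO_X)^{\oplus n_k}$, each acyclic by the preceding paragraph. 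Dimension shifting along this resolution gives $H^q(X,E\hat\otimes\cF)\cong H^{q+1+j}(X,E\hat\otimes\cZ_j)$ for every $q\ge1$ and every $j\ge0$; taking $j$ with $q+1+j>2\dim_\bC X$ forces the right-hand side to vanish, since $E\hat\otimes\cZ_j$ is an ordinary sheaf on the $2\dim_\bC X$-dimensional paracompact space $X$. Thus $H^q(X,E\hat\otimes\cF)=0$ for all $q\ge1$. Non-compactness of $X$ needs no separate treatment, as H\"ormander's theorem holds on every Stein manifold and all the computations above are global.
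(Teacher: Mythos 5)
Your argument takes a genuinely different route from the paper's. The paper's own proof is a one-liner: pick a Stein cover $\cU$ of $X$, note that the terms of the \v{C}ech complex $C^\cdot(\cU,\cF)$ are nuclear Fr\'echet, and invoke Lemma~\ref{exactness of tensor product} to conclude that $E\hat\otimes(-)$ commutes with taking cohomology of that complex. You replace the \v{C}ech complex by the global Dolbeault complex (for $\cF=\cO_X$, with H\"ormander's $L^2$ theory supplying the scalar exactness) plus dimension shifting along a global finite free resolution (for general $\cF$). Both proofs rest on the very same exactness property of $E\hat\otimes(-)$ on nuclear Fr\'echet spaces -- your $(\star)$ is essentially the paper's Lemma~\ref{exactness of tensor product} -- so neither is more elementary. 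The \v{C}ech route is shorter and does strictly less bookkeeping: it needs no global finite free resolution (Forster), no identification of $\Gamma(X, E\hat\otimes\Omega^{0,q}_X)$ with $E\hat\otimes\Gamma(X,\Omega^{0,q}_X)$, and no covering-dimension cutoff. On the other hand, your fine-resolution route sidesteps the question of why \v{C}ech cohomology with respect to $\cU$ computes the sheaf cohomology of $E\hat\otimes\cF$, which the paper's sketch leaves implicit.

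There is also a real gap in scope. The statement is for Stein \emph{spaces}, but your proof only covers Stein \emph{manifolds}: the Dolbeault complex does not resolve $\cO_X$ at singular points, H\"ormander's $L^2$ theory is formulated on manifolds, and Forster's global finite free resolution theorem is a result about Stein manifolds. For this paper's purposes -- Theorem~\ref{main theorem} is about Stein manifolds -- the restriction is harmless, but as a proof of the cited statement it leaves the singular case untouched. One would either have to embed $X$ as a closed analytic subset of some $\bC^N$, push $\cF$ forward, and verify that $E\hat\otimes(-)$ commutes with the pushforward along the closed embedding, or fall back on a \v{C}ech argument as the paper does.
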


\begin{thm}(Bungart, \cite{Bu1}[Theorem B* in section 17])
Let $\cF$ be a coherent analytic sheaf on the Stein space $X$ and $K$ a holomorphically convex compact subset of $X$, and $E$ is any (quasi-)complete locally convex space. Then $H^q(K,E\hat\otimes\cF) = 0$ for $q>0$.
\end{thm}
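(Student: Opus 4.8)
The plan is to reduce the statement, in three steps, to the classical vanishing of the cohomology of the structure sheaf on a Stein open set, carrying the coefficient space $E$ along at each stage.

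\emph{Step 1: localization at $K$.} Since $X$ is Stein and $K$ is holomorphically convex, $K$ has a fundamental system of open Stein neighbourhoods $U$, and because $K$ is compact one has $H^q(K,\mathcal G)=\varinjlim_{U\supset K}H^q(U,\mathcal G)$ for every sheaf $\mathcal G$. Thus it is enough to bound $H^q(U,E\hat\otimes\cF)$ on Stein opens $U$ and pass to the limit; moreover, since we are free to shrink $U$, on the compact set $K$ we may always work with \emph{finite} Stein (Leray) covers, which is what will let us commute cohomology with products later. For $E$ a Fréchet space this already finishes the proof: Theorem~\ref{Bungart B} gives $H^q(U,E\hat\otimes\cF)=0$ on each Stein $U$, hence $H^q(K,E\hat\otimes\cF)=0$. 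So the actual content is the passage to a general quasi-complete locally convex $E$.

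\emph{Step 2: reduction to the structure sheaf.} Over a Stein neighbourhood of $K$ the coherent sheaf $\cF$ admits a finite resolution
\[
0\longrightarrow\cF_n\longrightarrow\cO^{p_{n-1}}\longrightarrow\cdots\longrightarrow\cO^{p_0}\longrightarrow\cF\longrightarrow0 ,
\]
where $\cF_n$ is locally free of finite rank near $K$ (syzygy theorem on the manifold $X$), hence, after shrinking, a direct summand of a finite free sheaf by Cartan's theorem A for Stein manifolds. Applying $E\hat\otimes(-)$ termwise and using the hypercohomology spectral sequence, the desired vanishing $H^q(K,E\hat\otimes\cF)=0$ for $q>0$ reduces to $H^q(K,E\hat\otimes\cO_X)=0$ for $q>0$ --- provided $E\hat\otimes(-)$ is exact on the resolution and on its sequences of sections. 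On Stein opens the resolution and its section sequences are strict exact complexes of nuclear Fréchet spaces, and completed projective tensoring of such a complex with the complete space $E$ preserves exactness; this is where Grothendieck's theory of topological tensor products and the nuclearity of $\cO_X$ enter.

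\emph{Step 3: the structure sheaf.} It remains to prove $H^q(U,E\hat\otimes\cO_U)=0$ for $U$ Stein and $q>0$. One route is the $\bar\partial$-method: on a pseudoconvex $U$ the inhomogeneous $\bar\partial$-equation admits a solution given by integration against a kernel, and vector-valued integrals of continuous functions over compact sets converge precisely when the target is quasi-complete; so the $\bar\partial$-Poincaré lemma holds with values in $E$, and the Dolbeault complex of $E$-valued forms computes the cohomology and is acyclic in positive degrees. Alternatively, realize $E$ as a closed subspace of a product $\prod_i F_i$ of Banach spaces: on the compact $K$, using a finite Leray cover, the Čech complex of $E\hat\otimes\cO$ is built from finite products of spaces $\cO(V_I)\hat\otimes E$, and since $\cO(V_I)$ is nuclear, $\hat\otimes$ commutes with the product $\prod_i F_i$; exactness of products together with the Fréchet case reduces the vanishing to Banach coefficients $F_i$, and a dévissage through the closed subspace $E\subset\prod_iF_i$ (again using flatness of the nuclear Fréchet spaces $\cO(V_I)$ for $\hat\otimes$) completes the argument. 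Letting $U$ shrink to $K$ finishes the proof.

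The main obstacle is the single thread running through Steps 2 and 3: controlling the completed projective tensor product $\hat\otimes$ when the coefficient space $E$ is non-metrizable, where $\hat\otimes$ is only right exact and its interaction with infinite products, closed subspaces and quotients is delicate. The two features that rescue the argument are the nuclear Fréchet structure of the holomorphic structure sheaf $\cO_X$ (which makes $\hat\otimes$ well behaved on exact sequences of $\cO_X$-modules on Stein opens) and the quasi-completeness of $E$ (which makes the relevant vector-valued integrals converge). It is exactly this mechanism that the present paper must reproduce in its relative setting, with $\cO_X$ replaced by $\cA_X$.
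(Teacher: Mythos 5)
The paper does not prove this statement: it is quoted from Bungart and used as background, and the paper explicitly restricts the remark ``can be proved by a straightforward extensions of scalars argument'' to the first of the three cited theorems (Theorem~\ref{Bungart B}, for Fr\'echet $E$). The reason is exactly the one your Step~1 isolates: Lemma~\ref{exactness of tensor product} requires both factors to be Fr\'echet or both DF, so it does not apply to a general quasi-complete $E$. There is therefore no proof in the paper to compare against; what follows assesses your outline on its own.

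Your outline has two concrete gaps. (1) Step~2 argues on a Stein \emph{manifold}, but the statement is for a Stein \emph{space}: at a singular point of $X$ the local ring $\cO_{X,x}$ is not regular, the local syzygy theorem fails, and no finite free resolution of $\cF$ exists near $x$, so the reduction to $\cO_X$ via a finite resolution collapses. Even on a manifold, the clause ``$\cF_n$ locally free, hence a direct summand of a finite free sheaf by Cartan's theorem A'' is incorrect: Cartan~A provides surjections $\cO^p\twoheadrightarrow\cF_n$, not split monomorphisms, and a locally free sheaf on a Stein manifold need not be stably free. A correct dimension shift instead runs a (possibly infinite) free resolution past $2\dim_{\bC}X$ and uses $H^m(K,\cG)=0$ for $m>2\dim_{\bC}X$ and any sheaf $\cG$ on the compact $K$, which needs neither finiteness nor splitting. (2) The crux --- exactness of $E\hat\otimes(-)$ on strict exact sequences of nuclear Fr\'echet spaces when $E$ is only quasi-complete --- is precisely what separates Theorem~B* from Theorem~B, and you invoke it as a black box; likewise the ``embed $E$ in a product of Banach spaces'' variant requires commuting $\hat\otimes$ (through a nuclear Fr\'echet factor) with arbitrary products and with closed subspaces, nontrivial facts you assert without establishing. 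Bungart's actual argument runs through vector-valued integral formulas (your route~(a)), with quasi-completeness used to guarantee convergence of the $E$-valued integrals; if you pursue that route you still owe the verification that the $\bar\partial$-solution kernels carry $E$-valued data to $E$-valued solutions with the required regularity, which is the technical content of Bungart's paper.
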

\begin{thm}(Bungart, \cite{Bu1}[Theorem A in section 17]) Let $X$ be Stein space, $\cF$ is a coherent sheaf of $\cO_X$-modules on $X$,  and $E$ is any (quasi-)complete locally convex space. Then $E\hat\otimes
\cF$ is generated by global sections.  
\end{thm}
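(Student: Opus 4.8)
The plan is to obtain this Theorem~A as a consequence of the two preceding Theorems~B of Bungart, following the classical scheme by which Cartan's Theorem~A is deduced from Theorem~B, while carrying along the functional-analytic bookkeeping that the coefficient space $E$ makes necessary. First I would reduce to a local cohomological statement: fixing $x_0\in X$, it suffices to show that the germs at $x_0$ of the global sections of $E\hat\otimes\cF$ generate the stalk $(E\hat\otimes\cF)_{x_0}$ as an $\cO_{X,x_0}$-module. I would establish a version of this on each holomorphically convex compact subset of $X$, using the compact form of Bungart's Theorem~B stated above, and then pass to the limit along a Stein exhaustion $K_1\subset K_2\subset\cdots$ of $X$.

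For the compact (or local) step, let $\cI_{x_0}\subset\cO_X$ be the ideal sheaf of the point $x_0$. Since $\cF$ is coherent, each $\cI_{x_0}^{\,j}\cF$ is coherent and $\cF/\cI_{x_0}^{\,j}\cF$ is supported at $x_0$ with finite-dimensional stalk $\cF_{x_0}/\fm_{x_0}^{\,j}\cF_{x_0}$. I would apply $E\hat\otimes(-)$ to
\[
0\longrightarrow\cI_{x_0}^{\,j}\cF\longrightarrow\cF\longrightarrow\cF/\cI_{x_0}^{\,j}\cF\longrightarrow0 ,
\]
which preserves exactness here since this is a topologically exact sequence of nuclear Fréchet $\cO_X$-modules — a point that is part of Bungart's formalism and is already used in the proof of Theorem~\ref{Bungart B} — and then invoke the compact version of Theorem~B for the coherent sheaf $\cI_{x_0}^{\,j}\cF$ to conclude that the restriction
\[
H^{0}\!\bigl(K,E\hat\otimes\cF\bigr)\longrightarrow\bigl(E\hat\otimes\cF\bigr)_{x_0}\big/\fm_{x_0}^{\,j}\bigl(E\hat\otimes\cF\bigr)_{x_0}
\]
is surjective for every $j$, whenever $K$ is holomorphically convex with $x_0$ in its interior. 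In particular the sections over $K$ are $\fm_{x_0}$-adically dense in the stalk.

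The hard part will be to upgrade this $\fm_{x_0}$-adic density to genuine generation of the stalk: one cannot simply quote Nakayama, because $(E\hat\otimes\cF)_{x_0}\cong E\hat\otimes\cF_{x_0}$ need not be a finitely generated $\cO_{X,x_0}$-module when $\dim E=\infty$. The way around this is to exploit that $\cF$ is coherent \emph{over} $\cO_X$: by the classical Cartan Theorem~A for $\cF$ one can pick $t_1,\dots,t_n\in H^{0}(X,\cF)$ whose germs generate $\cF_{x_0}$, hence (by coherence) generate $\cF$ on a Stein neighbourhood $U$ of $x_0$; applying $E\hat\otimes(-)$ to $0\to\cK\to\cO_X^{\,n}|_U\to\cF|_U\to0$ shows the $t_i$ generate $(E\hat\otimes\cF)_{x_0}$ over $(E\hat\otimes\cO_X)_{x_0}$, and since each $t_i$ is global, multiplication by $t_i$ sends global sections of $E\hat\otimes\cO_X$ to global sections of $E\hat\otimes\cF$. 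This reduces everything to the structure sheaf $\cF=\cO_X$, where the claim — that $(E\hat\otimes\cO_X)_{x_0}$ is generated over $\cO_{X,x_0}$ by germs of global $E$-valued holomorphic functions — I would handle by combining the surjectivities onto the $\fm_{x_0}$-adic quotients above with a Runge–Oka–Weil approximation argument for $E$-valued holomorphic functions along the exhaustion and a Mittag-Leffler limiting procedure. The main obstacle is precisely this pair of interlocking points: justifying the exactness of the completed tensor products for the merely (quasi-)complete locally convex space $E$, and making the $E$-valued approximation/limiting argument go through — and this is exactly what Bungart's apparatus is designed to supply.
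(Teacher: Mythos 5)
First, note that the paper does not prove this statement: it is imported verbatim from Bungart (\cite{Bu1}, Theorem A in section 17), and no argument is given in the text. So there is no ``paper's proof'' to compare your sketch against; I can only assess it on its own terms.

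Your reduction to the structure sheaf case is correct and a good idea: by classical Cartan A choose $t_1,\dots,t_n\in H^0(X,\cF)$ generating $\cF_{x_0}$, tensor the local presentation $\cO_X^n|_U\twoheadrightarrow\cF|_U$ with $E$, and observe that multiplication by the global $t_i$ carries germs of global sections of $E\hat\otimes\cO_X$ to germs of global sections of $E\hat\otimes\cF$. But this places all the weight on the claim that $(E\hat\otimes\cO_X)_{x_0}$ is generated over $\cO_{X,x_0}$ by germs of global $E$-valued holomorphic functions, and that is precisely where your sketch has a genuine gap. The $\fm_{x_0}$-adic density you obtain from the compact form of Theorem B (surjectivity onto the quotients by $\fm_{x_0}^j$) is real but, as you yourself note, cannot be promoted to actual generation by Nakayama, since $(E\hat\otimes\cO_X)_{x_0}$ is not finitely generated over $\cO_{X,x_0}$ once $\dim E=\infty$; and ``Runge--Oka--Weil plus Mittag-Leffler'' is a name rather than an argument: it is not at all clear how a sequence of global $E$-valued approximants, converging along an exhaustion, is supposed to produce a \emph{finite} $\cO_{X,x_0}$-linear combination of global sections equal to a prescribed germ. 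You have correctly located the difficulty but not resolved it. (For contrast, the paper's own Cartan A in Theorem~\ref{main theorem} is for $\cA_X$-coherent sheaves, whose stalks are finitely generated over the local ring $\cA_{X,x}$, so Cartan B followed by Nakayama suffices; that mechanism is exactly what is unavailable in Bungart's setting, which is why your worry about infinite-dimensional $E$ is well founded.)
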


The first theorem~\ref{Bungart B} can be proved by a straightforward extensions of scalars argument. To explain it, let us recall the following:
\begin{lem} 
\label{exactness of tensor product}
Let $F$ be a locally convex space and 
$$
0 \rightarrow E_1\xrightarrow\alpha  E_2\xrightarrow\beta E_3 \rightarrow 0
$$
a topologically exact sequence of locally convex spaces. Assume that either $F$ or $E_2$ is nuclear and that
$F$ and $E_2$ are both Frechet spaces or both DF spaces. Then the sequence 
$$
0 \rightarrow F \hat\otimes E_1  \xrightarrow {1\hat\otimes\alpha} F \hat\otimes  E_2  \xrightarrow{1\hat\otimes\beta}  F \hat\otimes E_3  \rightarrow 0
$$
is exact.
\end{lem}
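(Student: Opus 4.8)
The plan is to deduce the lemma from Grothendieck's and Schwartz's theory of topological tensor products, playing the two cases (both $F$ and $E_2$ Fr\'echet, or both $DF$) off against each other by duality. As a preliminary remark, nuclearity propagates down the sequence: a subspace and a quotient of a nuclear space are nuclear, so if $E_2$ is nuclear then so are $E_1$ and $E_3$, and in any case at least one factor in each of the three completed tensor products in question is nuclear.

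First I would record the two formal halves. For an arbitrary locally convex space $F$ the uncompleted projective tensor product is right exact, and completion preserves this in the metrizable and in the $DF$ setting: since $\beta\colon E_2\to E_3$ is a topological quotient map, $1\hat\otimes\beta\colon F\hat\otimes E_2\to F\hat\otimes E_3$ is again a topological quotient map, in particular surjective. As $(1\hat\otimes\beta)\circ(1\hat\otimes\alpha)=0$, this already gives exactness at $F\hat\otimes E_3$ and the inclusion $\operatorname{im}(1\hat\otimes\alpha)\subseteq\ker(1\hat\otimes\beta)$. For the left end I would invoke nuclearity: on all the spaces at hand the projective and injective tensor topologies coincide, so $1\hat\otimes\alpha$ may be computed with the injective tensor product (equivalently, in this completeness range, Schwartz's $\varepsilon$-product), which is left exact because it turns the topological embedding $\alpha$ into a topological embedding $1\hat\otimes_\varepsilon\alpha$ onto a closed subspace. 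Hence $1\hat\otimes\alpha$ is injective with closed range.

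What is left, and what I expect to be the real obstacle, is middle exactness, $\ker(1\hat\otimes\beta)\subseteq\operatorname{im}(1\hat\otimes\alpha)$: neither the projective nor the injective completed tensor product is exact in the middle for general spaces, so this step must genuinely use the coincidence of the two topologies forced by nuclearity together with the metrizability (resp.\ $DF$) hypothesis that licenses the open mapping theorem. In the Fr\'echet case both $F\hat\otimes E_2/\operatorname{im}(1\hat\otimes\alpha)$ and $F\hat\otimes E_3$ are Fr\'echet, so by the open mapping theorem it suffices that the canonical continuous bijection between them be injective; this is precisely Grothendieck's theorem that a nuclear Fr\'echet space is \emph{flat}, i.e.\ that $F\hat\otimes(-)$ sends short exact sequences of Fr\'echet spaces to short exact sequences, and one may also see it directly by checking that every continuous seminorm on $F\hat\otimes_\varepsilon E_2$ annihilating $\operatorname{im}(1\hat\otimes\alpha)$ factors through $F\hat\otimes_\varepsilon E_3$. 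The $DF$ case I would then obtain by duality: taking strong duals turns the hypothesis into a topologically exact sequence $0\to E_3'\to E_2'\to E_1'\to 0$ of Fr\'echet spaces with $F'$ nuclear Fr\'echet (the nuclearity assumptions being stable under this passage, and the nuclear/dual-nuclear correspondence keeping the ambient subspaces and quotients within the well-behaved range), whereupon the Fr\'echet case applies and one dualizes back using $(F'\hat\otimes E_i')'_b\cong F\hat\otimes E_i$ for nuclear spaces and the exactness of strong duality on such sequences. Beyond the middle-exactness input, the argument is bookkeeping with Grothendieck's and Schwartz's results.
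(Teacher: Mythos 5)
The paper does not prove this lemma: it is prefaced by ``let us recall the following'' and immediately followed by its application to the \v{C}ech complex, so it is cited as a known consequence of Grothendieck's theory of topological tensor products rather than established in the text. Your sketch is a faithful outline of that standard argument. You correctly separate the formal halves (right exactness of $\hat\otimes_\pi$ on quotient maps in the Fr\'echet and DF ranges; left exactness of $\hat\otimes_\varepsilon$ on topological embeddings, invoked via the nuclear coincidence $\pi=\varepsilon$) from the genuinely analytic middle-exactness step, which you correctly attribute to Grothendieck's flatness theorem for nuclear Fr\'echet spaces, with the DF case reduced to it by passing to strong duals.

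Two small points are worth tightening, though neither affects correctness. In the middle-exactness paragraph, the phrase ``it suffices that the canonical continuous bijection between them be injective'' conflates what is to be proved (injectivity of the canonical continuous \emph{surjection} $F\hat\otimes E_2/\operatorname{im}(1\hat\otimes\alpha)\to F\hat\otimes E_3$) with the desired conclusion; the open mapping theorem is not what gives middle exactness, only that the algebraic isomorphism is topological once injectivity is known, and injectivity is exactly the flatness statement. And in the DF-by-duality passage, the assertion that dualizing a topologically exact sequence of DF spaces produces a topologically exact sequence of Fr\'echet spaces is not automatic for arbitrary DF spaces; it is here that one should invoke that nuclear DF spaces are Montel, hence reflexive, so that the transposed surjection $E_2'\to E_1'$ is a genuine quotient map and not merely a continuous map with dense range, and so that dualizing back with $(F'\hat\otimes E_i')'_b\cong F\hat\otimes E_i$ is legitimate.
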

Let us choose a Stein cover $\cU$ of $X$ and form the Chech complex $C^\cdot(\cU,\cF)$. As the topological vector spaces in $C^\cdot(\cU,\cF)$ are nuclear Frechet the lemma implies that tensoring with $E$ commutes with taking cohomology and thus we obtain theorem~\ref{Bungart B}. 

Our interest lies in finding a good class of topological algebras $A$ so that the Cartan theorems hold for coherent $\cA_X$-modules. We present two standard examples which illustrate that one has to exercise some caution if one is to  generalize these theorems for coherent sheaves of $\cA_X$-modules even for some rather reasonable $A$.

\subsection{Example 1}

Let us take $A=\bC[t]$ and $X=\bC$. We equip $A$ with its direct limit topology induced by finite dimensional subspaces. Note that the sheaf $\cA_\bC$ is coherent.  We will consider the following exact sequence:
\begin{equation*}
0 \to \cI_\bZ \to \cO_\bC \to \cO_\bZ \to 0
\end{equation*}
where we consider $\bZ \subset \bC$ as a sub variety. We can tensor this sequence with $A$ to obtain an exact sequence
\begin{equation*}
0 \to \cI^A_\bZ \to \cA_\bC \to \cA_\bZ \to 0
\end{equation*}
where we have written, as before, $\cA_Y$ for the sheaf of holomorphic functions with values in $Y$. Now we see that on the level of global sections the last two terms become
\begin{equation*}
\Gamma(\bC, \cA_\bC) = \{\sum_{i=0}^m f_i t^i\mid f_i\in \cO_\bC(\bC)\} \to \{(n,\sum_{i=0}^{m_n} a_{i,n}t^i)\} = \Gamma(\bC, \cA_\bZ) 
\end{equation*}
and the map is given by
\begin{equation*}
\sum_{i=0}^m f_i t^i \mapsto (n,\sum_{i=0}^{m_n} f_i(n)t^i)\,.
\end{equation*}
It is clear that the element
\begin{equation*}
 (n,t^n)
\end{equation*}
cannot come from any $\sum_{i=0}^m f_i t^i $. Thus, from the exact sequence 
\begin{equation*}
0 \to \Gamma(\bC,\cA_\bZ) \to\Gamma(\bC, \cA_\bC)  \to \Gamma(\bC, \cA_\bZ)  \to \oh^1(\bC,\cA_\bZ) \to \dots
\end{equation*}
we conclude that
\begin{equation*}
\oh^1(\bC,\cI^A_\bZ ) \neq 0\,,
\end{equation*}
although $\cI^A_\bZ $ is coherent. Thus, Cartan's theorem B fails in this case.

\subsection{Example 2}

 Let us now take as our $A$ the ring $A=\bC[t,t^{-1}]$ and for our $X$ we take $X=\bC/\bZ$. Again we equip $A$ with its direct limit topology induced by finite dimensional subspaces. We consider the constant sheaf $\cA_\bC$ on $\bC$ and consider the $\bZ$-action on it via $(n\cdot f)(t) = t^n f(t-n)$.  Viewed in this manner $\cA_\bC$ is a $\bZ$-equivariant sheaf on $\bC$ and so it induces a sheaf $\cA_\bC/\bZ$ on $X=\bC/\bZ$. This sheaf is clearly locally free, so it is coherent. We now consider
 \begin{equation*}
\Gamma(\bC/\bZ,\cA_\bC/\bZ ) = \Gamma(\bC,\cA_\bC)^\bZ \,.
\end{equation*}
Clearly,
\begin{equation*}
\Gamma(\bC,\cA_\bC)^\bZ = 0\,.
\end{equation*}
Thus, Cartan's theorem A fails as the sheaf is not generated by its global sections because there are not any.

\section{Our set up}
\label{Our set up}

In this section we explain the conditions we will be imposing on the topological ring $A$. 
We continue to consider a complex manifold $X$ and the sheaf of holomorphic functions $\cA_X$ on $X$ with values in a topological ring $A$ as was explained in the previous section. 

From the point of view of \cite{KaVi} the ring of interest is the following regular local ring A. Consider the formal power series ring $\hA = \bC[[t]]$. It is a discrete valuation ring. We define a subring $A$ of $\hA$ in the following manner. 
For any $h>0$ we define a norm $\|\ \|_h$ on $\hA$ by the formula
\begin{equation}
\label{C norm}
\|\sum_{j=0}^\infty a_jt^j\|_{_h}\ = \ \sum_{j=0}^\infty |a_j|\frac{h^j}{j!} 
\,.
\end{equation}
We write $A_h$ for the subring consisting of elements $a$ of $\hA$ with 
$\| a\| _h<\infty$. The ring $A_h$ is a Banach local ring as is not so difficult to see. 
Finally, we set 
\begin{equation*}
A = \varinjlim _{h \to 0}A_h\,.
\end{equation*}
The topological ring $A$ is a dual nuclear Fr\'echet discrete valuation ring, a DNF DVR. 

We will next define a class of topological rings that we will be working with which includes the example discussed above. 
 First of all, we need to impose reasonable conditions on $A$ so that $\cA_X$ is coherent and so that the stalks $\cA_{X,x}$ are regular local rings. Thus, we have to assume that $A$ is a regular local ring. In light of Example 1 of section~\ref{general results} this is a reasonable assumption anyway. Let us write $\fm$ for the maximal ideal in $A$ and let us assume that the dimension of $A$ is $r$. We complete $A$ with respect to $\fm$ to obtain a complete local ring $\hat A  \cong \bC[[t_1, ... , t_r]]$. Then 
\begin{equation*}
\bC[t_1, ... , t_r] \subset A \subset \hat A  \cong \bC[[t_1, ... , t_r]]\,.
\end{equation*}
Hence $A$ can be viewed as  consisting of power series in $r$ variables satisfying some kind of a ``convergence" condition. 
The main results in this paper should hold for regular local rings of any dimension. However, we will make the further assumption that $\dim A=1$, i.e., that $A$ is a DVR. As $A\subset \bC[[t]]$, any element $f\in A$ can be written as
\beq
\label{representation}
f \ = \ \sum_{j=0}^\infty a_jt^j\,.
\eeq
We can thus view the $A$ as providing us with a (usually very small!) neighborhood of the origin in $\bC$. 

We assume that the topology on $A$ is given as a direct limit of Banach algebras, i.e., that
\begin{equation*}
A\ = \ \varinjlim_{h>0} A_h
\end{equation*}
where the $A_h$, with $0<h<S$ for some fixed $S\in\bR^+$,  are commutative Banach algebras with norm $\|\  \|_h$ and we will assume furthermore that the maps $A_h \to A_k$, for $h>k>0$ are nuclear ring homomorphisms which we can assume to be inclusions.

 We choose the norms $\|\ \|_h$ in such a way that
\begin{equation}
\|\sum_{j=0}^\infty a_jt^j\|_{h}\ = \ \sum_{j=0}^\infty |a_j| \| t^j\|_h\,.
\label{1fNEW}
\end{equation}
(We remark that this  assumption is not as restrictive as it might appear: since we will anyway 
 assume that $A$ is a dual nuclear Fr\'echet space, see below, assuming in 
addition  only that  $\{ t^i | i=0,1,\ldots\}$ is a Schauder basis for $A$ would imply that the norms
\eqref{1fNEW} give $A$ its own topology, see~\cite[Theorems 10.1.2 and 10.1.4.]{P}. Furthermore, the 
Schauder basis property follows by just  assuming that the (unique) representation~\eqref{representation} converges in the topology of $A$ for every $f$.)

For the norms \eqref{1fNEW} to give us a Banach algebra it is necessary and sufficient that 
\beqn
\Vert t^{j+l}  
\Vert_h  \leq \Vert t^j \Vert_h \Vert t^l \Vert_h \qquad \text{for all} \ j,l
 \label{1f}
\eeqn
as is easy to show. The  Banach algebra $A_h$  then consists of $\sum_{j=0}^\infty a_jt^j$ such that $\|\sum_{j=0}^\infty a_jt^j\|_{h} = \sum_{j=0}^\infty |a_j| \| t^j\|_h$ is finite.

The simplest such an $A$ is given by germs of holomorphic functions at the origin. In that case
\begin{equation*}
\|\sum_{j=0}^\infty a_jt^j\|_{h}\ = \ \sum_{j=0}^\infty |a_j| h^j \qquad \text{and then} \  \ \| t^j\|_h=h^j
\,.
\end{equation*}

For any $h$ we can also form the following two topological rings:
\begin{equation*}
A_{\wcheck  h}\ = \ \varinjlim_{k>h} A_k
\end{equation*}
and
\begin{equation*}
A_{\widehat h} = \ \varprojlim_{h<k} A_k\,.
\end{equation*}
As we have assumed that the maps $A_h \to A_k$, for $h>k>0$ are nuclear ring homomorphisms we see that $A_{\wcheck  h}$ is a DNF (dual nuclear Fr\'echet ) ring and the $A_{\widehat h}$ is a NF (nuclear Fr\'echet ) ring. In our example of germs of holomorphic functions the ring $A_{\wcheck  h}$ is the ring of holomorphic functions on the (compact) closed disk or radius $h$ and the ring $A_{\widehat h}$ is the ring of holomorphic functions on the open disk or radius $h$. The $A_{\wcheck  h}$  and $A_{\widehat h}$ are  the analogues of these familiar constructions and  the ring $A_{\wcheck  h}$ will play an important role in the rest of the paper. 

\subsection{The assumptions on the families $A_h$}

We will now formulate precise conditions on the $\Vert t^j \Vert_h$ which will be in force for the rest of the paper. We will first list all the conditions and then explain their meaning. To that end let us write 
\beqn
R(h,j) := \frac{\Vert t^{j+1} \Vert_h}{ \Vert t^{j} \Vert_h} .
\eeqn
and
\beqn
\label{Nj}
N_j : [0,S] \to \bbR^+ \ , \ \  N_j(h) := \Vert t^j \Vert_h^2 
\eeqn
for every $j \in \bbN$.

\begin{subequations}
\label{cond}
First of all, in order for the $A_h$ to be Banach algebras we require
\bea
\Vert t^{j+l}  
\Vert_h  \leq \Vert t^j \Vert_h \Vert t^l \Vert_h \qquad \text{for all} \ j,l .\qquad  \text{(Banach algebra)}
\eea
For the purposes of the arguments we furthermore normalize things so that
\bea
 \Vert t^j \Vert_h \leq 1 \ \  \text{and}\ \ R(h,j) \leq 1 \ \ \text{for all} \ \ j\in\bN \,.\qquad  \text{(normalization)}  \label{22NEW}
\eea
In particular, the sequence $ \Vert t^j \Vert_h$ is decreasing. 

We also assume that
\bea
{
R(h,j) \to 0  \ \ \mbox{as} \ j \to \infty. \qquad \text{(locality)}
} \label{2c}
\eea
This condition implies that the  $A_h$ are local rings: the numbers $\Vert t^j\Vert_h$ have to decay
fast enough, faster than exponentially, as $j \to \infty$.  We could relax this condition a bit, but it simplifies the discussion to pose it. Basically it only excludes the classical case of germs of holomorphic functions. 

The first crucial assumption is the following:  for every pair $h < k$ there exists a constant $
K_{h,k} >0$  such that for every $j$,
\bea
{
 \Vert t^j \Vert_h \leq K_{h,k} \min\{ j^{-1} , R(k,j)\}
 \Vert t^j \Vert_k. \qquad \text{(controlled nuclearity)}
} \label{1g}
\eea
In particular, we are assuming $\Vert t^j \Vert_h \to 0 $ as $h \to 0$, for
every $j$.

Finally, as a second crucial assumption we assume that every $N_j$ is two times continuously differentiable, decreasing, and that
\bea
{
- \frac{ d^2  }{dh^2} \log N_j(h) \geq 
\frac{1}{h}\frac{ d  }{dh} \log N_j(h) } \qquad \text{(subharmonicity)}
\label{3j}
\eea
for all (small enough) $h \in (0, S]$ and for all $j$. 
\end{subequations}

The first three conditions are rather straightforward. We will discuss briefly the meaning of the last two conditions. 

Let us unravel the meaning of condition~\eqref{1g}. It is a ``controlled" nuclearity 
requirement. More precisely, considering the inequality with the $j^{-1}$ term guarantees 
 nuclearity  of the maps $A_h \to A_k$.  Moreover it puts a bound on the nuclearity which we will make crucial use of in section~\ref{L2 arguments} where we have to pass between our norms and $L^2$-norms. Considering the inequality with the 
 $R(C,j)$ term gives us the following inequality:
 \bea
 \label{boundedness}
 \Vert t^j \Vert_h \leq K_{h,k}  
 \Vert t^{j+1} \Vert_k. 
 \eea
This condition is needed in order to prove lemma~\ref{DVR}, i.e., that $A_{\wcheck  h}$ is a DVR. It expresses some form of nuclearity in families. Therefore we call condition~\eqref{1g} ``controlled" nuclearity .

Finally, condition~\eqref{3j} guarantees that we have enough analogues of pseudoconvex domains. In the classical setting Stein submanifolds have a cofinal family of neighborhoods which are Stein. In our setting we need an analogue of this statement. To have such cofinal families we have to impose condition~\eqref{3j}. In a technical sense it gives us enough plurisubharmonic functions to carry out the $L^2$-analysis of H\"ormander in our setting. This is done is section~\ref{L2 arguments}.

{\bf Examples of functions satisfying all conditions.}
\eqn
& & (i) \ \frac{1}{j!} h^{j^\gamma} \, , \ \gamma \geq 1 , \nonumber \\ 
& & (ii) \  j^{-k} h^{j^\gamma}  \, , \ k = 1,2,\ldots , \ 
\gamma > 1, \ h \ \mbox{small enough}, 
\nonumber \\ 
& &(iii)  \  e^{-j^k} h^{j^\gamma}  \, , \ k = 1,2,\ldots , \ \gamma > k,
\nonumber \\ 
& & (iv) \  \frac{1}{j!} e^{ - \gamma j /h}  \, , \ \gamma \geq 1 ,
\nonumber \\ 
& & (v) \  \frac{1}{j!} e^{ (1- \gamma^j )/h}  \, , \ \gamma \geq 2 . \ \ \ 
\eneqn

In the rest of the section we will prove basic facts about the sheaves $\cA_X$. In this section we do not make use of the subharmonicity condition~\eqref{3j}.
We start with:
\begin{lem}
\label{DVR}
The ring $A_{\wcheck  h}$ is a DVR 
\end{lem}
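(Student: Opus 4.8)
The plan is to show that $A_{\wcheck h}=\varinjlim_{k>h}A_k$ is a discrete valuation ring by exhibiting it as a local ring whose maximal ideal is principal, generated by $t$, and then checking it is a domain (which is automatic since it sits inside $\hA=\bC[[t]]$). Concretely, I would first observe that since each $A_k\subset\bC[[t]]$ and the transition maps are the inclusions, $A_{\wcheck h}$ is again a subring of $\bC[[t]]$, hence an integral domain, and the valuation $v(f)=\on{ord}_t(f)$ (the index of the lowest nonzero coefficient in the expansion $f=\sum a_jt^j$) is a candidate valuation. The key points to verify are: (a) $A_{\wcheck h}$ is local with maximal ideal $\fm=\{f : v(f)\geq 1\}=\{f : a_0=0\}$; (b) $\fm=(t)$, i.e.\ every $f\in A_{\wcheck h}$ with $a_0=0$ can be written $f=t\cdot g$ with $g\in A_{\wcheck h}$.

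For (a), I would argue that a series $f=\sum_{j\geq 0}a_jt^j\in A_k$ with $a_0\neq 0$ is invertible in some $A_{k'}$ with $h<k'<k$: the formal inverse $f^{-1}=\sum b_jt^j$ exists in $\bC[[t]]$, and using the normalization~\eqref{22NEW} (so $\Vert t^j\Vert\leq 1$, sequence decreasing) together with the locality condition~\eqref{2c} (so $R(k,j)\to 0$), one estimates the coefficients $b_j$ via the recursion $b_j=-a_0^{-1}\sum_{i=1}^{j}a_ib_{j-i}$ and checks $\sum|b_j|\,\Vert t^j\Vert_{k'}<\infty$ for $k'$ slightly larger than $h$. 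This is the standard proof that $A_k$ is a Banach local ring, applied in the limit; I expect this to go through routinely because the decay of $\Vert t^j\Vert_k$ is faster than exponential. Consequently the non-units form the ideal $\fm$, so $A_{\wcheck h}$ is local.

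For (b), given $f=\sum_{j\geq 1}a_jt^j\in A_k$, the naive candidate is $g=\sum_{j\geq 0}a_{j+1}t^j$, and one must check $g\in A_{\wcheck h}$, i.e.\ $g\in A_{k'}$ for some $h<k'<k$. Here is where condition~\eqref{1g}, in its reformulated shape~\eqref{boundedness}, enters: we have $\Vert t^j\Vert_{k'}\leq K_{k',k}\Vert t^{j+1}\Vert_k$, so
\[
\Vert g\Vert_{k'}=\sum_{j\geq 0}|a_{j+1}|\,\Vert t^j\Vert_{k'}\leq K_{k',k}\sum_{j\geq 0}|a_{j+1}|\,\Vert t^{j+1}\Vert_k\leq K_{k',k}\Vert f\Vert_k<\infty.
\]
Thus $g\in A_{k'}\subset A_{\wcheck h}$ and $f=tg$, so $\fm=(t)$. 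Since $\bigcap_n(t^n)=0$ in $\bC[[t]]$, $A_{\wcheck h}$ is a Noetherian local domain of Krull dimension one with principal maximal ideal, hence a DVR.

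The main obstacle is step (b): it is precisely the point where a purely formal argument fails and one genuinely needs the ``boundedness in families'' shape of the controlled-nuclearity hypothesis~\eqref{1g}/\eqref{boundedness} — dividing by $t$ must not push a convergent series out of \emph{every} level $A_{k'}$ with $k'>h$, and the fact that we are working in the direct limit $A_{\wcheck h}$ (rather than a single $A_h$) is what makes this possible, since we can afford to drop to a slightly smaller radius $k'$. Step (a) is standard Banach-algebra bookkeeping; the domain and dimension claims are immediate from the embedding into $\bC[[t]]$.
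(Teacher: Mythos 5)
Your proposal is correct and follows essentially the same route as the paper: establish locality of $A_{\wcheck h}$ with maximal ideal $\fm=\{a_0=0\}$, then show $\fm=(t)$ by writing $f=tg$ and using the boundedness form~\eqref{boundedness} of condition~\eqref{1g} to land $g$ in $A_{k'}$ for a slightly smaller level $k'$ with $h<k'<k$. The extra detail you supply on the locality step (coefficient recursion for the inverse) and on the passage from $\fm=(t)$ to the DVR conclusion is sound and merely makes explicit what the paper leaves implicit.
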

\begin{proof}
First, the condition~\eqref{2c} implies that the
 rings $A_{h}$ are local ring with maximal ideal $\fm_h=\{\sum_{j=0}^\infty a_jt^j\in A_h\mid a_0=0\}$. Hence, $A_{\widecheck h}$ is a local ring with maximal ideal $\widecheck\fm_h=\{\sum_{j=0}^\infty a_jt^j\in A_{\widecheck h}\mid a_0=0\}$. It suffices to show that $\fm_h=(t)$. Let $f\in \wcheck\fm_h$. Then there is a $k>h$ such that $f\in A_k$ and of course $f(0)=0$. Let us write $f(t) = tg(t)$ with $g(t)=\sum_{j=0}^\infty a_jt^{j-1}\in \widehat A$. We will show that  $g(t)\in A_{l}$ for any $l$ such that $k>l>h$. As $f\in A_k$ we see that
\beq\label{series 1}
\|f\|_k=\|\sum_{j=0}^\infty a_jt^j\|_{_k}\ = \ \sum_{j=0}^\infty |a_j| \|t^j\|_k <\infty\ \ \,.
\eeq
Let us now choose any $l$ such that $h<l<k$. To show that  $\|g(t)\|_{l}<\infty$ we consider:
\begin{equation}
\label{series 2}
\begin{gathered}
\|g(t)\|_{l}= \sum_{j=0}^\infty |a_j| \|t^{j-1}\|_{l}
\end{gathered}
\end{equation}
Comparing the series~\eqref{series 2} to  the series~\eqref{series 1} and using the condition~\eqref{1g} in the form of~\eqref{boundedness} 
we conclude that  $g(t)\in A_{l}$.
\end{proof}

As the spectrum of $A_h$ then consists of the origin only we see by the spectral radius formula that 
\beq
\label{spec radius}
\| t^n \|_h \ = \ (\epsilon_n)^n \qquad \text{where} \qquad \lim_{n\to\infty} \epsilon_n = 0 \,.
\eeq
\begin{rmk}
In the case of our motivating example~\eqref{C norm} we see that the $\epsilon_n$ is essentially proportional to $\frac 1 n$ by the Stirling formula. 
\end{rmk}
Let us recall that we have assumed that $\|t\|_h\leq 1$. Then $\| t^n \|_h \leq \|t^{n-1} \|_h \| t\|_h\leq  \|t^{n-1} \|_h\leq 1$. Now,
\beqn
\epsilon_n = (\| t^n \|_h)^{\frac 1 n}\leq (\| t^{n-1} \|_h)^{\frac 1 n}\ = \  (\| t^{n-1} \|_h)^{\frac 1 {n-1}} (\| t^{n-1} \|_h)^{\frac  {n-1} n}\leq \epsilon_{n-1}\,.
\eeqn
Thus, we conclude that 
\beq
\label{decreasing}
\text{The sequence $\epsilon_n$ is decreasing}\,.
\eeq

Let us now come back to analyze the rings $\cA_X$. We write $\cA_X^h = A_h\hat\otimes \cO_X$ for the sheaf of holomorphic functions with values in $A_h$ and we write $\cA_X^{\wcheck h}= A_{\wcheck  h}\hat\otimes \cO_X$ for the sheaf of holomorphic functions with values in $A_{\wcheck  h}$.

\begin{lem}
The stalks $\cA_{X,x}$ and  $\cA_{X,x}^{\wcheck h}$ are local rings. 
\end{lem}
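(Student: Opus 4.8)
The plan is to identify the units of each stalk explicitly and then observe that the non‑units form an ideal. Recall that $A\hat\otimes\cO_X(U)=\varinjlim_h A_h\hat\otimes\cO_X(U)$, so an element of $\cA_{X,x}$ is a germ which, on a suitable neighbourhood $U$ of $x$ with $\bar U$ compact, is represented by a convergent series $f=\sum_{j\geq 0}f_jt^j$ with $f_j\in\cO_X(U)$ and $\sum_j\|f_j\|_{\bar U}\|t^j\|_h<\infty$ for some $h>0$; for $\cA^{\wcheck h}_{X,x}$ the same holds with $h$ replaced by some $k>h$. There is a unital ring homomorphism $\mathrm{ev}\colon\cA_{X,x}\to\bC$ sending $\sum_j f_jt^j$ to $f_0(x)$, obtained by reducing modulo the maximal ideal (so that $A/(t)\cong\bC$) and then evaluating at $x$ on $\cO_{X,x}$; similarly for $\cA^{\wcheck h}_{X,x}$.

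First I would note that every unit of $\cA_{X,x}$ has nonzero image under $\mathrm{ev}$, since $\mathrm{ev}$ takes units to units of $\bC$. The substance of the lemma is the converse: if $f_0(x)\neq 0$ then $f$ is invertible. For this I would shrink $U$ so that $f_0$ is holomorphic and zero‑free on $\bar U$, write $f=f_0(1+v)$ with $v=\sum_{j\geq 1}(f_j/f_0)t^j$, and note that $f_0$ is already a unit of $\cO_{X,x}\subset\cA_{X,x}$; it then remains to invert $1+v$. Writing $v=t\tilde v$ with $\tilde v=\sum_{j\geq 0}(f_{j+1}/f_0)t^j$, the consequence \eqref{boundedness} of the controlled nuclearity hypothesis \eqref{1g} — used exactly as in the proof of Lemma~\ref{DVR} — shows that $\tilde v$ defines an element of $\cA^k_{X,x}$ for some slightly smaller $k$ (respectively for some $k$ with $h<k$, in the $\wcheck h$ case). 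I would then form the Neumann series $\sum_{n\geq 0}(-1)^n v^n=\sum_{n\geq 0}(-1)^n t^n\tilde v^{\,n}$ and estimate each term, using submultiplicativity of the norm $\sum_j\|h_j\|_{\bar U}\|t^j\|_k$ (which holds because the $A_k$ are Banach algebras) together with the spectral radius identity \eqref{spec radius}, by $\|t^n\tilde v^{\,n}\|\leq\|t^n\|_k\|\tilde v\|^n=(\epsilon_n\|\tilde v\|)^n$; since $\epsilon_n\to 0$ the series converges absolutely, so its sum is a well‑defined element of the stalk inverting $1+v$. Hence $f$ is a unit of $\cA_{X,x}$, and likewise of $\cA^{\wcheck h}_{X,x}$, the inverse in the latter case being assembled inside $\cA^k_{X,x}$ with $k>h$.

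Combining the two directions, the non‑units of $\cA_{X,x}$ are precisely $\ker(\mathrm{ev})$, an ideal (indeed maximal, with residue field $\bC$), so $\cA_{X,x}$ is local; the argument for $\cA^{\wcheck h}_{X,x}$ is identical.

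The only genuine obstacle is the convergence of the Neumann series: the crude bound $\|v^n\|\leq\|v\|^n$ is useless since $\|v\|$ need not be $<1$ and cannot be shrunk by shrinking $U$, so one must exploit that $v$ is divisible by $t$ together with the super‑exponential decay of $\|t^n\|_k$ recorded in \eqref{spec radius} (equivalently the locality condition \eqref{2c}); and dividing out the factor $t$ is precisely what forces the use of \eqref{boundedness} to keep $\tilde v$ inside the ring. Everything else is routine, granted the standard identification, set up earlier, of germs of $A$‑valued holomorphic functions with convergent series $\sum_j f_jt^j$.
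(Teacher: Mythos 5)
Your proof is correct, but it takes a genuinely different route from the paper's. The paper expands $f$ as a power series $\sum_\alpha a_\alpha x^\alpha$ in the manifold coordinates, with coefficients $a_\alpha \in A$: if $f \notin \fm_{A,x}$ then $a_0 = f(x) \in A^* = A - \fm$ (this is where $A$ being local is used), and after dividing by $a_0$ the residual part $\sum_{\alpha \neq 0}a_0^{-1}a_\alpha x^\alpha$ vanishes at $x$, so its $\|\cdot\|_h$-norm on $\bar V$ can be made $<1$ by shrinking $V$; the standard Neumann series in the Banach algebra $\cA_h(\bar V)$ then produces the inverse. You instead expand in powers of $t$, factor out the unit $f_0 \in \cO_{X,x}^*$, and are left to invert $1+v$ where $v$ is divisible by $t$ but whose norm cannot be shrunk; you therefore fall back on the consequence~\eqref{boundedness} of controlled nuclearity~\eqref{1g} to divide out the $t$-factor, and on the spectral-radius identity~\eqref{spec radius} (equivalently locality~\eqref{2c}) to get $\|t^n\|_k = \epsilon_n^n$ with $\epsilon_n \to 0$, which forces the Neumann series $\sum_n(-1)^n t^n\tilde v^n$ to converge regardless of the size of $\|\tilde v\|$. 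Both routes are valid. The paper's argument is more economical and more robust: it uses only that $A$ is a local ring presented as a direct limit of Banach algebras and puts all the work into shrinking the base neighborhood, so neither controlled nuclearity nor the super-exponential decay of $\|t^n\|_h$ enters; in particular it would carry over unchanged to the higher-dimensional regular local $A$ the authors mention. Your argument pushes the convergence entirely into the $t$-direction and hence relies on the specific DVR hypotheses imposed in~\eqref{cond}, even though the statement itself doesn't need them.
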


\begin{proof}
The argument is the same in both cases, so we work with $\cA_{X,x}$. The maximal ideal $\fm_{A,x}\subset \cA_{X,x}$ consists of functions $f\in \cA_{X,x}$ such that $f(x)\in \fm$. To prove that $\cA_{X,x}$ is local we have to show that any $f\notin \fm_{A,x}$ is invertible.   Any element $f\in \cA_{X,x}$ is represented by a series
\begin{equation*}
f\ = \ \sum a_\alpha x^\alpha \qquad a_\alpha \in A\,;
\end{equation*}
here we have replaced $X$ by $\bC^n$ and assumed that $x$ is the origin. This series converges in some neighborhood $U$ of the origin. Let us now restrict $f$ to a smaller neighborhood $V$ such that $\bar V \subset U$. As the ring $A$ is equipped with a direct limit topology, there is an $h$ such that $f|_{\bar V}\in \cA_h(\bar V)$. Note also that if $f\notin \fm_{A,x}$ then the first term $a_0\in A^*=A-\fm$, the units in $A$. Thus, 
\begin{equation*}
a_0^{-1}f\ = \ 1+ \sum a_\alpha x^\alpha\,;
\end{equation*}
Now, as $\sum a_\alpha x^\alpha$ vanishes at the origin, we can, by making the neighborhood $U$ smaller if necessary, assume that $\|\sum a_\alpha x^\alpha\|_h <1$. Thus $a_0^{-1}f$ and hence $f$ is invertible. 

\end{proof}

\begin{rmk}
The sheaves $\cA_X$ and $\cA_X^{\wcheck h}$ are defined in the same way. For emphasis we will make statements in both cases, but of course for the proof we can just think in terms of $\cA_X$. 
\end{rmk}

\begin{thm}
\label{regular local}
The stalks $\cA_{X,x}$ and  $\cA_{X,x}^{\wcheck h}$ are regular local rings. 
\end{thm}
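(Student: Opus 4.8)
The plan is to reduce to the formal completion. We already know from the previous lemma that $\cA_{X,x}$ (and $\cA_{X,x}^{\wcheck h}$) is a Noetherian local ring — Noetherianity should follow from coherence of the structure sheaf, which will be established using the convergence conditions, or can be checked directly on stalks by a Weierstrass-type division argument. Granting that, to prove regularity it suffices to exhibit a regular system of parameters, or equivalently to show that the $\fm$-adic completion $\widehat{\cA_{X,x}}$ is a formal power series ring over $\bC$. Indeed, a Noetherian local ring is regular if and only if its completion is, and $\bC[[t,x_1,\dots,x_n]]$ is regular of dimension $n+1$.

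First I would identify the maximal ideal. Taking $X=\bC^n$ with $x$ the origin, the maximal ideal $\fm_{A,x}\subset\cA_{X,x}$ is generated by $t$ (the uniformizer of the DVR $A$, which sits inside $\cA_{X,x}$ as a constant) together with the coordinate functions $x_1,\dots,x_n$; this is because $\cA_{X,x}/(t,x_1,\dots,x_n)\cong A/\fm\cong\bC$, as an element of $\cA_{X,x}$ lies in $\fm_{A,x}$ exactly when its value at the origin lies in $\fm$, and modding out $x_1,\dots,x_n$ evaluates at the origin. So $\fm_{A,x}$ is generated by $n+1$ elements. Next I would compute the dimension: since $\cA_{X,x}\subset\hat\cA_{X,x}\cong\bC[[t,x_1,\dots,x_n]]$ by the description of $A$ as a convergent subring of $\bC[[t]]$ (so sections of $\cA_X$ are convergent power series in $x$ with coefficients in $A$, hence formal power series in $t,x$), and these completions agree — the $\fm_{A,x}$-adic completion of $\cA_{X,x}$ is $\bC[[t,x_1,\dots,x_n]]$ because modulo $\fm_{A,x}^N$ a function is determined by finitely many Taylor coefficients and conversely any finite Taylor polynomial is realized — the dimension is $n+1$, which matches the number of generators of $\fm_{A,x}$.

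The step I expect to be the main obstacle is verifying cleanly that the $\fm_{A,x}$-adic completion of $\cA_{X,x}$ is exactly $\bC[[t,x_1,\dots,x_n]]$: one must show $\fm_{A,x}^N$ behaves well, i.e.\ that $\fm_{A,x}^N$ consists precisely of functions whose Taylor expansion (jointly in $t$ and $x$) vanishes to order $N$. The inclusion that a high-order-vanishing function lies in a high power of $\fm_{A,x}$ needs the convergence/locality condition~\eqref{2c} — one writes such a function as a convergent combination of monomials $t^a x^\beta$ with $a+|\beta|\geq N$ and checks the tail is summable in some norm $\|\ \|_h$, using $\|t^j\|_h\to 0$ fast. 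Once $\widehat{\cA_{X,x}}\cong\bC[[t,x_1,\dots,x_n]]$ is established, regularity is immediate, and the argument for $\cA_{X,x}^{\wcheck h}$ is word-for-word the same since $A_{\wcheck h}$ is a DVR by Lemma~\ref{DVR} with the same residue field and uniformizer $t$.
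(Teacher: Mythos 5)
Your proposal hinges on Noetherianity being ``already known from the previous lemma,'' but the lemma preceding this theorem in the paper only shows that $\cA_{X,x}$ is a \emph{local} ring, not that it is Noetherian. Noetherianity is in fact the entire content of the hard work in the paper's proof: it is established by first proving a Weierstrass division theorem for $\cA_{X,x}$ (which requires delicate norm estimates after the coordinate change $w_i = x_i + c_i t$, using the decay \eqref{2c} of $\Vert t^j\Vert_h$, the spectral radius formula \eqref{spec radius}, and the fact that the $\epsilon_n$ are decreasing), and then running the usual Weierstrass-preparation induction on the number of base variables. Your fallback ``Noetherianity should follow from coherence of the structure sheaf'' has the logic reversed: the paper's Proposition~\ref{coherence} deduces coherence of $\cA_X$ \emph{from} the Noetherianity (and Zariskian-ness) of the stalks via Schapira's criterion, so coherence cannot be used as input here. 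The other fallback, ``can be checked directly by a Weierstrass-type division argument,'' is exactly what the paper does, but you have not carried it out, and that is where all of the analysis is.

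Granting Noetherianity, your route to regularity — show $\fm_{A,x}=(t,x_1,\dots,x_n)$ and that the $\fm_{A,x}$-adic completion is $\bC[[t,x_1,\dots,x_n]]$, then invoke that a Noetherian local ring is regular iff its completion is — is a legitimate and somewhat more explicit alternative to the paper, which leaves the step from ``Noetherian with Weierstrass division'' to ``regular'' implicit. However, your description of the obstacle understates it. Showing that a germ vanishing to joint order $N$ in $(t,x)$ lies in $\fm_{A,x}^N$ is not chiefly a tail-summability issue: for the terms $a_\alpha x^\alpha$ with $|\alpha|<N$ you must factor $a_\alpha = t^{N-|\alpha|}b_\alpha$ with $b_\alpha$ still in an admissible ring, and this uses the paper's $t$-division lemma (that if $g\in\cA_X^{\wcheck h'}(U)$ with $g(x)\in\fm_{h'}$ for all $x$ then $g=t\tilde g$ with $\tilde g\in\cA_X^{\wcheck h'}(U)$), which in turn costs you a drop in the level $h$; your argument must track this, just as the paper does in its proof of Weierstrass division. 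So the completion approach is workable but borrows the same technical lemmas, and without first delivering Noetherianity the whole argument does not get off the ground.
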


To prove the theorem we work locally so that we can assume that $X=\bC^n$ and we choose local coordinates $x_1, \dots ,x_n$ on $\bC^n$ such that the point $x$ corresponds to the origin. We write $f$ in local coordinates
\begin{equation*}
f \ = \  \sum a_i t^i \qquad a_i\in \cO_{X,x}\,.
\end{equation*}
We prove this statement in the standard manner by first proving an appropriate Weierstrass division theorem. We follow the classical argument as presented in \cite[Chapter 2]{GrRe2}. We will also try to stick to the notation there as closely as possible.  As a first step we argue that by a change of coordinates we can write $f$ so that it is $t$-regular, i.e., that there is a $b$ such that 
\begin{equation*}
a_0(0)= \dots = a_{b-1}(0)=0 \ \  \ \text{and} \ \ \ a_b(0)\neq 0\,.
\end{equation*}
As we work locally, the function $f$ is holomorphic on some closed  polydisk of radius $\rho=(\rho_1,\dots, \rho_n)$ and on that polydisk  $f\in\cA_{X}^h$ for some $h$. Thus we can consider the following norm $\| \ \|_\rho$ on $f$ by 
\begin{equation}
\label{norm}
\| f\|_\rho\ = \ \sum_{i,\alpha}|a_{\alpha,i}| \rho^\alpha \|t^i\|_h \ \ \text{where}\ \ \ a_i = \sum a_{\alpha,i} x_1^{\alpha_1}\dots x_n^{\alpha_n}\ \ \  \rho^\alpha = \rho_1^{\alpha_1}\dots \rho_n^{\alpha_n}\,.
\end{equation}
We now substitute 
\beq
\label{coordinate change}
w_1 = x_1 + c_1 t, \dots , w_n = x_n + c_n t
\eeq
to get the new coordinates $w_1, \dots, w_n, t$. A generic choice of small such $c_i$ will make $f$ regular in $t$. It is perhaps good to note that outside of special cases we can only make $f$ regular in $t$ and not in any of the other variables. 

We now write $f$ in this new set of variables as 
\begin{equation*}
 f = \ \sum_{i,\alpha} b_{\alpha,i}w_1^{\alpha_1}\dots w_n^{\alpha_n} t^i=\sum_{i} b_{i} t^i\,.
\end{equation*}
In these new coordinates we of course will get a different $\rho$ for the radius of convergence. However, it is important that we do not change $h$. 

We perform this substitution one variable at a time and keep the other variables fixed. So, we are reduced in the one variable situation in the base and we write $x=x_1$, $w=w_1= x +ct$ and the $\rho=\rho_1$. Our $f$ can now be written as
\beq
\label{before substitution}
f = \sum_{i,j} a_{i,j} x^jt^i \,.
\eeq
and after the substitution
\beq
\label{after substitution}
\begin{gathered}
 f = \sum_{i,j} a_{i,j} (w+ct)^j t^i = \sum_{i,j} a_{i,j} \sum_{p+q=j} {\binom {p+q} p} w^p c^q t^{i+q} = 
 \\
 =\sum_{i,p,q} a_{i,p+q}  {\binom {p+q} p} w^p c^q t^{i+q}\,.
 \end{gathered}
\eeq
We now consider~\eqref{before substitution} and we obtain
\beqn
\| f\|_\rho = \sum_{i,j} |a_{i,j}| \rho^j \epsilon_i^i<\infty  \,.
\eeqn
We then write
\beqn
 \la_{ij}=|a_{i,j}| \rho^j \epsilon_i^i \ \ \text{and then}\ \ |a_{i,j}| =\la_{ij} \rho^{-j}\epsilon_i^{-i}\ \ \text{and} \ \  \sum_{i,j} \la_{ij}<\infty \ \,.
\eeqn
We furthermore set
\beq
\label{A1}
\la_i = \max\{\la_{ij}\mid j=0, \dots\} \ \ \text{and we still have} \ \  \sum\la_i < \infty\,.
\eeq
This allows us to estimate the norm in~\eqref{after substitution} after the substitution
\beqn
\begin{multlined}
\| f \|_r
 =\sum_{i,p,q} |a_{i,p+q}|  {\binom {p+q} {p}} r^p c^q \|t\|_{h}^{i+q}= 
 \\\sum_{i,p,q}\la_{i,p+q}  {\binom {p+q} {p}}\rho^{-p-q} \epsilon_i^{-i}r^p c^q {\epsilon_{i+q}}^{i+q}\leq
 \\
\sum_{i,q}\la_i  \left({\frac {\epsilon_{i+q}}{\epsilon_i}}\right)^{\!i}\left({\frac {c\epsilon_{i+q}}{\rho}}\right)^{\!q}\sum_p{\binom {p+q} {p}} \left({\frac {r}{\rho}}\right)^{\!p} =
 \\
  =  \sum_{i,q} \la_i   \left({\frac {\epsilon_{i+q}}{\epsilon_i}}\right)^{\!i}\left({\frac {c\epsilon_{i+q}}{\rho}}\right)^{\!q}\left(1-{\frac {r}{\rho}}\right)^{\!-q-1} 
\\
=  \left(1-{\frac {r}{\rho}}\right)^{\!-1} \sum_{i,q} \la_i \left({\frac {\epsilon_{i+q}}{\epsilon_i}}\right)^{\!i}\left({\frac {c\epsilon_{i+q}}{\rho-r}}\right)^{\!q} <\infty
\end{multlined}
 \eeqn
because by~\eqref{decreasing} we have $\epsilon_{i+q}\leq \epsilon_{i}$, by~\eqref{spec radius} we have $\epsilon_{i+q}\to 0$ when $i+q\to \infty$, and by~\eqref{A1} we have $\sum \la_i<\infty$. 

We proceed in this manner one variable at a time keeping the others constant. We thus have reached the following conclusion:
\beq
\label{t-regular}
\begin{gathered}
\text{Given $f\in\cA_{X,x}^h$ we can make it $t$-regular by a coordinate change~\eqref{coordinate change}}\,.
\end{gathered}
\eeq
Let us consider $f\in\cA_{X,x}$. The function $f$ is holomorphic on some closed  polydisk of radius $\rho=(\rho_1,\dots, \rho_n)$ and on that polydisk  $f\in\cA_{X}^h$ for some $h$. We consider the norm $\|\ \|_\rho$ defined in~\eqref{norm}. As before, we write
\begin{equation*}
f \ = \  \sum a_i t^i \qquad a_i\in \cO_{X,x}\,.
\end{equation*}
and then we write 
\begin{equation*}
\hat f \ = \  \sum_0^{b-1} a_i t^i \ \ \  \tilde f \ = \  \sum_b^{\infty} a_i t^{i-b}\ \ \ f=\hat f + \tilde f t^b\,.
\end{equation*}
Note that although, of course, $\tilde f t^b\in \cA_{X,x}^h$, the function $\tilde f$ does not necessarily lie in $\cA_{X,x}^h$. However, it does lie in any $\cA_{X,x}^k$ for $k<h$. We  see this by applying the following general principle inductively:
\begin{lem}
Let $g\in \cA_X^{\wcheck h'}(U)$ and assume that $g(x)\in \fm_{h'}$ for all $x\in U$. Then have $g=t \tilde g$ with  $\tilde g\in \cA_X^{\wcheck h'}(U)$. 
\end{lem}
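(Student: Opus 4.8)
The plan is to follow the proof of Lemma~\ref{DVR} almost verbatim, carrying the holomorphic coefficients along for the ride and reducing to a single Banach level by a compactness argument. Writing $g$ in its unique expansion $g=\sum_{i\ge 0}a_it^i$ with $a_i\in\cO_X(U)$, the hypothesis that $g(x)\in\fm_{h'}$ for every $x\in U$ says precisely that $a_0\equiv 0$ on $U$. One is therefore forced to set $\tilde g:=\sum_{i\ge 0}a_{i+1}t^i$, which is at least a well-defined holomorphic $\bC[[t]]$-valued function on $U$ satisfying $g=t\tilde g$; the whole content of the lemma is that $\tilde g$ actually takes values in the subring, i.e. $\tilde g\in\cA_X^{\wcheck h'}(U)$. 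Since this is a local assertion and $\cA_X^{\wcheck h'}$ is a sheaf, it suffices to prove it on a neighborhood of each point of $U$ and then glue.

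First I would localize. Fix $x_0\in U$, choose coordinates identifying a neighborhood of $x_0$ with a polydisk and $x_0$ with the origin, and pick a closed polydisk $\bar D\subset U$. The restriction $g|_{\bar D}$ is a continuous map from the compact set $\bar D$ into $A_{\wcheck h'}=\varinjlim_{k>h'}A_k$, a direct limit of Banach spaces along nuclear linking maps; hence its compact image is contained and bounded in some $A_k$ with $k>h'$, and by the usual Cauchy-estimate argument $g\in\cA_X^k(D)$ with $M:=\sup_{\bar D}\|g\|_k<\infty$, where $D$ is the open polydisk. By~\eqref{1fNEW} we then have $\|g(x)\|_k=\sum_{i\ge 1}|a_i(x)|\,\|t^i\|_k$ for $x\in\bar D$.

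The crux is now the single inequality furnished by the controlled-nuclearity assumption~\eqref{1g} in the form~\eqref{boundedness}. Fix $l$ with $h'<l<k$. For $x\in\bar D$,
\[
\|\tilde g(x)\|_l=\sum_{i\ge 0}|a_{i+1}(x)|\,\|t^i\|_l\le K_{l,k}\sum_{i\ge 0}|a_{i+1}(x)|\,\|t^{i+1}\|_k\le K_{l,k}\,\|g(x)\|_k\le K_{l,k}M,
\]
the first inequality being~\eqref{boundedness} applied with $j=i$, which is legitimate since $l<k$. Thus $\tilde g$ takes values in $A_l$ on $D$ and is bounded there. Comparing these same tails with the tails of $g$ (whose partial sums converge to $g$ in $\cA_X^k(D)$, as one checks using~\eqref{1fNEW}) shows that the partial sums of $\tilde g$ form a Cauchy sequence in $\cA_X^l(D)$, so $\tilde g|_D\in\cA_X^l(D)\subseteq\cA_X^{\wcheck h'}(D)$, the inclusion holding because $l>h'$. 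Letting $x_0$ range over $U$ and invoking the sheaf axiom gives $\tilde g\in\cA_X^{\wcheck h'}(U)$, and $g=t\tilde g$ by construction.

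Nothing here is deep once~\eqref{boundedness} is in hand: the displayed estimate is the entire point, exactly as in Lemma~\ref{DVR}. The only steps requiring a little care, and hence the main (and rather minor) obstacles, are the reduction from the direct-limit level $\wcheck h'$ down to a fixed Banach level $A_k$ over a compact polydisk, and checking that the formally constructed $\tilde g$ is genuinely holomorphic with values in $A_l$ and not merely in $\bC[[t]]$ — both of which are settled by the local boundedness in $A_l$ established above together with the uniform smallness of the tails.
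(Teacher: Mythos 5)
Your proof is correct but genuinely different from the paper's. The paper treats the lemma as a quick consequence of functional analysis: by Lemma~\ref{DVR} the multiplication map $t\colon A_{\wcheck h'}\to\wcheck\fm_{h'}$ is a continuous bijection, and since both spaces are DNF the open mapping theorem makes it a topological isomorphism; composing the holomorphic $\wcheck\fm_{h'}$-valued function $g$ with the continuous inverse $t^{-1}$ then immediately produces $\tilde g\in\cA_X^{\wcheck h'}(U)$. You instead re-run the explicit coefficient estimate that proved Lemma~\ref{DVR}, carrying the holomorphic functions $a_i$ along and localizing to a closed polydisk to pin down a single Banach level $A_k$, then use the controlled-nuclearity inequality~\eqref{boundedness} to drop to a lower level $A_l$ with $h'<l<k$. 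Both arguments ultimately rest on the same ingredient --- condition~\eqref{1g} in the form~\eqref{boundedness} --- but the paper hides it behind the open mapping theorem and Lemma~\ref{DVR}, while you redo it by hand. The paper's route is shorter and more conceptual once the abstract machinery is available; yours is more self-contained and makes the role of~\eqref{boundedness} completely transparent, at the cost of repeating the regularity/localization bookkeeping (factoring a continuous map from a compact set into a DNF space through one Banach step, Dini/uniform tail estimates) that the paper avoids by working with the continuous linear map $t^{-1}$ directly.
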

\begin{proof}
Let us then consider the map $t:A_{\wcheck h'} \to A_{\wcheck h'}$ given by multiplication by $t$. By our hypotheses the map $t: A_{\wcheck h'} \to\wcheck\fm_{h'}$ is continuous and a bijection. As both $A_{\wcheck h'}$ and $\wcheck\fm_{h'}$ are DNF, the open mapping theorem applies and hence $t: A_{\wcheck h'} \to \fm_{h'}$ is an isomorphism. Thus, $t^{-1}: \wcheck\fm_{h'} \to A_{\wcheck h'}$ is a well defined continuous map and so we can write $g=t \tilde g$  where $\tilde g\in \cA_X^{\wcheck h'}(U)$.
\end{proof}
We apply this lemma after first passing to $\cA_{X,x}^{\wcheck h'}$ with $k<h'<h$ and repeat the process to extract the $b$ copies of $t$ from $\tilde f t^b$. 
We now also have
\beq
\label{estimate}
\|\hat f\|_\rho \leq \| f\|_\rho \qquad \| \tilde f\|_\rho \leq \| f\|_\rho \|t^b\|^{-1}\,.
\eeq
Recall that any statements like this we make about norms are valid as long as both sides are defined. In particular, the latter inequality holds for any $t$-norm $k$ where $k<h$. 

We are now ready to state and prove the Weierstrass division theorem:
\begin{prop}
If $g\in \cA_{X,x}$ has order $b$ in $t$ then for any $f\in \cA_{X,x}$ there exists a $q\in \cA_{X,x}$ and an $r\in\cO_{X,x}[t]$ with degree of $r$ in $t$ less than $b$ such that $f=qg+r$. 
\end{prop}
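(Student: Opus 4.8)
The plan is to mimic the classical proof of the Weierstrass division theorem (as in \cite[Chapter 2]{GrRe2}), working on a suitable polydisk and controlling everything with the norms $\|\ \|_\rho$ defined in~\eqref{norm}. First I would reduce to the situation where $g$ is $t$-regular of order $b$: by~\eqref{t-regular} a generic coordinate change~\eqref{coordinate change} makes $g$ $t$-regular, and such a change does not affect the $t$-norm $h$, so it suffices to prove division for $t$-regular $g$. Having done this, by the factorization discussed before~\eqref{estimate} we may even write $g = \hat g + \tilde g\, t^b$ where $\tilde g$ is a unit in some $\cA_{X,x}^{\wcheck k}$ (its leading term $a_b$ is a unit in $\cO_{X,x}$, and $\tilde g(x)\notin\fm_{k}$ after shrinking the polydisk), while $\hat g = \sum_{i<b} a_i t^i$ has all coefficients vanishing at $x$.

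Next I would set up the standard operator. Write, for $\phi \in \cA_{X,x}$ (convergent on a fixed polydisk of polyradius $\rho$, with values in $A_h$), $\phi = \sigma(\phi) + \tau(\phi) t^b$ where $\sigma(\phi) = \sum_{i<b} a_i(\phi)\, t^i \in \cO_{X,x}[t]$ collects the low-order terms and $\tau(\phi)t^b$ the rest; by~\eqref{estimate} these projections satisfy $\|\sigma(\phi)\|_\rho \le \|\phi\|_\rho$ and $\|\tau(\phi)\|_\rho \le \|\phi\|_\rho \|t^b\|_k^{-1}$ for any $k<h$ (the loss of $t$-level is harmless since $A = \varinjlim A_h$). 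One then seeks $q$ solving $f = qg + r$ with $r = \sigma(f - qg)$; substituting $g = \hat g + \tilde g\, t^b$ and writing $q = \tilde g^{-1}\, q'$, the condition becomes a fixed-point equation $q' = \tau(f) - \tau(\tilde g^{-1}\hat g\, q')$, i.e. $q' = \tau(f) + T(q')$ where $T(\psi) = -\tau(\tilde g^{-1}\hat g\, \psi)$. The key estimate is that $T$ is a contraction: since $\hat g$ has all coefficients in $\fm_{A,x}$, on a small enough polydisk we have $\|\tilde g^{-1}\hat g\|_\rho$ as small as we like, while the projection $\tau$ costs only a factor $\|t^b\|^{-1}$, a fixed constant; choosing $\rho$ small makes the product $<1$ in the Banach algebra $\cA_{X}^{k}(\bar V)$. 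Then $q' = \sum_{n\ge 0} T^n(\tau(f))$ converges, $q = \tilde g^{-1}q' \in \cA_{X,x}$, and $r = f - qg = \sigma(f-qg) \in \cO_{X,x}[t]$ has $\deg_t r < b$ by construction.

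The main obstacle I anticipate is the bookkeeping of $t$-levels: each application of the projection $\tau$ nominally forces a decrease $h \rightsquigarrow k$, so a naive iteration would push the level to $0$; the fix is that the contraction estimate only needs to hold for a \emph{single} fixed pair $k<h'<h$ (one drops once from $h$ to $h'$, then iterates the contraction $T\colon \cA_X^{h'}(\bar V) \to \cA_X^{h'}(\bar V)$ entirely at level $h'$, using that multiplication and the $\sigma/\tau$ splitting are bounded operators on the \emph{Banach} algebra $\cA_X^{h'}(\bar V)$ with norm $\|\ \|_\rho$). A secondary point requiring care is that $\tilde g^{-1}$ exists in $\cA_X^{\wcheck k}(\bar V)$ only after shrinking the polydisk so that $\tilde g$ stays away from $\fm_k$ uniformly — this is exactly the local-ring argument already used above for invertibility of elements not in $\fm_{A,x}$, applied to the section $\tilde g$ rather than to a stalk — so both shrinkings (for invertibility of $\tilde g$ and for contractivity of $T$) can be arranged simultaneously. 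Once the fixed point is produced, uniqueness of $q$ and $r$ (if needed) follows from the usual argument: a relation $0 = qg + r$ with $\deg_t r < b$ forces $r = \sigma(-qg) = -\tau(\tilde g^{-1}\hat g\, (\tilde g q))\cdot(\dots)$, i.e. $\tilde g q$ is a fixed point of $T$ with zero source term, hence $0$ by the contraction.
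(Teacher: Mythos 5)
Your overall plan (reduce to $t$-regular $g$, set up the classical contraction iteration from \cite[Ch.~2]{GrRe2}, track everything with the norms $\Vert\cdot\Vert_\rho$) is the same as the paper's. The problem is in the step you flag as the ``main obstacle'' and then claim to have fixed.

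You assert that after one drop from $h$ to a fixed $h'<h$, the operator $T(\psi)=-\tau(\tilde g^{-1}\hat g\,\psi)$ maps $\cA_X^{h'}(\bar V)$ to itself because ``multiplication and the $\sigma/\tau$ splitting are bounded operators on the Banach algebra $\cA_X^{h'}(\bar V)$.'' This is false for $\tau$. Writing $\tau(f)=\sum_{i\geq b}a_i t^{\,i-b}$, boundedness of $\tau$ on a single level $h'$ would require a uniform bound $\Vert t^{\,i-b}\Vert_{h'}\leq C\Vert t^i\Vert_{h'}$, i.e.\ a uniform lower bound on
\[
\frac{\Vert t^i\Vert_{h'}}{\Vert t^{\,i-b}\Vert_{h'}}=\prod_{j=i-b}^{i-1}R(h',j),
\]
and by the locality condition~\eqref{2c} this product tends to $0$ as $i\to\infty$. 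So $\tau$ does \emph{not} preserve the Banach algebra at any single level; it only maps $\cA_X^{h'}$ into $\cA_X^k$ for $k<h'$, with the bound coming from the controlled-nuclearity condition~\eqref{1g}/\eqref{boundedness}. The paper states this explicitly: the estimate~\eqref{estimate} holds for any $k<h$ but ``not necessarily for $h$ itself.'' Consequently your $T$ is not a self-map of any fixed $\cA_X^{h'}(\bar V)$ and the Banach fixed-point argument as you set it up does not apply. (The ``uniqueness'' argument at the end inherits the same flaw.)

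The paper's actual resolution of this is the genuine content of the proof: one chooses a strictly decreasing sequence of levels $h>k_0>k_1>\cdots>k_j>\cdots>k$ bounded below, and each iterate $v_j\mapsto\tilde v_j$ drops from level $k_{j-1}$ to $k_j$. The geometric bound $\Vert v_{j+1}\Vert_\rho\leq\epsilon\Vert v_j\Vert_\rho$ still holds across these level drops, and the resulting series $v=\sum v_j$ converges at any level $k'$ with $k<k'<h$. Your write-up correctly identifies where the difficulty lives but replaces the necessary chain-of-levels bookkeeping by a claim that is contradicted by \eqref{2c}, so the proof as proposed has a gap.
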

\begin{proof}
To prove this result, we proceed just as in the classical case. The argument in \cite[Chapter 2, \S1]{GrRe2} can be adopted to our situation and we briefly indicate the necessary changes trying to stick as close to the notation there as possible. First of all, we can view the stalk $\cA_{X,x}$ as a direct limit of Banach algebras $B^h_\rho$, where $\rho=(r_1, \dots, r_n)$ is a sequence of positive real numbers and the norms are given by
\begin{equation*}
\text{For} \  \ f=\sum b_\al x^\al  \ \ \text{we set}  \  \ \|f\|_{h,\rho} \ = \ \sum \|b_\al\|_h\, r_1^{\al_1}\dots r_n^{\al_n}
\end{equation*}
just as in \cite[Chapter 2, \S1]{GrRe2} for the case $A_k=\bC$ and as we have done above. Note that because $g$ is $b$-regular in $t$ then  $\tilde g$ is invertible. 
We first note that we can choose $\rho$ sufficiently small so that $g,\hat g, \tilde g$, and $\tilde g^{-1}$ lie in $B^h_\rho$ for some $h$. We will further adjust $h$ and $\rho$ so that we also have $f\in B^h_\rho$.
Note that
\beqn
\|t^b-g\tilde g^{-1}\| = \|\hat g \tilde g^{-1}\| \leq \|\hat g\|\| \tilde g^{-1}\| 
\eeqn
Now, we have for $g = \sum a_i t^i$ that $a_0(0)=\dots =a_{b-1}(0)=0$ and hence  $\|\hat g\|$ can be made arbitrarily small by shrinking $\rho$. As $\tilde g^{-1}$ is invertible, its norm can be bounded away from zero. Thus, for any $\epsilon>0$ we can, by shrinking $\rho$, arrange things so that 
\beqn
\|t^b-g\tilde g^{-1}\| <\epsilon \|t^b\| \,.
\eeqn
We now set
\beqn
v_0 = f, \dots , v_{j+1} = (t^b-g\tilde g^{-1})\tilde v_j = -\hat g \tilde g^{-1}\tilde v_j, \dots 
\eeqn
By~\eqref{estimate} we get that 
\beqn
\|\tilde v_j\|_\rho \leq \| v_j\|_\rho \|t^b\|^{-1}\qquad \text{and then}\qquad \|v_{j+1}\|\leq\epsilon \|v_j\|_\rho\,.
\eeqn
Recall that for a given $h$ the estimate~\eqref{estimate} holds for any $k<h$ but not necessarily for $h$ itself. We make use of the estimate every time we form a $\tilde v_j$ but for any $k$ we can choose $h>k_0>k_1>\dots > k_j >\dots >k$. When we form $\tilde v_j$ we pass from $k_{j-1}$ to $k_j$. Thus, the estimates above holds for any $k<h$. 
We now set, for $k<k'<h$
\beqn
v = \sum_0^\infty v_j\in B_\rho^{k'}\ \ q=\tilde g^{-1}\tilde v\in B_\rho^k\ \ r=\hat v\in B_\rho^k\,.
\eeqn
Then
\beqn
f = \sum _0^\infty(v_j-v_{j+1}) = \sum_0^\infty(g\tilde g^{-1}\tilde v_j + \tilde v_j) = qg+r\,.
\eeqn
\end{proof}

To prove that $\cA_{X,x}$ is Noetherian is clearly suffices to show that:
\begin{equation*}
\text{For any $f\in \cA_{X,x}$ the ring $\cA_{X,x}/\cA_{X,x}f$ is Noetherian}\,.
\end{equation*}

We perform a change of coordinates as in~\eqref{coordinate change} so that by~\eqref{t-regular} we can assume that $f$ is $t$-regular.

The Weierstrass preparation theorem follows from the Weierstrass division theorem formally and it implies, with our hypotheses on $f$, that we have
\begin{equation*}
f=ug \ \ \text{$u\in\cA_{X,x}$ is a unit}\ \  g\in\cA_{X',x}[x_n]\,.
\end{equation*}
Therefore
\begin{equation*}
\cA_{X,x}/\cA_{X,x}f\  \cong\ \cA_{X,x}/\cA_{X,x}g\  \cong \ \cA_{X',x}[x_n]/\cA_{X',x}[x_n]g\,.
\end{equation*}
But, by induction, we conclude that $\cA_{X',x}[x_n]$ is Noetherian and hence so is $\cA_{X,x}/\cA_{X,x}f$.
To see the second isomorphism we apply the division theorem again.

\begin{prop}
\label{faithful flatness}
Let $k<h$ then $\cA_{X,x}^{\hat k}$ is faithfully flat over $\cA_{X,x}^{\hat h}$\,.
\end{prop}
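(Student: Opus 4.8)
Write $R=\cA_{X,x}^{\hat h}$, $S=\cA_{X,x}^{\hat k}$, and let $\varphi\colon R\to S$ be the natural homomorphism (induced by the inclusion $A_{\widehat h}\subseteq A_{\widehat k}$, which holds because $k<h$). The plan is to reduce faithful flatness of $\varphi$ to a statement about $\fm$-adic completions. First I would observe that the constructions of this section carry over to $A_{\widehat h}$ with no essential change: exactly as in Lemma~\ref{DVR}, using condition~\eqref{1g} in the form~\eqref{boundedness}, the ring $A_{\widehat h}$ is a DVR with uniformizer $t$; and then the Weierstrass division and preparation arguments behind Theorem~\ref{regular local} show that $R$ and $S$ are Noetherian regular local rings of dimension $n+1$, where $n=\dim_\bC X$. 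In local coordinates $x_1,\dots,x_n$ at $x$ their maximal ideals are $\fm_R=(x_1,\dots,x_n,t)$ and $\fm_S=(x_1,\dots,x_n,t)$, so $\varphi$ is a \emph{local} homomorphism with $\fm_R S=\fm_S$.

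Granting this, the easy point is that a flat local homomorphism of local rings is automatically faithfully flat (here $\fm_R S\subseteq\fm_S\subsetneq S$), so it remains only to prove that $S$ is flat over $R$. Since $\fm_R S=\fm_S$, the map $\varphi$ induces isomorphisms $R/\fm_R^m\cong S/\fm_S^m$ for every $m$ (both sides identify with the polynomials of degree $<m$ in $x_1,\dots,x_n,t$), so the $\fm$-adic completions agree, compatibly with $\varphi$:
\[
\widehat R\ \cong\ \widehat S\ \cong\ \bC[[x_1,\dots,x_n,t]]\ =:\ T .
\]
As $R$ and $S$ are Noetherian local, $R\to T$ is flat and $S\to T$ is faithfully flat. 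A routine descent argument then finishes the proof: given an injection $M\hookrightarrow M'$ of $R$-modules, tensoring first with $S$ over $R$ and then with $T$ over $S$ produces the map $M\otimes_R T\to M'\otimes_R T$, which is injective because $R\to T$ is flat; faithful flatness of $S\to T$ then forces the intermediate map $M\otimes_R S\to M'\otimes_R S$ to have been injective already. Hence $\varphi$ is flat, and therefore faithfully flat.

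An alternative route for the flatness step bypasses completions: by the local criterion of flatness (applicable since $R$ and $S$ are Noetherian local, $\varphi$ is local, and $S$ is a finite module over itself), it suffices to check $\operatorname{Tor}_1^R(\bC,S)=0$; resolving $\bC=R/\fm_R$ by the Koszul complex on $x_1,\dots,x_n,t$ --- a free resolution since $R$ is regular --- identifies $\operatorname{Tor}_i^R(\bC,S)$ with the Koszul homology $H_i\bigl(K_\bullet(x_1,\dots,x_n,t;S)\bigr)$, and this vanishes for $i\ge 1$ because $x_1,\dots,x_n,t$ is again a regular sequence in the regular local ring $S$.

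I expect the genuine work to lie entirely in the first paragraph: confirming that the Weierstrass machinery of Theorem~\ref{regular local} applies to the Fr\'echet-valued stalks $\cA_{X,x}^{\hat h}$ (so that $R$ and $S$ are Noetherian regular local rings), and that their $\fm$-adic completion is indeed $\bC[[x_1,\dots,x_n,t]]$ with the identifications compatible with $\varphi$. Once that is in place, the homological part is formal.
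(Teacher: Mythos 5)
Your main route is essentially the paper's own proof, just spelled out in more detail. The paper invokes the appendix of Serre's GAGA and simply states that $\cA_{X,x}^{\hat h}$ and $\cA_{X,x}^{\hat k}$ are subrings of the same formal completion, that the completion of a regular local ring is faithfully flat over it, and that this gives the result; the descent step you write out ("$M\otimes_R T \to M'\otimes_R T$ injective, $S\to T$ faithfully flat, hence $M\otimes_R S \to M'\otimes_R S$ injective") is precisely the implicit content of the paper's ``this implies.'' Your observation that $\varphi$ is a local homomorphism with $\fm_R S = \fm_S$, hence $R/\fm_R^m\cong S/\fm_S^m$ and the completions agree, is a correct and useful unpacking of the paper's assertion that the formal completions coincide. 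Your alternative route via the local criterion of flatness and the Koszul complex on $x_1,\dots,x_n,t$ is a genuinely different and also valid argument; it bypasses completions entirely at the cost of invoking the local criterion, while the paper's route only needs the single standard fact that Noetherian local $\to$ completion is faithfully flat. You are also right that the real substance behind both proofs is the preliminary verification that the stalks in question are Noetherian regular local rings; the paper establishes this (Theorem~\ref{regular local}) for $\cA_{X,x}$ and $\cA_{X,x}^{\wcheck h}$ and implicitly relies on the same Weierstrass argument for the Fr\'echet ($\widehat{\phantom{h}}$) variants appearing in this proposition, a point worth flagging as you did.
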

\begin{proof}
We make use of the appendix of \cite{Se} where the notion of (faithful) flatness was originally introduced. The local ring is  $\cA_{X,x}^{\hat h}$ is a subring of  $\cA_{X,x}^{\hat k}$ and their formal completions coincide. Now we use the fact that the formal completion $\hat B$ of a regular local ring $B$ is faithfully flat over $B$. This implies that $\cA_{X,x}^{\hat k}$ is faithfully flat over $\cA_{X,x}^{\hat h}$. 
\end{proof}

\begin{prop}
\label{coherence}
The sheaf $\cA_X$ is coherent. 
\end{prop}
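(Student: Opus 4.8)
The plan is to verify coherence via the standard criterion for a sheaf of rings: $\cA_X$ is coherent exactly when, for every open $U$ and every tuple $f_1,\dots,f_p\in\cA_X(U)$, the relation sheaf $\cR(f_1,\dots,f_p)=\ker\bigl(\cA_X^p|_U\xrightarrow{(f_i)}\cA_X|_U\bigr)$ is of finite type (the requirement that $\cA_X$ itself be of finite type is trivial, $\cA_X$ being generated by $1$). As this is local, I would fix $x\in U$, take $X=\bC^n$ with $x$ the origin, and transcribe the classical Oka coherence argument (as in \cite[Chapter 2]{GrRe2}), feeding in the Weierstrass division theorem proved above in place of the classical one. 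The structural point is that our division theorem delivers remainders that are \emph{polynomials in $t$ with coefficients in $\cO_{X,x}=\cO_{\bC^n,0}$}: the single ``$t$-direction'' gets stripped off in one step, and the argument bottoms out at the classical coherence of $\cO_{\bC^n}$, with no induction on $n$ required.

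First I would dispose of the case in which every $f_i$ vanishes as a germ at $x$, for then $\cR=\cA_X^p$ on a neighborhood. Otherwise, after reordering so that $f_{p,x}\neq0$, I would apply the coordinate change~\eqref{coordinate change} --- an automorphism of the stalk $\cA_{X,x}$, and of $\cA_X$ near $x$, by the norm estimates in the proof of Theorem~\ref{regular local} --- to make $f_p$ $t$-regular via~\eqref{t-regular}, and then Weierstrass preparation to reduce to the case where $f_p=\omega$ is a distinguished polynomial in $t$, monic of $t$-degree $b$, with coefficients in $\cO_{X,x}$. Weierstrass-dividing the remaining $f_j$ by $\omega$ gives $f_j=c_j\omega+g_j$ with $g_j\in\cO_{X,x}[t]$ of $t$-degree $<b$.

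Now comes the heart of Oka's argument. Given a germ of a relation $(q_1,\dots,q_p)$, Weierstrass-divide $q_j=a_j\omega+b_j$ for $j<p$ and subtract $\sum_{j<p}a_j\kappa_j$, where $\kappa_j:=\omega\,e_j-f_j\,e_p$ is the evident Koszul relation; this produces a relation of $(f_1,\dots,f_{p-1},\omega)$ whose first $p-1$ entries $b_j$ are polynomials in $t$ of degree $<b$. Rewriting it as a relation of $(g_1,\dots,g_{p-1},\omega)$ --- which only alters the last coordinate, by $\sum_{j<p}b_jc_j$ --- and observing, via the uniqueness in Weierstrass division together with monic polynomial long division, that the new last coordinate is also a polynomial in $t$ of degree $<b$, one obtains a relation of $(g_1,\dots,g_{p-1},\omega)$ with all entries polynomials of $t$-degree $<b$. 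Expanding each entry in the powers $1,t,\dots,t^{b-1}$, these correspond bijectively and $\cO_{X,x}$-linearly to the $\cO_{X,x}$-linear relations among the finite family $\{\,t^kg_j,\ t^k\omega : j<p,\ 0\le k<b\,\}$ inside the free module $\cO_{X,x}[t]_{\le 2b-1}\cong\cO_{X,x}^{2b}$. By the classical Oka coherence theorem for $\cO_{\bC^n}$ this relation module is finitely generated over $\cO_{X,x}$; transporting a finite generating set back through the bijection and adjoining the finitely many $\kappa_j$ yields a finite generating set of $\cR(f_1,\dots,f_p)_x$ over $\cA_{X,x}$. A routine spreading-out argument --- the coordinate change, $\omega$, the $g_j$, and the classical coherence of $\cO_{\bC^n}$ all being available on a full neighborhood of $x$ --- then shows these sections generate $\cR$ near $x$, so $\cR$ is of finite type.

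The main obstacle is bookkeeping rather than a new idea: one must keep the $t$-degrees under control through the two rounds of Weierstrass division and the Koszul subtraction, so that the final reduction genuinely lands in a free $\cO_{X,x}$-module of finite rank where classical coherence can be invoked; and one must verify that Weierstrass preparation and division, and the coordinate change, can be carried out uniformly on a neighborhood of $x$, so that stalk-wise finite generation upgrades to finite type of the sheaf. Both are faithful analogues of steps in the classical Oka proof, now available thanks to the Weierstrass division theorem, the $t$-regularization~\eqref{t-regular}, and the regular-local --- hence integral --- property of the stalks established above.
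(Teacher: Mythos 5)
Your argument is a faithful transcription of the classical Oka coherence proof, but the paper's own argument is completely different and much shorter: it observes that $\cA_X$ carries a natural ring filtration (by powers of $t$), that $\operatorname{gr}\cA_X\cong\cO_X[t]$ is coherent by the classical Oka theorem, that the stalks $\cA_{X,x}$ are Noetherian local rings (hence Zariskian for this filtration), and then cites the abstract coherence criterion for Zariskian filtered rings from Schapira \cite[Ch.~II, Prop.~1.4.1]{Sch}. So where the paper spends one paragraph invoking general machinery, you rerun Oka's relation-sheaf reduction by hand. The stalk-level part of your argument --- Koszul subtraction, degree bookkeeping through the two rounds of Weierstrass division, reduction to the finitely generated $\cO_{X,x}$-module of relations inside $\cO_{X,x}^{2b}$, and identification of the extra quotient $q_p'$ as a polynomial of degree $<b$ via uniqueness of division compared against monic long division --- is sound.

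There is, however, a real issue hiding in what you call the ``routine spreading-out argument.'' The Weierstrass division theorem proved in the paper applies to a germ $g\in\cA_{X,x}$ that \emph{has order $b$ in $t$ at $x$}. Upgrading ``the $s_i$ generate $\cR_x$'' to ``the $s_i$ generate $\cR_y$ for $y$ near $x$'' is not automatic --- it cannot be deduced from a support-is-closed lemma because finite type of $\cR$ is precisely what is being proven --- so one must rerun the Koszul/division reduction at each nearby $y$. But at $y\ne x$ the distinguished polynomial $\omega$ is merely a monic degree-$b$ polynomial in $t$ whose constant coefficient need not vanish, so $\omega$ may have $t$-order $0$ at $y$ and the paper's division theorem does not apply as stated. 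One would need a ``division by a monic polynomial'' version (as in the classical treatments), which in this setting requires revisiting the norm estimates in the proof of the division theorem; there the iteration hinges on $\Vert\hat g\Vert$ being made small by shrinking the base polydisk, which uses $\hat g(0)=0$ and so is tied to the distinguished case. This extension is plausible but not free, and it is exactly the kind of pointwise technicality the filtered-ring criterion the paper invokes is built to absorb.
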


\begin{proof}
Note that $\cA_X$ is naturally a filtered ring and the associated graded $\operatorname{gr} \cA_X \cong \cO_X[t]$ is a coherent. Now, we just checked that $\cA_{X,x}$ is Noetherian and it is Zariskian, because it is a local ring. Then $\cA_X$  is coherent by \cite[Chapter II, proposition 1.4.1]{Sch}.
\end{proof}

\section{The case of varying levels}
\label{varying levels}

In our proof of the Cartan theorems we have to allow the level $h$ to vary. We introduce this generality already here
as the proofs are the same as in the constant case. 

Given a continuous function $h: X \to \bR_+$, we can consider the corresponding sheaf $\cA^{h}_X$ on $X$ defined as follows:
\begin{equation}
    \cA^{h}_X(U) = \{f: U \to \hat A=\bC[[t]] \mid f \ \text{holomorphic and $f(x)\in A_{h(x)}$ for all $x\in U$}\}\,.
\label{4.1}
\end{equation}
We also have the corresponding sheaf $\cA^{\wcheck h}_X$ obtained as a direct limit
\begin{equation}
\label{4.2}
\cA^{\wcheck h}_X \ = \ \varinjlim_{h_1 > h} \cA^{h_1}_X\,;
\end{equation}
here $h_1 > h$ means that $h_1(x) > h(x)$ for all $x$.

Let us consider the stalks of the sheaves $\cA^{\wcheck h}_X$. We claim that 
\begin{equation}
\label{stalks}
\cA^{\wcheck {h}}_{X,x} = \varinjlim_{U\ni x} \cA^{\wcheck h}_X(U) = \cA^{
\wcheck {h(x)}}_{X,x}\,.  
\end{equation}
To see this, note that 
\begin{equation*}
\begin{gathered}
\text{For any $h_1>h$ there exists a neighborhood $U_1$ of $x$}
\\
\text{such that $h_1(y)>h(x)$ for any $y\in U_1$}\,.
\end{gathered}
\end{equation*}
Now, any  $s\in\varinjlim_{U\ni x} \cA^{\wcheck h}_X(U)$ is given by  $s\in\cA^{\wcheck h}_X(U)$ for some $U\ni x$. Restricting $s$ further to a neighborhood $V\ni x$ with compact closure we conclude that there is an $h_1>h$ such that $s\in\cA^{h_1}_X(V)$. If necessary we shrink $V$ further so that it is contained in $U_1$ given by~\eqref{stalks}. Thus, $s$ gives rise to an element in $\cA^{h_1(x)}_X(V)$ and hence an element in $\cA^{
\wcheck {h(x)} 
}_{X,x}$. This gives a map $\cA^{\wcheck {h}}_{X,x}\to \cA^{
\wcheck {h(x)} 
}_{X,x}$. It is now easy to see that this map is an isomorphism.

\begin{rmk}
Note that we do not have an analogous statement for $\cA^{h}_X$. 
\end{rmk}

We have
\begin{prop}
\label{varying coherence}
The sheaf $\cA^{\wcheck h}_X$ is coherent. 
\end{prop}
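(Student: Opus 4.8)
The plan is to reduce to the constant-level situation via the stalk identification \eqref{stalks} and then apply the same filtered-ring criterion used in the proof of Proposition~\ref{coherence}. Since coherence is a local property, it suffices to verify, near an arbitrary point $x_0\in X$, the hypotheses of \cite[Chapter II, Proposition 1.4.1]{Sch}: that $\cA^{\wcheck h}_X$ is a filtered sheaf of rings with coherent associated graded, and that its stalks are Noetherian and Zariskian.

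For the stalks I would simply invoke \eqref{stalks}: $\cA^{\wcheck h}_{X,x}\cong\cA^{\wcheck{h(x)}}_{X,x}$, which is a regular local ring by Theorem~\ref{regular local}. In particular every stalk is Noetherian, and the $t$-adic filtration on it is Zariskian because $t$ lies in its maximal ideal.

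Next I would equip $\cA^{\wcheck h}_X$ with the $t$-adic filtration $\cA^{\wcheck h}_X\supset t\cA^{\wcheck h}_X\supset t^2\cA^{\wcheck h}_X\supset\cdots$ and check that $\operatorname{gr}\cA^{\wcheck h}_X\cong\cO_X[t]$, exactly as for $\cA_X$. The map $\cO_X\to t^p\cA^{\wcheck h}_X/t^{p+1}\cA^{\wcheck h}_X$, $a\mapsto[at^p]$, is surjective because a local section of $t^p\cA^{\wcheck h}_X$ can be written $t^pg$ with $g=\sum_i a_it^i$, $a_i\in\cO_X$, and it is injective because the relation $at^p\in t^{p+1}\cA^{\wcheck h}_X$ evaluated at each point forces $a=0$. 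The only ingredient that uses the varying-level setup is that a local section $g$ with $g(x)\in\fm$ for all $x$ is divisible by $t$ inside $\cA^{\wcheck h}_X$ — so that the $t$-homogeneous components of a section are again sections — and this I would check on stalks, where \eqref{stalks} brings us back to the constant level $h(x)$ and the lemma preceding the Weierstrass division theorem applies. Hence $\operatorname{gr}_{-p}\cA^{\wcheck h}_X\cong\cO_X$ for every $p$, so $\operatorname{gr}\cA^{\wcheck h}_X\cong\cO_X[t]$ is coherent, and \cite[Chapter II, Proposition 1.4.1]{Sch} then yields coherence of $\cA^{\wcheck h}_X$.

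The main obstacle is exactly this middle point: in the varying-level situation the section spaces $\cA^{\wcheck h}_X(U)$ need not be DNF, so the open-mapping argument that produced divisibility by $t$ in the constant case is not available over a general open set. This is precisely what \eqref{stalks} is designed to circumvent — it allows each assertion we need (stalkwise Noetherianness, Zariskianness, and the $t$-divisibility underlying the computation of $\operatorname{gr}$) to be carried out at the constant level $h(x)$, where the results of Section~\ref{Our set up} apply verbatim.
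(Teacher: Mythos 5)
Your proposal is correct and follows the same route the paper takes: the paper's proof of Proposition~\ref{varying coherence} simply says ``the proof is the same as that of Proposition~\ref{coherence},'' i.e.\ apply the Schapira criterion using the $t$-adic filtration, $\operatorname{gr}\cA^{\wcheck h}_X\cong\cO_X[t]$, and Noetherian/Zariskian stalks. You have additionally spelled out the one point the paper leaves implicit---that the $t$-divisibility step, needed to identify $\operatorname{gr}$, must be checked at the level of stalks via \eqref{stalks} because the open-mapping argument is not available on arbitrary open sets in the varying-level setting---which is a welcome clarification rather than a deviation.
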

The proof of this proposition is the same as the proof of~\ref{coherence}.

Let us now consider an $\cA^{\wcheck h}_X$-coherent sheaf $\cF$. First, given any $x\in X$ and using the fact that the local ring $\cA_{X,x}$ is regular we obtain a resolution $\cL_\cdot^x$ of $\cF_x$  of length $n+1$ with the $\cL_k^x$ free $\cA_{X,x}$-modules of finite rank. Thus, there exists a neighborhood $V_x$ of $x$ such that the connecting homomorphisms $\del_i:\cL_i^x\to \cL_{i-1}^x$ are defined  on $V_x$ and so we can view $\cL_\cdot^x$ as a complex of free $\cA_{V_x}$-modules of finite rank on $V_x$. The homology groups $H_k(\cL_\cdot^x)$ are coherent sheaves on $V_x$ such that the stalks $H_k(\cL_\cdot^x)_x=0$. Thus, possibly by shrinking $V_x$ the complex $\cL_\cdot^x$ is a resolution of $\cF|_{V_x}$.

 In particular, given any $x\in X$ there is neighborhood $V_x$ of $x$ such that  $\cF|_{V_x}$ has a free resolution $\cL_\cdot$. We can shrink the $V_x$ so that the new $V_x$ has compact closure in the old one. This allows us to assume that the maps in the complex $\cL_\cdot$ are defined over $\cA^{h'}_X$ for an $h'>h$. So, if $\cL_\cdot$ is given by 
\beqn
\cL_\cdot = \dots \to (\cA_{V_x}^{\wcheck h})^{\oplus p_k} \to \dots\to (\cA_{V_x}^{\wcheck h})^{\oplus p_1} \to (\cA_{V_x}^{\wcheck h})^{\oplus p_0} 
\eeqn
then for any $k$ with $h' \geq k> h$ we can form the complex $\cL^{k}_\cdot$
\beqn
\cL_\cdot^{k} = \dots \to (\cA_{V_x}^{k})^{\oplus p_k} \to \dots\to  (\cA_{V_x}^{k})^{\oplus p_1} \to (\cA_{V_x}^{k})^{\oplus p_0};
\eeqn
note that $\cL_\cdot^{k}$ it is just a complex and we make no claims about vanishing of any of its cohomology groups. To make it a resolution we pass to the direct limit as follows. For any $h'>h_1>h$ we then have 
\beqn
\cL_\cdot^{\wcheck h_1} = \varinjlim_{ h' \geq k> h_1} \cL_\cdot^{k} = \dots \to (\cA_{V_x}^{\wcheck h_1})^{\oplus p_k} \to \dots\to  (\cA_{V_x}^{\wcheck h_1})^{\oplus p_1} \to (\cA_{V_x}^{\wcheck h_1})^{\oplus p_0} 
\eeqn

Now, clearly, 
\beqn
\cL_\cdot = \cA_{V_x}^{\wcheck h}\otimes_{ \cA_{V_x}^{\wcheck h_1}} \cL_\cdot^{\wcheck h_1} \,.
\eeqn
Passing to the level of stalks we obtain
\beqn
(\cL_\cdot)_x = \cA_{X,x}^{\wcheck h}\otimes_{ \cA_{X,x}^{\wcheck h_1}} (\cL_\cdot^{\wcheck h_1})_x \,.
\eeqn
As $\cA_{X,x}^{\wcheck h}$ is faithfully flat over $ \cA_{X,x}^{\wcheck h_1}$ we conclude that $H_k((\cL_\cdot^{\wcheck h_1})_x)=0$ for $k\geq 1$ because that is the case for $(\cL_\cdot)_x $ and then, finally, that
\beqn
H_k(\cL_\cdot^{\wcheck h_1})=0 \qquad \text{ for $k\geq 1$}\,.
\eeqn
We now set
\beqn
\cF_{V_x}^{\wcheck h_1} \ = \ H_0(\cL_\cdot^{\wcheck h_1})\,.
\eeqn
Note that, by construction, we also then have for $h_1\geq h_2 \geq h$
\beqn
\cF_{V_x}^{\wcheck h_2 }\cong\cA_{V_x}^{\wcheck h_2}\otimes_{\cA_{V_x}^{\wcheck h_1}} \cF_{V_x}^{\wcheck h_1}\,.
\eeqn
In particular, 
\beqn
\cF|_{V_x}\cong\cA_{V_x}\otimes_{\cA_{V_x}^{\wcheck h_1}} \cF_{V_x}^{\wcheck h_1}\,.
\eeqn
By faithful flatness we have a canonical inclusion
\beqn
 \cF_{V_x}^{\wcheck h_1} \subset \cF|_{V_x} \,.
\eeqn

The constructions that we have just performed depend on the choice of the initial resolution $\cL_\cdot$. We will next study this dependence. We can do this in two ways. One is to work to with compact neighborhoods of $x$ to begin with or we work with the open neighborhoods $V_x$, but always think of them as being equipped with an open neighborhood $U_x$ of $x$ such that $U_x \subset \bar U_x \subset V_x$ with $\bar U_x$ compact. Let us choose a cover of $X$ by the $U_x$, with $x\in I$ such that the only finitely many $V_x $ have a non-empty intersection. 

Now, for each $V_x$ we obtain a particular function $l_x$ as above. Furthermore, for each pair of $x,y$ such that $U_x\cap U_y$ is non-empty, we can compare the resolutions $\cL_\cdot(x)$ and $\cL_\cdot(y)$ by first restricting them to $V_x\cap V_y$. So, we have on  $V_x\cap V_y$
\beqn
\begin{CD}
 \dots @>>>(\cA_{V_x\cap V_y}^{\wcheck h})^{\oplus p_1} @>>> (\cA_{V_x\cap V_y}^{\wcheck h})^{\oplus p_0} @>>> \cF|_{V_x\cap V_y} @>>> 0
 \\
 @.    @VV{\al_1}V @VV{\al_0}V @| @.
 \\
  \dots @>>>(\cA_{V_x\cap V_y}^{\wcheck h})^{\oplus q1_1} @>>> (\cA_{V_x\cap V_y}^{\wcheck h})^{\oplus q_0} @>>> \cF|_{V_x\cap V_y} @>>> 0
\end{CD}
\eeqn
where the vertical arrows $\al_i$ are lifts of the identity map. Now the maps $\al_i$ and the maps to the opposite direction are defined on some particular  level $l_{x,y}$ with $l_{x,y}>h$. We now find a $k_x$ on $U_x$ such that 
\beqn
\text{ $l_x \geq k_x > h$ and $l_{x,y}\geq k_x$, $l_y\geq k_x$ for all $y$ such that $U_x\cap U_y$ not empty}\,.
\eeqn
For any $k$ such that $k_x>k>h$ we can now construct the sheaf $\cF_{U_x}^{\wcheck k}$ by making use of the resolution $\cL_\cdot(x)$. Furthermore, if $U_x\cap U_y$ in not empty then, by our choice of $D_x$  we get an isomorphism 
between $\cF_{U_x}^{\wcheck k}|_{U_x\cap U_y}$  and $\cF_{U_y}^{\wcheck k}|_{U_x\cap U_y}$. This isomorphism becomes, by construction,  the identity when we extend scalars to $\cA^{\wcheck h}_X$. This allows us to identify $\cF_{U_x}^{\wcheck k}|_{U_x\cap U_y}$ and $\cF_{U_y}^{\wcheck k}|_{U_x\cap U_y}$ as sub sheaves of $\cF|_{U_x\cap U_y}$. So we can glue the sheaves to obtain a sheaf $\cF_{U_x\cup U_y}^{\wcheck k}$. 

Finally, we now choose a function $k$ such that $k>h$ and $k_x\geq k$ for all $x\in X$. Once we have done this then for any $h_1$ with $k>h_1>h$ we have constructed sheaves $\cF^{\wcheck h_1}$ which coincide with the $\cF_{U_x}^{\wcheck h_1}$ on $U_x$ and which thus satisfy
\beqn
\cF\cong\cA_X^{\wcheck h}\otimes_{\cA_X^{\wcheck h_1}} \cF^{\wcheck h_1 }\,.
\eeqn
Furthermore, we have a canonical inclusion:
\beq
\label{inclusion}
\cF^{\wcheck h_1 }\subset  \cF\,.
\eeq

\section{Cartan theorems for coherent  $\cA_X^{\wcheck h}$-modules on compact blocks}
\label{compact section}

In this section we prove Cartan's theorem for compact blocks in $\bbC^N$ for certain varying levels $h(x)$. We restrict our attention to blocks mainly for simplicity of  exposition as this is enough for our arguments in the rest of the paper. We make use of the $L^2$-methods of H\"ormander as they seem best suited to this task.

We will denote the coordinates in $\bbC^N$ by $z=(z_1, \dots, z_N)$. Recall that compact blocks $Q\subset \bbC^N$ are products of rectangles $R =\{z\in \bbC \mid a\leq \Re z \leq b \ , \ c\leq \Im z \leq d\}$.
We allow for the possibility of degenerate rectangles and argue by induction on the dimension of $Q$.  
 We write $r=\sup\{|z_1|,\dots,|z_n|\}$ and use it as a norm on $Q$. 
 
 We consider functions $h: Q \to \bbR^+$ which only depend on $r$ and which are twice differentiable as functions of $r$ satisfying the following condition:
\begin{eqnarray}
h'(r)^2 \geq   h(r)   h''(r)\qquad \text{for all $r>0$} \,.
\label{h condition}
\end{eqnarray}
Let us note that it is easy to characterize such functions. We write
\bea
\label{big H}
h(r) = \exp\big( -\int_0^r H(s)ds \big)\,.
\eea
The condition \eqref{h condition} is then equivalent to 
\bea
H(r)^2 \geq -  H'(r) + H(r)^2 \ \ \ \mbox{for all } r > 0  \label{3g}\,.
\eea
This condition holds precisely as long as $H'(r)\geq 0$, i.e., as long as $H(r)$ is non-decreasing. 

\begin{thm}
\label{compact case}
If $h$ satisfies condition~\eqref{h condition} and $\cF$ is a coherent  $\cA^{\wcheck h}_{X_\nu}$-module defined on a neighborhood of $Q$ then $\oh^k(Q,\cF)=0$ for $k\geq 1$ and $\cF$ is generated by global sections in the strong sense, i.e., there is an $n$ and a surjection $(\cA^{\wcheck h}_{Q})^{\oplus n} \to \cF$. 
\end{thm}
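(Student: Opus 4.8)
The plan is to follow H\"ormander's $L^2$-strategy for Stein domains, adapted to the $\cA^{\wcheck h}_X$-setting, combined with an induction on $\dim Q$. First I would reduce to a local problem: since $\cF$ is coherent on a neighborhood of $Q$, around each point of $Q$ we have a finite free resolution of $\cF$ by sheaves $(\cA^{\wcheck h}_X)^{\oplus p}$ (using Theorem~\ref{regular local} and the fact that the stalks are regular local of dimension $n+1$), defined on some level $k>h$ as in section~\ref{varying levels}. Using a Cartan-type glueing argument (Cartan's Lemma on splitting matrices over blocks, adapted to our structure sheaf), the claim for arbitrary coherent $\cF$ reduces to the vanishing of $\oh^k(Q, (\cA^{\wcheck h}_Q)^{\oplus p})$ for $k\geq 1$, hence to $\oh^k(Q,\cA^{\wcheck h}_Q)=0$. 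Taking direct limits over levels $k>h$ — which is exact because the transition maps are nuclear and the spaces are DNF — it suffices to prove $\oh^k(Q,\cA^{k}_Q)=0$ for $k\geq 1$ at a fixed level and then pass to the limit; equivalently one proves the $\bar\partial$-complex with values in $A_{k(\cdot)}$ is exact on $Q$ in positive degrees.

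The heart of the matter is then the $\bar\partial$-estimate. Writing an element of $\cA^{h}_Q(U)$ as $f=\sum_j a_j(z)\,t^j$ with $a_j$ holomorphic and $\sum_j |a_j(z)|\,N_j(h(|z|))^{1/2}<\infty$, the condition $f(z)\in A_{h(|z|)}$ becomes, componentwise, an ordinary $\bar\partial$-problem for each $a_j$ on $Q$ but with the weight $N_j(h(r)) = \Vert t^j\Vert_{h(r)}^2$ depending on $z$. So I would set up the weighted $L^2$-space $\bigoplus_j L^2(Q, e^{-\varphi_j})$ with $\varphi_j = -\log N_j(h(|z|)) + (\text{a plurisubharmonic weight adapted to }Q)$ and verify H\"ormander's hypothesis that $\varphi_j$ is plurisubharmonic uniformly in $j$. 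This is exactly where condition~\eqref{3j} (subharmonicity of $N_j$) together with condition~\eqref{h condition} on $h$ is used: the computation $-\frac{d^2}{dr^2}\log N_j(h(r)) - \frac1r\frac{d}{dr}\log N_j(h(r)) \geq 0$ (the radial Laplacian being nonnegative) follows by the chain rule from \eqref{3j} applied at the point $h(r)$ and the inequality \eqref{3g}$\iff$\eqref{h condition}; since $|z|$ is plurisubharmonic and $-\log N_j$ is then a convex increasing function of a plurisubharmonic function after this radial check, $-\log N_j(h(|z|))$ is plurisubharmonic, uniformly in $j$. One also needs the controlled-nuclearity bound \eqref{1g} to pass between the Banach norms $\Vert\cdot\Vert_h$ and the $L^2$-norms on $Q$ without losing the level, i.e.\ to guarantee that the $L^2$-solution actually lies in $\cA^{k'}_Q$ for some $k'$ with $h<k'<k$, uniformly over the pieces of the Chech/resolution data.

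For the induction on $\dim Q$: a degenerate block is a block in lower-dimensional $\bbC^{N'}$, so the base case is a point (where the statement is Bungart's theorem, or trivial), and the inductive step handles a genuine rectangle factor $R=\{a\le\Re z_1\le b,\ c\le\Im z_1\le d\}$ by the standard trick of covering $R$ by two overlapping sub-blocks, applying the Mittag-Leffler/Cousin argument, and using the approximation lemmas of section~\ref{approximation} to kill the resulting $\oh^1$. Global generation in the strong sense then follows from $\oh^1$-vanishing applied to the kernel of a local surjection $(\cA^{\wcheck h}_Q)^{\oplus n}\to\cF$ near $Q$, again via Cartan's glueing. I expect the main obstacle to be precisely the uniformity in $j$ of the H\"ormander estimates: one must produce a single plurisubharmonic weight and a single constant controlling the $\bar\partial$-solution operator across all countably many components $a_j$ simultaneously, and then reassemble the $a_j$ into an element of the correct level-$k'$ space — this is where \eqref{1g} and \eqref{3j} are doing the real work, and where the argument genuinely departs from the classical one.
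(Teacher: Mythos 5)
Your overall architecture --- H\"ormander $L^2$-theory for the $\bar\partial$-complex with plurisubharmonic weights coming from~\eqref{3j} and~\eqref{h condition}, the use of~\eqref{1g} to pass between the Banach norms $\Vert\cdot\Vert_h$ and the $L^2$-norms, induction on $\dim Q$, and Cartan-type gluing --- is the same as the paper's, and you correctly identify exactly where the hypotheses on $A$ do the real work.

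There is, however, a gap in the reduction step. You claim the coherent case "reduces to the vanishing of $\oh^k(Q,(\cA^{\wcheck h}_Q)^{\oplus p})$, hence to $\oh^k(Q,\cA^{\wcheck h}_Q)=0$." This is not the right reduction, and the discrepancy is substantial. In the inductive scheme of \cite[\S 7.2]{H} one fibers $Q\to[a,b]$, covers by rectangles $U_i$ with only consecutive overlaps, uses Cartan A on the lower-dimensional fibers (induction hypothesis) to get local surjections $(\cA^{\wcheck h}_{U_i})^{\oplus p_i}\twoheadrightarrow\cF|_{U_i}$, and glues these via transition matrices $\alpha_i,\beta_i$ into a surjection $E\to\cF$ where $E$ is a \emph{locally free} $\cA^{\wcheck h}$-module, not a free one. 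One then dimension-shifts through the exact sequences $0\to\cK_{p+1}\to E_p\to\cK_p\to 0$, and for this one needs $\oh^k(Q,E_p)=0$ for bundles $E_p$. There is no ready-made Oka--Grauert principle in this setting that would let you trivialize $E$, and the paper does not attempt one; instead, Proposition~\ref{cohomology vanishing} is proved directly for vector bundles, and in the $L^2$-setting this is Proposition~\ref{vb L2 exactness}, a separate and nontrivial argument: one sets up the Hilbert space $\cH(q)$ of transition-compatible tuples inside the product $\cX(q)$, compares the restricted operator $\tilde T$ with the componentwise $T$, and proves $\tilde T^*=T^*$ on $D(\tilde T^*)$ via the commutator identity $TL^*=L^*T+R$ with $R$ bounded. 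Your proposal glosses over all of this by assuming you can get to the free case directly. Two smaller misattributions: the approximation lemmas of section~\ref{approximation} are not used in the proof of Theorem~\ref{compact case} (they enter only in section~\ref{main section}, in the passage from compact blocks to Stein manifolds), and the mechanism in the inductive step is dimension-shifting through bundle resolutions rather than a Mittag--Leffler/Cousin argument.
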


\begin{rmk}
Note that we can view $\cA^{\wcheck h}_{Q}$ as specifying an analogue of a compact domain. According to the proposition when $h$ satisfies condition~\eqref{h condition} this domain is holomorhically convex. We can view such domains as compact neighborhoods of $Q$ in $Q\times \bbC$. By the discussion above such holomorphically convex domains are plentiful and form a basis of all compact neighborhoods. 
\end{rmk}

We prove the following theorem using Dolbeault cohomology and standard $L^2$ methods. 
\begin{prop}
If $Q$ is a compact block and $h$ satisfies condition~\eqref{h condition} then $\oh^k(Q,\cA_{Q}^{\wcheck h})=0$ for $k\geq 1$. Furthermore, if $E$ is a locally free $\cA^{\wcheck h}_{\bbC^N}$-module of finite rank (i.e., an $\cA^{\wcheck h}_{\bbC^N}$ vector bundle) defined in a neighborhood of $Q$ then $\oh^k(Q,E)=0$ for $k\geq 1$.
\label{cohomology vanishing}
\end{prop}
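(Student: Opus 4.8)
The plan is to reduce the statement to a $\bar\partial$-problem on a genuine pseudoconvex open subset of $Q\times\bC$ and then invoke Hörmander's $L^2$-existence theorem, matching the weighted $L^2$-norms against the norms $\|\,\|_h$ that define $\cA^{\wcheck h}_Q$. Concretely, a section of $\cA^{\wcheck h}_Q$ over an open set $V\subset Q$ is a holomorphic function $f(z,t)=\sum_{j\ge 0}a_j(z)t^j$ with $f(z,\cdot)\in A_{h_1(z)}$ for some $h_1>h$; the condition $\sum_j|a_j(z)|\,\|t^j\|_{h_1}<\infty$ is, via $N_j(h)=\|t^j\|_h^2$ and the representation \eqref{big H}, exactly an integrability/growth condition in a shrinking polydisc fibred over $z$. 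So I would first show that the ``compact domain'' $\cA^{\wcheck h}_Q$ corresponds, in the sense of the Remark, to the family of open sets $\Omega_{h_1}=\{(z,t)\in Q\times\bC : |t|<$ something controlled by $h_1(|z|)\}$, and that each $\Omega_{h_1}$ is pseudoconvex: here is where condition~\eqref{h condition}, rewritten as \eqref{3g} saying $H$ is non-decreasing, enters, guaranteeing that $-\log$ of the defining function is plurisubharmonic, i.e. that we have ``enough pseudoconvex neighborhoods.'' This is the translation of the subharmonicity hypothesis \eqref{3j} (which forces \eqref{h condition} on the relevant $h$) into honest plurisubharmonicity.

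Next I would set up the Dolbeault complex computing $\oh^k(Q,\cA^{\wcheck h}_Q)$. Since $Q$ is compact, $\oh^k(Q,\cA^{\wcheck h}_Q)=\varinjlim \oh^k(V,\cA^{h_1}_V)$ over open $V\supset Q$ and $h_1>h$, and each such group is computed by $(0,k)$-forms smooth in $z$ with holomorphic dependence recorded through the $t$-expansion. I would instead compute on the open domain $\Omega_{h_1}\subset\bC^{N+1}$: an element of $\cA^{h_1}_Q(V)$ is just a holomorphic function on a neighborhood of $\overline{\Omega_{h_1}}$ in the $t$-direction, so the $\bar\partial$-complex for $\cA^{\wcheck h}_Q$ on $Q$ is the direct limit of the ordinary $\bar\partial$-complexes on the $\Omega_{h_1}$. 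Then for a $\bar\partial$-closed $(0,k)$-form $\alpha$ on $\Omega_{h_1}$, $k\ge1$, Hörmander's theorem on the pseudoconvex $\Omega_{h_1}$ with a suitable plurisubharmonic weight gives a solution $u$ of $\bar\partial u=\alpha$ with $L^2$-estimates on a slightly smaller pseudoconvex $\Omega_{h_2}$, $h<h_2<h_1$; passing to the limit kills $\oh^k$. The controlled-nuclearity condition \eqref{1g} is what lets me translate the $L^2$-bound into membership in $\cA^{\wcheck h_2}_Q$, i.e. into a bound $\sum_j|a_j(z)|\|t^j\|_{h_2}<\infty$ — this is exactly the ``passing between our norms and $L^2$-norms'' flagged after \eqref{1g}.

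For the bundle statement: a locally free $\cA^{\wcheck h}_{\bC^N}$-module $E$ of finite rank on a neighborhood of $Q$ is, after shrinking, trivial on a finite cover of $Q$ by blocks, with transition matrices in $GL$ over $\cA^{\wcheck h}$; but for the vanishing $\oh^k(Q,E)=0$ one does not even need triviality — one runs the same $L^2$-argument with the Dolbeault complex of $E$-valued forms, the point being that the Hermitian metric on the finite-rank bundle only changes the weight by a bounded factor, which is harmless for the estimates. Alternatively, since $Q$ is a block, I expect $E$ to be globally free (an analogue of the fact that vector bundles on a polydisc/Stein rectangle are trivial), reducing the bundle case to the trivial-coefficient case; but I would phrase the proof so as not to depend on this. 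The approach proceeds by induction on $\dim Q$, degenerate rectangles included, mirroring the classical reduction.

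The main obstacle, as anticipated in the text, is the \emph{quantitative} comparison between the intrinsic norms $\|\,\|_h$ on $A$ and the $L^2$-norms produced by Hörmander's machinery: one must choose the plurisubharmonic weight on $\Omega_{h_1}$ so that (a) it makes $\bar\partial$ solvable with the right gain, and (b) the resulting $L^2$-bound on $u=\sum_j u_j(z)t^j$ forces $\sum_j|u_j(z)|\,\|t^j\|_{h_2}<\infty$ for some $h_2>h$. Getting (b) is precisely where \eqref{1g} — the $j^{-1}$ half of controlled nuclearity — is used, since it bounds how much the $\|t^j\|$ can shrink from level $h_1$ to level $h_2$ and hence lets a polynomial-in-$j$ loss from Cauchy estimates on $\Omega_{h_1}$ be absorbed. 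Making the weight and the radii of the $\Omega_{h_1}$ depend on $h$ through \eqref{big H} in a way compatible with both the pseudoconvexity of \eqref{3g} and the norm comparison of \eqref{1g} simultaneously is the delicate bookkeeping that the next section must carry out.
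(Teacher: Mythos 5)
Your reduction to a genuine pseudoconvex domain $\Omega_{h_1}\subset Q\times\bC$ does not exist in this setting, and this is a fatal gap. The locality condition~\eqref{2c}, $R(h,j)=\Vert t^{j+1}\Vert_h/\Vert t^j\Vert_h\to 0$, is imposed \emph{precisely} to exclude the case $\Vert t^j\Vert_h = h^j$, i.e.\ the case where $A_h$ is a ring of holomorphic functions on a disc. For the motivating norm $\Vert t^j\Vert_h = h^j/j!$ one can have $\sum_j|a_j|\,h^j/j!<\infty$ while $\sum_j a_j t^j$ has radius of convergence $0$ (take $a_j=\sqrt{j!}$); elements of $A_h$ are in general formal power series, not holomorphic functions on any open set in $\bC$. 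Consequently there is no open set $\Omega_{h_1}\subset\bC^{N+1}$ on which $\cA^{h_1}_Q(V)$ is the algebra of holomorphic functions, and one cannot run Hörmander's $L^2$-existence theorem in $N+1$ complex variables. The paper circumvents this by keeping $t$ formal: it writes a $\bar\partial$-closed form as $\omega=\sum_j\omega_j t^j$ with $\omega_j$ ordinary forms on $Q\subset\bC^N$, solves each scalar equation $\bar\partial u_j=\omega_j$ on the $N$-dimensional block $Q$ using H\"ormander's theorem with the plurisubharmonic weight $W_j(z)=-2\log\Vert t^j\Vert_{h(r)}$ (plurisubharmonicity is Lemma~\ref{lem11}, which is where \eqref{h condition} and \eqref{3j} enter), and then resums, using the $j^{-1}$ half of controlled nuclearity~\eqref{1g} exactly once, to convert the resulting $\ell^2$-in-$j$ bound back into an $\ell^1$-in-$j$ bound at a slightly smaller level. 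Your instinct that subharmonicity plays the role of pseudoconvexity and that \eqref{1g} bridges sup-norms and $L^2$-norms is right in spirit, but the mechanism has to operate fibrewise in $j$, not on a domain in $\bC^{N+1}$.

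A secondary but real gap is the vector bundle case, which you treat as a routine modification. In the paper, because forms with values in $E$ are realized via trivializations and transition functions $G_{ij}$ on overlapping blocks, the $\bar\partial$-operator $T$ on the ambient product space does not preserve the orthogonal complement $\cH(q)^\perp$ of the subspace cut out by the cocycle condition; one has $TL^*u_{ij}=L^*Tu_{ij}+R(u_{ij})$ with $R$ a bounded error term, and one must then prove $D(\tilde T^*)=D(T^*)\cap\cH(q+1)$ and $\tilde T^*=T^*$ there in order to import the scalar $L^2$-estimate. This is not captured by the remark that a Hermitian metric changes the weight by a bounded factor, and the alternative claim that every $\cA^{\wcheck h}$-bundle on a block is globally trivial is precisely the content of the theorem being proved (it follows \emph{from} Cartan A on blocks), so it cannot be assumed at this stage.
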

 First of all, we prove the theorem by proving that the (global) Dolbeault complex is exact. To explain the Dolbeault complex let us return temporarily to the case of an arbitrary complex manifold $X$. First of all let us consider the classical Dolbeault complex on $X$:
\beq
\label{classical Dolbeault}
C^\infty_X=\Omega^{0,0}_X\xrightarrow{\bar\del} \Omega_X^{0,1}  \xrightarrow{\bar\del} \dots \xrightarrow{\bar\del} \Omega^{0,q}_X \xrightarrow{\bar\del}
\eeq
which gives us a resolution of $\cO_{X}$. Let us write $\cC^{\infty,\wcheck h}_{X}$ for the smooth version of $\cA^{\wcheck h}_{X}$, i.e., we first set:
\begin{equation*}
\begin{gathered}
    \cC^{\infty,h}_X(U) = \{f: U \to \hat A=\bC[[t]] \mid f \ \text{smooth and $\frac{\partial^\alpha\! f}{\partial x^\alpha}(x)\in A_{h(x)}$}
    \\
    \text{ for all $x\in U$ and all multi indices $\alpha$}\}\,.
    \end{gathered}
\end{equation*}
and then pass to a direct limit. Note that if $h(x)=k$ is constant then  $\cC^{\infty,k}_X(U)$ agrees with the usual notion of smooth functions with values in $A_k$ and then $\cC^{\infty,k}_X(U)= A_h \hat \otimes_\bC C^\infty_X(U)$. This follows from the fact, which is easy to verify in our case directly that continuity and continuous differentiability can be checked component wise (weak continuity/differentiability).

We can now form the  $\bar\del$-complex of $\cA^{\wcheck h}_{X}$ by tensoring the above complex~\eqref{classical Dolbeault} with $\cC^{\infty,\wcheck h}_{X}$ over $C^\infty_X$. Let us write
\beqn
\cA_X^{\wcheck h,(0,q)} = \cC^{\infty,\wcheck h}_{X}\otimes_{C^\infty_X} \Omega^{0,q}_X
\eeqn
and then we obtain a complex
\beq
\label{our Dolbeault}
\cC^{\infty,\wcheck h}_{X}=\cA_X^{\wcheck h,(0,0)} \xrightarrow{\bar\del} \cA_X^{\wcheck h,(0,1)}  \xrightarrow{\bar\del} \dots \xrightarrow{\bar\del} \cA_X^{\wcheck h,(0,q)} \xrightarrow{\bar\del}\,.
\eeq
Finally if $E$ is locally free $\cA^{\wcheck h}_{X}$  sheaf we write $\cE = \cC^{\infty,\wcheck h}_{X}\otimes_{\cA^{\wcheck h}_{X}} E$ for the smooth version and we also write
\beqn
\cE_X^{\wcheck h,(0,q)} = \cE \otimes_{\cC^{\infty,\wcheck h}_{X}} \cA_X^{\wcheck h,(0,q)} 
\eeqn
and we obtain a complex 
\beq
\label{bundle Dolbeault}
\cE=\cE_X^{\wcheck h,(0,0)} \xrightarrow{\bar\del} \cE_X^{\wcheck h,(0,1)}  \xrightarrow{\bar\del} \dots \xrightarrow{\bar\del} \cE_X^{\wcheck h,(0,q)} \xrightarrow{\bar\del}\,;
\eeq
the operator $\bar\del$ is well-defined because $E$ is a holomorphic $\cA^{\wcheck h}_{X}$-bundle. 
The complexes~\eqref{our Dolbeault} and~\eqref{bundle Dolbeault} are resolutions of $\cA_X^{\wcheck h}$ and $E$, respectively. One can deduce this from the exactness of~\eqref{classical Dolbeault} as follows. As exactness is a local question, the exactness of~\eqref{bundle Dolbeault} follows from that of~\eqref{our Dolbeault}. We have, passing to stalks:
\beqn
\cA_{X,x}^{\wcheck h,(0,q)} = \cC^{\infty,\wcheck h}_{X,x}\otimes_{C^\infty_{X,x}} \Omega^{0,q}_{X,x}\,.
\eeqn
Now, arguing just as for~\eqref{stalks}
 in case of $\cA_{X,x}^{\wcheck h}$, we have
\beqn
\cC^{\infty,\wcheck h}_{X,x} \ = \ \cC^{\wcheck {h(x)}}_{X,x} 
\eeqn
and, finally, by~\cite[\S 41 (6) b)]{Ko},
\eqn
\cC^{\wcheck {h(x)}}_{X,x} = \varinjlim_{U\ni x} \cC^{\wcheck {h(x)}}_X(U)  = \varinjlim_{U\ni x} A_{\wcheck {h(x)}}\hat\otimes_\bC C^\infty_X(U) = A_{\wcheck {h(x)}}\hat\otimes_\bC \varinjlim_{U\ni x} C^\infty_X(U)  = A_{\wcheck {h(x)}}\hat\otimes_\bC C^\infty_{X,x}\,.
\eneqn
Putting things together, we conclude that 
\beq
\cA_{X,x}^{\wcheck h,(0,q)} = A_{\wcheck {h(x)}}\hat\otimes_\bC C^\infty_{X,x}\otimes_{C^\infty_{X,x}} \Omega^{0,q}_{X,x} = A_{\wcheck {h(x)}}\hat\otimes_\bC \Omega^{0,q}_{X,x}\,.
\eeq
The exactness of~\eqref{our Dolbeault}, and hence that of~\eqref{bundle Dolbeault}, follow from the exactness of~\eqref{classical Dolbeault} because $A_{\wcheck {h(x)}}\hat\otimes_\bC$ is an exact functor on DF-spaces by Lemma~\ref{exactness of tensor product}. 

Finally, as $\cA_X^{\wcheck h,(0,q)}$ and $\cE_X^{\wcheck h,(0,q)}$ are modules over $C^\infty_X$ they are soft and hence acyclic. 
Thus we have reduced the proof of Proposition~\ref{cohomology vanishing} to the following:
\begin{prop}
\label{Dolbeault exactness}
If $Q$ is a compact block in $\bC^N$ and $h$ satisfies condition~\eqref{h condition} then the complex
\eqn
\cA_{\bC^N}^{\wcheck h,(0,0)}(Q) \xrightarrow{\bar\del} \cA_{\bC^N}^{\wcheck h,(0,1)}(Q)   \xrightarrow{\bar\del} \dots \xrightarrow{\bar\del} \cA_{\bC^N}^{\wcheck h,(0,q)}(Q)  \xrightarrow{\bar\del}
\eneqn
is exact in degrees $q\geq 1$. Similarly,  if $E$ is a locally free $\cA^{\wcheck h}_{\bbC^N}$-module of finite rank (i.e., an $\cA^{\wcheck h}_{\bbC^N}$ vector bundle) defined in a neighborhood of $Q$ then the complex
\eqn
\cE_{\bC^N}^{\wcheck h,(0,0)}(Q) \xrightarrow{\bar\del} \cE_{\bC^N}^{\wcheck h,(0,1)}(Q)  \xrightarrow{\bar\del} \dots \xrightarrow{\bar\del} \cE_{\bC^N}^{\wcheck h,(0,q)}(Q) \xrightarrow{\bar\del}\eneqn
is exact in degrees $q\geq 1$.
\end{prop}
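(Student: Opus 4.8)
The plan is to prove both assertions by H\"ormander's weighted $L^{2}$-method, after reducing them to a countable family of classical $\bar\del$-problems indexed by the powers of $t$. Since $\bar\del$ is $\bbC$-linear and does not involve $t$, a form $\omega\in\cA_{\bbC^{N}}^{\wcheck h,(0,q)}(Q)$ can be written $\omega=\sum_{j\ge 0}\omega_{j}t^{j}$, each $\omega_{j}$ an ordinary smooth $(0,q)$-form on a fixed neighbourhood of $Q$, and $\bar\del\omega=0$ is equivalent to $\bar\del\omega_{j}=0$ for all $j$. By definition of $\cC^{\infty,\wcheck h}_{\bbC^{N}}=\varinjlim_{h_{1}>h}\cC^{\infty,h_{1}}_{\bbC^{N}}$ we fix a compact neighbourhood $Q_{0}$ of $Q$ and a level function $h_{1}>h$ with $\sum_{j}\sup_{Q_{0}}\bigl(|\partial^{\alpha}\omega_{j}|\,\|t^{j}\|_{h_{1}}\bigr)<\infty$ for every multi-index $\alpha$. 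It then suffices to produce $u=\sum_{j}u_{j}t^{j}$ with $\bar\del u_{j}=\omega_{j}$ and $\sum_{j}\sup_{Q}\bigl(|\partial^{\alpha}u_{j}|\,\|t^{j}\|_{h_{2}}\bigr)<\infty$ for all $\alpha$ and some level function $h<h_{2}\le h_{1}$: then $u\in\cA_{\bbC^{N}}^{\wcheck h,(0,q-1)}(Q)$ and $\bar\del u=\omega$, which is exactness in degree $q$ for each $q\ge 1$. The bundle statement is reduced similarly, after trivializing $\cE$ smoothly; see the last paragraph.

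For a fixed $j$ I would solve $\bar\del u_{j}=\omega_{j}$ on a bounded convex --- hence pseudoconvex --- open set $U$ with $Q\subset U\subset Q_{0}$, by applying H\"ormander's theorem with the weight $\varphi_{j}(z):=-\log N_{j}\bigl(\tilde h(r(z))\bigr)$, where $r(z)=\sup_{i}|z_{i}|$, $\tilde h:=c\,h$, and $c>1$ is chosen (by compactness of $Q_{0}$) so that $h<\tilde h<h_{1}$ on $Q_{0}$. After adding the fixed strictly plurisubharmonic term $\lambda|z|^{2}$ ($\lambda>0$) this produces a solution with $\int_{U}|u_{j}|^{2}N_{j}(\tilde h)\le C_{U}\int_{U}|\omega_{j}|^{2}N_{j}(\tilde h)$, where $C_{U}$ depends only on $\operatorname{diam}U$ and in particular not on $j$. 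The only point requiring an argument is that $\varphi_{j}$ is plurisubharmonic, and this is exactly where conditions \eqref{3j} and \eqref{h condition} are used: $r=\sup_{i}|z_{i}|$ is plurisubharmonic, $F_{j}(r):=-\log N_{j}(\tilde h(r))$ is non-decreasing (by the monotonicity of $N_{j}$ and of $\tilde h=ch$), so $\varphi_{j}(z)=\max_{i}F_{j}(|z_{i}|)$, and a short computation reduces the one-variable subharmonicity $F_{j}''+r^{-1}F_{j}'\ge 0$ precisely to conditions \eqref{3j} and \eqref{h condition} --- the latter being the $\log$-concavity of $h$ ($H'\ge 0$ in the notation of \eqref{big H}--\eqref{3g}), which $\tilde h=ch$ inherits. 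Hence each $F_{j}(|z_{i}|)$ is subharmonic and $\varphi_{j}$, a maximum of plurisubharmonic functions, is plurisubharmonic; one regularizes it to a smooth plurisubharmonic weight before invoking the elliptic theory below, which changes the weights only by a $j$-uniform bounded factor.

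The remaining, and main, work is the uniform-in-$j$ bookkeeping, and this is where the ``controlled nuclearity'' \eqref{1g} is used decisively. First, the hypothesis on $\omega$ must be fed into the $L^{2}$-framework: since $\tilde h<h_{1}$, the bound \eqref{1g} in the form $\|t^{j}\|_{\tilde h}\le Kj^{-1}\|t^{j}\|_{h_{1}}$ (with $K$ uniform over the levels occurring on $Q_{0}$) gives $\sum_{j}\int_{U}|\omega_{j}|^{2}N_{j}(\tilde h)\le K^{2}|U|\sum_{j}j^{-2}\bigl(\sup_{Q_{0}}(|\omega_{j}|\,\|t^{j}\|_{h_{1}})\bigr)^{2}<\infty$, so the countably many $L^{2}$-problems can be solved simultaneously. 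Second, the $L^{2}$-solutions must be returned to the space $\cA_{\bbC^{N}}^{\wcheck h}$: taking $u_{j}$ to be the $L^{2}(\varphi_{j})$-minimal solution, it satisfies the elliptic equation $\Box_{\varphi_{j}}u_{j}=\vartheta_{\varphi_{j}}\omega_{j}$, and interior elliptic regularity on $Q\subset\subset U$ converts the weighted $L^{2}$-bound into pointwise bounds on $u_{j}$ and all its derivatives over $Q$. The constants in this step, and those arising from comparing the weight $e^{-\varphi_{j}}$ with $N_{j}(\tilde h)$ on $U$, grow with $j$, but only at a rate governed by the $h$-derivatives of $\log N_{j}$ --- which \eqref{1g} controls relative to the quotients $\|t^{j}\|_{\tilde h}/\|t^{j}\|_{h_{2}}$ of norms at nearby levels. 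Choosing $h<h_{2}:=\sqrt{c}\,h<\tilde h$ and chaining \eqref{1g} through finitely many intermediate levels between $h_{2}$ and $\tilde h$, one makes all these $j$-dependent losses negligible against the decay of $\|t^{j}\|_{h_{2}}/\|t^{j}\|_{h_{1}}$; this is the crux. One then obtains $\sum_{j}\sup_{Q}\bigl(|\partial^{\alpha}u_{j}|\,\|t^{j}\|_{h_{2}}\bigr)<\infty$ for every $\alpha$, so $u=\sum_{j}u_{j}t^{j}\in\cA_{\bbC^{N}}^{\wcheck h,(0,q-1)}(Q)$ and $\bar\del u=\omega$, completing the scalar case.

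For the bundle case one argues in the same spirit, with more notation. The locally free $\cC^{\infty,\wcheck h}_{\bbC^{N}}$-module $\cE=\cC^{\infty,\wcheck h}_{\bbC^{N}}\otimes_{\cA^{\wcheck h}_{\bbC^{N}}}E$ is free on a convex neighbourhood of $Q$ ($\cC^{\infty,\wcheck h}_{\bbC^{N}}$ is a fine sheaf and a convex set is contractible); in a smooth frame $\bar\del$ becomes $\bar\del+\theta$ with $\theta=\sum_{j}\theta_{j}t^{j}$ a smooth matrix-valued $(0,1)$-form, $\theta_{0}$ being the $\bar\del$-connection of the ordinary holomorphic bundle $E/tE$. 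Expanding $(\bar\del+\theta)u=\omega$ in powers of $t$ triangularizes the system, $(\bar\del+\theta_{0})u_{l}=\omega_{l}-\sum_{i=1}^{l}\theta_{i}\wedge u_{l-i}$, so one solves level by level --- each step an ordinary $\bar\del$-problem for $E/tE$ on a convex domain, with $j$-independent H\"ormander constants --- and re-sums the solutions exactly as in the scalar case, the convolution terms absorbed by the same use of \eqref{1g}. I expect the genuine obstacle, in both cases, to be this uniform bookkeeping: reconciling H\"ormander's weighted estimates --- in which the weights $\varphi_{j}$, and hence the associated elliptic constants, grow with $j$ --- with membership in $\cA^{\wcheck h}$ at a level strictly above $h$, uniformly in $j$ and in $z\in Q$. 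Condition \eqref{1g} is tailor-made for this, but getting every step --- the $L^{2}\to C^{\infty}$ passage, the changes of level, and the convolutions of the bundle case --- to line up uniformly is the technical heart of the proof.
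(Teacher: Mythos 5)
For the structure sheaf case your route is essentially the paper's: expand $\omega=\sum_j\omega_jt^j$, solve each classical $\bar\del$-problem with the plurisubharmonic weight $W_j(z)=-2\log\Vert t^j\Vert_{h(r)}=-\log N_j(h(r))$ (which is exactly what Lemma~\ref{lem11} supplies, using \eqref{3j} and \eqref{h condition}), exploit compactness of $Q$ to get a $j$-uniform constant in H\"ormander's estimate, and return from $L^2$ to $\cA^{\wcheck h}$ by way of the controlled nuclearity \eqref{1g}. One small remark: the paper's way back is cleaner than the one you sketch. It introduces the Hilbert spaces $H_{h(r)}$, observes that $A_{h_1}\hookrightarrow H_{h_1}$ trivially ($\ell^1\subset\ell^2$) and that $H_{h_1}\hookrightarrow A_{h_2}$ for $h_2<h_1$ by \emph{one} application of \eqref{1g} together with Cauchy--Schwarz, and this pointwise $\ell^2\!\to\!\ell^1$ step does all the work. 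Your proposal instead aims directly at $\sum_j\sup_Q|\partial^\alpha u_j|\,\Vert t^j\Vert_{h_2}<\infty$ and speaks of elliptic constants ``growing with $j$'' that must be beaten back by \eqref{1g}; but the interior elliptic estimates for $\bar\del$ are $j$-independent (the operator is the same for every $j$), the weight $e^{-\varphi_j}$ \emph{equals} $N_j(\tilde h)$ so there is nothing to ``compare'', and the genuine loss is only the single $\ell^2\!\to\!\ell^1$ step. So this paragraph of your argument could be considerably simplified.

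For the vector bundle case you take a genuinely different route, and it has two real gaps. The paper never trivializes $\cE$ globally: it fixes a finite cover of $Q$ by compact blocks $Q_i$ on which $E$ is trivial, passes to a fixed level $h_1>h$, forms $\cX(q)=\prod_i\prod_kL^2_{(0,q)}(Q_i;H_{h_1(r)})$ and the closed subspace $\cH(q)$ of $G_{ij}$-compatible tuples, and then proves $\operatorname{ran}\tilde T=\ker\tilde S$ by the $T^*$-estimate of \cite[Lemma~4.1.1]{H}; the only extra work is showing that the componentwise $\bar\del$ preserves $\cH(q)^\perp$ up to a bounded operator $R$, which reduces everything to Lemma~\ref{lem77}. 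Your alternative --- a global smooth frame for $\cE$ over a convex neighbourhood of $Q$, writing $\bar\del$ as $\bar\del+\theta$ with $\theta=\sum_j\theta_jt^j$ and solving the triangular system $(\bar\del+\theta_0)u_l=\omega_l-\sum_{i\ge1}\theta_i\wedge u_{l-i}$ level by level --- is attractive, but: (i) the smooth trivialization is not a consequence of ``$\cC^{\infty,\wcheck h}$ is fine and a convex set is contractible''. Fineness gives vanishing of $H^{\ge1}$ with \emph{abelian} coefficients; triviality of a locally free $\cC^{\infty,\wcheck h}$-module requires vanishing of the nonabelian $H^1(U,GL_n(\cC^{\infty,\wcheck h}))$, which needs a separate argument (for instance via the short exact sequence $1\to 1+t\,M_n(\cC^{\infty,\wcheck h})\to GL_n(\cC^{\infty,\wcheck h})\to GL_n(\cC^{\infty})\to 1$, a logarithm on the contractible kernel, and the classical triviality of smooth bundles over a contractible base). (ii) The convergence of the triangular iteration is not automatic. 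The estimate $\Vert u_l\Vert\le C\bigl(\Vert\omega_l\Vert+\sum_{i=1}^l\Vert\theta_i\Vert\,\Vert u_{l-i}\Vert\bigr)$ compounds constants geometrically, and turning this into membership in $\cA^{\wcheck h}$ at a level strictly above $h$ requires a renewal-type argument with carefully chosen intermediate levels, using both the submultiplicativity of $\Vert t^j\Vert_h$ and the decay supplied by \eqref{1g} and \eqref{2c}; the single phrase ``absorbed by the same use of \eqref{1g}'' does not close this. The paper's patching argument sidesteps both issues, which is presumably why it was chosen.
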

We postpone the proof of this proposition to the next section. 
However, let us record the following lemma which explains the meaning of condition~\eqref{h condition} as well as our initial requirement~ \eqref{3j}. These conditions are chosen precisely so that the following lemma holds. Without plurisubharmonicity vanishing of cohomology would not hold in general. 
\BEL
\label{lem11}
If $h$ satisfies condition~\eqref{h condition} then the function 
\bea
W_j : z \mapsto - 2 \log \Vert t^j \Vert_{h(r)}  \ , \ \ z \in Q,  \label{1n}
\eea
is plurisubharmonic  for every  $j$. 
\ENL
\begin{proof}
Recall the notation from~\eqref{Nj}.
We first remark that \eqref{3j} implies, by the positivity of $N_j$ and 
a direct calculation of the derivatives,  that
\bea
\frac{(N_j'(h))^2  }{N_j(h)} -  N_j''(h) - \frac{1}{h}  N_j' (h)  \geq 0 
\label{3c}
\eea

We write $T_j := N_j \circ h$; then 
\beqn
W_j (z) = - 2 \log T_j(r) ,  
\eeqn
and the plurisubharmonicity \eqref{1n} follows, if we show that 
$- \frac{d^2}{  dr^2} \log T_j(r) \geq 0$,   for all $r \in \bbR^+$ , 
$ j \in \bbN$. (The composite of a convex, increasing function and a plurisubharmonic  function
is plurisubharmonic, see, for example, \cite[ Prop. 2.2.6.]{Kr})
We have by \eqref{3c}
\eqn
& & -  \frac{d^2 \log T_j}{dr^2} = 
\frac{ \big( T_j'  \big)^2 }{(T_j)^2} - \frac{T_j''}{T_j}
\roweq
 \frac{1}{ N_j } \bigg( 
\frac{( N_j'  )^2 (h')^2 }{ N_j  } 
- (N_j'' ) (h')^2
 -    (N_j' )  h'' \bigg)
 \roweq
 \frac{1}{ N_j } \bigg(  (h')^2
\Big(  
\frac{( N_j'  )^2  }{ N_j } 
- N_j'' 
\Big) 
 -    (N_j' )  h'' \bigg)
\rowgeq
\frac{1}{ N_j } \Big( \frac{  (h')^2 }{h} 
-   h'' \Big)  N_j' 
\eneqn
Since $N_j$ is always positive and increasing, the required positivity of  follows 
from~\eqref{h condition}.
\end{proof}

We will now deduce Theorem~\ref{compact case} from Proposition~\ref{cohomology vanishing}. We will proceed in complete analogy with~\cite[Section 7.2]{H} with the simplification that we work with compact blocks. As was mentioned before, this assumption is not necessary but it suffices for us. The argument in H\"ormander can be repeated word for word. 
Thus, we explain the argument only briefly. We will argue by induction on the dimension of $Q$. In the case when the dimension of $Q$ is zero there is nothing to prove. Let us consider $Q$ of a particular dimension $d$ and let us fiber $Q$ as follows
\beqn
\pi: Q \to [a,b]
\eeqn
The fibers of Q are now compact blocks of dimension $d-1$. Each fiber $\pi^{-1}(e)$ thus satisfies Cartan A. Let us now fix a coherent sheaf $\cF$ in the neighborhood of $Q$ . Thus, there is an open neighborhood $U_e$ of $\pi^{-1}(e)$ and a surjection $(\cA^{\wcheck h}_{U_e})^{\oplus p_e} \to \cF|_{U_e}$. We can choose the neighborhood $U_e$ to be rectangle $Q_e\times [a_e,b_e]$. We now choose a finite sub cover $\{U_i\}_{i\in I}$ of $U_e$ shrinking the $U_e$, if necessary, so that the $U_i$ have the form
\beqn
U_i = \tilde Q \times (a_i,b_i) \ \ \text{with}\ \ \ a_1 < a_2 < b_1 < a_3 < b_2 <\dots a_n <b_{n-1}< b_n\,.
\eeqn
In other words only the consecutive $U_i$ intersect each other. On each overlap $U_i\cap U_{i+1}$ we choose $\alpha_i$ and $\beta_i$ making the diagram below commute:
\beqn
\begin{gathered}
\xymatrix@R=.5pc{(\cA^{\wcheck h}_{U_i\cap U_{i+1}})^{\oplus p_i}\ar@<-.5ex>[dd]_{\alpha_i}\ar[dr]&\\&\cF|_{U_i\cap U_{i+1}}\\(\cA^{\wcheck h}_{U_i\cap U_{i+1}})^{\oplus p_{i+1}}\ar@<-.5ex>[uu]_{\beta_i}\ar[ur]&}
\end{gathered}\,.
\eeqn
We now use the  $\alpha_i$ and $\beta_i$ to construct transition functions, as in~\cite[Section 7.2]{H}. As there are no triple intersections of the $U_i$ the transition functions yield a vector bundle $E$ on $U = \cup U_i$ and a surjection $E\to \cF|_U$. By writing $\cK_1$ for the kernel of the map $E\to \cF|_U$ we obtain an exact sequence 
\begin{equation*}
0 \to \cK_1 \to (\cA^{\wcheck h}_U)^p \to \cF \to 0
\end{equation*}
Thus, making use of proposition~\ref{cohomology vanishing} for $i\geq 1$ we get that 
\begin{equation*}
\oh^i(Q, \cF) \cong \oh^{i+1}(Q,\cK_1)  \,.
\end{equation*}
Repeating this argument for the coherent sheaf $\cK_j$ and always writing $\cK_{j+1}$ for the kernel we get:
\begin{equation*}
\oh^i(Q,\cF) \cong \oh^{i+1}(Q,\cK_1) \cong \dots \cong \oh^{i+p}(Q,\cK_p) \,.
\end{equation*}
When $p\geq 2N$ the right hand side vanishes and Cartan B follows for $Q$. It remains to prove Cartan A for $Q$. 

Let $x\in Q$ and write $i_x: \{x\} \to Q$. Then we have the following exact sequence
\beqn
0 \to \cK_x \to \cF \to (i_x)_*(i_x)^*\cF \to 0\,.
\eeqn
Taking the long exact sequence and using Cartan B we obtain a surjection 
\beqn
 \cF(Q) \to (i_x)_*(i_x)^*\cF=\cF_x/\fm_x\cF_x\,,
\eeqn
where $\fm_x$ is the maximal ideal in $\cA^{\widecheck h}_{X,x}=\cA^{\widecheck {h(x)}}_{X,x}$. As  $(i_x)_*(i_x)^*\cF=\cF_x/\fm_x\cF_x$ is coherent it is finitely generated over $A_{\widecheck {h(x)}}$. This implies that there are finitely many sections of  $\cF(Q)$ which span $(i_x)_*(i_x)^*\cF=\cF_x/\fm_x\cF_x$. By Nakayama's lemma these sections also span  $\cF_x$ over $\cA^{\widecheck h}_{X,x}$. Thus we obtain a map
\beqn
(\cA_Q^{\widecheck h})^{\oplus n_x} \to \cF
\eeqn
which is a surjection on the level of stalks at $x$. Thus, it is a surjection on some open neighborhood $U_x$ of $x$. As $Q$ is compact, it can be covered by a finite number of the $U_x$, say, $U_{x_1}, \dots, U_{x_s}$. Then
\beqn
\oplus_{i=1}^{s} (\cA_Q^{\widecheck h})^{\oplus n_{x_i}} \to \cF
\eeqn
 is the required surjection.

\section{Proof of Proposition~\ref{Dolbeault exactness}}
\label{L2 arguments}

We retain the notation of the previous section. In particular, we denote the coordinates in $\bC^N$ by $z=(z_1, \dots, z_N)$ and the Lebesgue measure on $\bC^N$ by $dx$. As before, $Q$ stands for a compact block. 

\subsection{The structure sheaf case}
We begin by construction the $L^2$-version of the $\bar\del$-complex
\beq
\label{Db}
\cA_Q^{\wcheck h,(0,0)}(Q) \xrightarrow{\bar\del} \cA_Q^{\wcheck h,(0,1)}(Q)   \xrightarrow{\bar\del} \dots \xrightarrow{\bar\del} \cA_Q^{\wcheck h,(0,q)}(Q)  \xrightarrow{\bar\del}
\eeq
 in complete analogy with the classical $L^2$-version of the $\bar\del$-complex. We could as easily work with $(p,q)$-forms, but in what follows we will stick to $(0,q)$-forms to simplify the notation. We will construct a complex, but for an $h(r)$, not $\widecheck{h(r)}$, of the following form:
 \beq
 \label{L2 Dolbeault}
L_{(0,0)}^2(Q ;  H_{h(r)}) \xrightarrow{\bar\del} L_{(0,1)}^2(Q ;  H_{h(r)})   \xrightarrow{\bar\del} \dots \xrightarrow{\bar\del} L_{(0,q)}^2(Q ;  H_{h(r)}) \xrightarrow{\bar\del}\,.
\eeq
The space $L_{(0,q)}^2(Q ;  H_{h(r)})$ is an $L^2$-spaces of forms with values in a varying 
Hilbert spaces $H_{h(r)}$; we will soon define these and also give 
the precise meaning of the vector valued $\bar \partial$ operator.

Let us recall the algebra norms (3.7),
$
\Vert \sum_j a_jt^j \Vert_{h(r)} = \sum_j |a_j| \, \Vert t^j \Vert_{h(r)} 
$ on the spaces $A_{h(r)}$. 
We introduce $L^2$-version of the spaces $A_{h(r)}$ and of the norms as follows:
\beq
\label{3}
\begin{gathered}
H_{h(r)} = \Big\{ a=  \sum_j a_jt^j
\, : \, 
\Vert a \Vert_{2, h(r)}^2  
< \infty \Big\} , \ \ \ \ \mbox{where} 
 \\ 
 \Vert a \Vert_{2, h(r)}^2  = \sum_{j=0}^\infty 
|a_j|^2 \Vert t^j \Vert_{h(r)}^2 .
\end{gathered}
\eeq
We define the space $L^2(Q ;  H_{h(r)})=L_{(0,0)}^2(Q ;  H_{h(r)})$ as a completion of the space of smooth functions $\cC^{\infty,{h}}_{Q}$ (note that there is no hat in this formula). First, we set
\beq
\label{4}
\Vert f; L^2(Q; H_{h(r)}) \Vert^2  = \int_Q \Vert f(x)\Vert^2_{{2, h(r)}} dx
\eeq
for $\cC^{\infty,{h}}_{Q}$ and then we complete it to $L^2(Q ;  H_{h(r)})$ with respect to the norm above. 
As we are working on $\bC^N$, the bundles of forms $\Omega^{0,q}$ are naturally trivialized by choosing the $ d\bar z^\alpha$ as a basis. Thus, we can define a norm on $\omega\in\cA_Q^{h,(0,q)}(Q)$ by the formula
 \beq
\Vert \omega; L_{0,q}^2(Q; H_{h(r)}) \Vert^2  = \sum_{\alpha}  \Vert f_{\alpha}; L^2(Q; H_{h(r)}) \Vert^2
\eeq
for $\omega = \sum_{|\alpha|=q} f_{\alpha}   d\bar z^\alpha $. We complete $\cA_Q^{h,(0,q)}(Q)$ into the corresponding $L^2$-space\linebreak $L_{(0,q)}^2(Q; H_{h(r)})$ with respect to the norm above.

To define the $\bar \partial$-operator let us rephrase our discussion in the language of  \cite[Chapter 4]{H}. We write $\omega= \sum \omega_j t^j$ and consider a particular $j$. Recall that by  Lemma~\ref{lem11} $W_j (z) = - 2 \log \Vert t^j \Vert_{h(r)}$ is plurisubharmonic and we can consider $W_j (z)$ as a plurisubharmonic weight. As in \cite{H}, we write $L^2_{(0,q)} (Q , W_j) $ for forms which are square integrable with respect to the measure $e^{-W_j (z) } dx$. 
Then, by~\cite{H}, the operator $\bar \partial$ is a closed, densely defined 
operator on the weighted $L^2$-space  $L^2_{(0,q)} (Q , W_j) $ where differentiation is understood in the generalized sense. Denoting by  $\cD_j \subset L^2_{(0,q)} (Q , W_j) $ the domain of $\bar 
\partial$, we define the 
vector valued  $\bar \partial$-operator in the space $L_{(0,q)}^2(Q; H_{h(r)})$ 
coordinate wise by setting its domain to be
\bea
D(\bar \partial) = \Big\{ u = \sum_j u_jt^j \, : \,
u_j \in \cD_j \ , \ \sum_j \Vert u_j ;  L^2_{(0,q)} (Q , W_j) \Vert^2 < \infty
\Big\} . \label{94}
\eea
It is straightforward to see that $\bar \partial$ becomes a densely defined and 
closed operator. These definitions give us the complex~ \eqref{L2 Dolbeault}.

\BEL
\label{lem77}
Let 
$\omega\in L_{(0,q+1)}^2(Q ;  H_{h(r)})$ with $ \bar \partial \omega= 0$. Then the
problem $\bar \partial u = \omega$ has a solution such that $u$ belongs to $L_{(0,q)}^2(Q ;  H_{h(r)})$, and we have
\beqn
\Vert u; L_{(0,q)}^2(Q; H_{h(r)}) \Vert \leq 
c \Vert \omega; L_{(0,q+1)}^2(Q; H_{h(r)}) \Vert .
\eeqn
\ENL

\begin{proof} We write
$$
\omega \ = \ \sum_j \omega_j t^j 
$$
where the $\omega_j$ are now usual $L^2$-forms of type $(0,q)$ on $Q$. 
We have $\bar \partial \omega_j = 0$ for every $j$. We now solve this $\bar \partial$-problem
in a controlled way. Recall~\eqref{1n} where we defined the plurisubharmonic functions $W_j (z) 
= - 2 \log \Vert t^j \Vert_{h(r)}$.  
Theorem 4.4.2 of [H] implies that for every $j$ the equation
\beqn
\bar \partial u_j = \omega_j   \label{7}
\eeqn
has a solution $u_j  = \sum_{\alpha} u_{\alpha,j}$ with a very specific bound:
\bea
& & \sum_{\alpha} \int_Q | u_{\alpha,j} |^2 e^{-W_j(z)} (1+r^2)^{-2}  dx
\leq  
\sum_{\alpha} \int_Q | f_{\alpha,j} |^2 e^{-W_j(z)}   dx ;
\label{8a}
\eea
here we have written $\omega_j=\sum_{\alpha} f_{\alpha,j}$.

Since the  $Q$ is a compact block, the weight $(1+r^2)^{-2}$ is 
bounded from below by a constant $c^{-1} >0$ independent of $j$, and \eqref{8a} 
implies
\beq
\begin{gathered}
\sum_{\alpha} \int_Q | u_{\alpha,j} |^2 e^{-W_j(z)}  dx \leq 
c \sum_{\alpha} \int_Q | u_{\alpha,j} |^2 e^{-W_j(z)} (1+r^2)^{-2}  dx\leq
\\
c \sum_{\alpha} \int_Q | f_{\alpha,j} |^2 e^{-W_j(z)}   dx\,.
\end{gathered}  
\label{8}
\eeq

We set 
\beqn
u= \sum_j u_j t^j \,.
\eeqn
By  \eqref{8} and the definitions of the norms in~\eqref{3} and~\eqref{4},
\eqn
& & \Vert u ; L_{(0,q)}^2(Q;H_{h(r)})\Vert^2
= \sum_{j} \sum_{\alpha} 
\int_Q | u_{\alpha,j} |^2 \Vert t^j \Vert_{h(r)}^2  dx
\roweq 
\sum_{j} \sum_{\alpha} 
\int_Q | u_{\alpha,j} |^2 e^{-W_j(z)}  dx
\leq c\sum_{j} \sum_{\alpha} 
\int_Q | f_{\alpha,j} |^2 e^{-W_j(z)}  dx
\roweq  
c \sum_{j} \sum_{\alpha} 
\int_Q | f_{\alpha,j} |^2 \Vert t^j \Vert_{h(r)}^2  dx
 = 
c \Vert f ; L_{(0,q+1)}(Q;H_{h(r)})\Vert . \label{10}
\eneqn
Thus we have constructed a solution to the $\bar\del$-problem.

\end{proof}

We have now proved the exactness of~\eqref{L2 Dolbeault}. We will use it to prove the required exactness of~\eqref{Db}

Let $\omega\in\cA_Q^{\wcheck h,(0,q)}(Q)$ such that $\bar \partial \omega = 0$. We write, as usual, $\omega = \sum_{|\alpha|=q} f_{\alpha}   d\bar z^\alpha $. 
Because of compactness of $Q$, we have
$$
\cA_Q^{\check h(r)}(Q)= \varinjlim_{m \to \infty} \cA_{Q}^{(1+1/m)h(r)}(Q).  
$$
By compactness of $Q$ and continuity of $\omega$, there exist an $m$
such that  $f_{\alpha}  (z) \in A_{(1+1/m) h(r)}$ for all $\alpha$ and all$z \in Q$. 
Moreover, obviously, for any $(a_j)_{j\in \bbN}$, $k$,
\bea
\sum_j |a_j| \Vert t^j \Vert_k
\geq  \Big( \sum_j |a_j|^2 \Vert t^j \Vert_{k}^2 \Big)^{1/2}\,.
\label{1a}
\eea
Thus,  we  have the continuous embedding $A_{(1+1/m) h(r)}
 \hookrightarrow H_{(1+1/m) h(r)}$

Therefore every  $f_{\alpha}$ is a continuous and thus
bounded function $Q \to H_{(1+1/m) h(r)}$, in particular, an element of 
$L^2(Q;  H_{(1+1/m) h(r)})$. Thus, $\omega \in L_{(0,q+1)}^2(Q ;  H_{(1+1/m) h(r)})$
and by Lemma \ref{lem77} we can find an $u\in L_{(0,q)}^2(Q ;  H_{(1+1/m) h(r)})$ with $\bar \partial u = \omega$.

Writing $u= \sum u_j t^j$ and observing that $\bar \partial$-equation is an elliptic PDE with constant coefficients we conclude that the component functions $u_j$ are smooth functions. 

We have an embedding 
\beq
\label{embedding}
H_{(1+1/m) h(r)} \hookrightarrow A_{(1+1/(m+1)) h(r)}\,.
\eeq
To see this we make use of our controlled nuclearity assumption~\eqref{1g}. It implies, in particular, that there is a constant $K$ such that
$$
 \Vert t^j \Vert_{(1+1/(m+1)) h(r)} \leq K j^{-1} 
 \Vert t^j \Vert_{(1+1/m) h(r)}\qquad \text{for all $j$}\,.
$$
Making use of this inequality and the Cauchy-Schwartz inequality
we conclude that
\eqn
& & \Vert \sum_j a_j t^j \Vert_{(1+1/(m+1)) h(r)} = \sum_j |a_j| \Vert t^j \Vert_{(1+1/(m+1)) h(r)}
\leq  \sum_j |a_j|   K j^{-1} 
\Vert t^j \Vert_{(1+1/m) h(r)}
\rowleq 
 K\Big( \sum_j |a_j|^2 \Vert t^j \Vert_{(1+1/m) h(r)}^2 \Big)^{1/2}.
\eneqn
This inequality gives us the embedding~\eqref{embedding}. Thus, we see that $u(z)\in A_{h_1(r)}$ for all $z\in Q$ with $h_1= (1+1/(m+1)) h$. Finally, the operators $\frac{\partial^\alpha}{\partial x^{\alpha}}$ commute with the operator $\bar\del$. Thus,  $\bar \del\frac{\partial^\alpha u}{\partial x^{\alpha}} = \frac{\partial^\alpha \omega}{\partial x^{\alpha}}$ and so we conclude that $\frac{\partial^\alpha u}{\partial x^{\alpha}}(z)\in A_{h_1(r)}$ for all $z\in Q$. Thus, we see that  $u\in\cA_Q^{\wcheck h,(0,q)}(Q)$ proving the exactness of~\eqref{Db} in degrees $q\geq 1$. 

\subsection{The vector bundle case}
Let us now turn to the case of  a vector bundle E and write $n$ for its rank. The argument in this case will be bit more involved than in the case of the structure sheaf $\cA_Q^{\check h}$. In our case it is easy to write down an inner product on the sections of 
the $H_{h(r)}$-valued sections of $\cE\otimes \Omega^{0,q}$ explicitly rather than choosing a Hermitian metric, so we proceed in this manner. This also makes it easier to reduce arguments to component functions as we did in the first part of this section.

Recall our $\bar\del$-complex
\beq
\label{tm}
\cE_Q^{\wcheck h,(0,0)}(Q) \xrightarrow{\bar\del} \cE_Q^{\wcheck h,(0,1)}(Q)  \xrightarrow{\bar\del} \dots \xrightarrow{\bar\del} \cE_Q^{\wcheck h,(0,q)}(Q) \xrightarrow{\bar\del}\,.
\eeq
Just as in the case of $\cA_Q^{\check h(r)}$ we will construct an analogous $L^2$-complex 
 \beq
 \label{L2 Dolbeault vb}
L_{(0,0)}^2(Q , \cE;  H_{h(r)}) \xrightarrow{\bar\del} L_{(0,1)}^2(Q,\cE ;  H_{h(r)})   \xrightarrow{\bar\del} \dots \xrightarrow{\bar\del} L_{(0,q)}^2(Q,\cE ;  H_{h(r)}) \xrightarrow{\bar\del}\,.
\eeq
We will first prove that the $L^2$-complex is exact and then deduce from that, just as before, the exactness of~\eqref{tm}.

The bundle $E$ is defined in an open neighborhood of $Q$. We choose finite open cover $Q'_i$ of $Q$ by {\it open} blocks such that $E|_{Q'_i}$ is trivial. We now choose compact blocks $Q_i\subset Q'_i$ such that the interiors $\mathring Q_i$ also form an open cover of $Q$. We write $G'_{ij}:Q'_i\cap Q'_j \to GL_n(\cA_{\bC^N}^{\check h}(Q'_i\cap Q'_j))$ for the transition matrices with respect to the cover $Q'_i$. Let us write $G_{ij}=G'_{ij}|_{Q_i\cap Q_j}$. As the $Q_i\cap Q_j$ are compact there is an $h_1>h$ such that $G_{ij}:Q_i\cap Q_j \to GL_n(\cA_{\bC^N}^{h_1}(Q_i\cap Q_j))$, for all $i,j$. Conversely, these transition functions give rise to vector bundle $E_1$ such that $E = \cA_{\bC^N}^{\widecheck h}\otimes_{\cA_{\bC^N}^{h_1}}E_1$ on some neighborhood of $Q$. For example, we can choose $h_1 = (1+1/m)h$ and then, of course, $h_1$ also satisfies condition~\eqref{h condition}. 
We will now work with $E_1$ and the function $h_1$.

We can view $L^2$ sections of $\cE_1$ as a system  of 
$n$-tuples $u_i = (u_{i,k})_{k=1}^n$, $i = 1, \ldots,I$, of 
functions $u_{i,k} \in L^2 (Q_i ;  H_{h_1(r)})$ which satisfy
\bea
u_i = G_{ij} u_j   \ \ \ \mbox{on} \ Q_i \cap Q_j . \label{X1.5}
\eea 
As before we trivialize the bundles of forms by our choice of coordinates on $\bC^N$. Thus we can think of 
  smooth $(0,q)$-forms  of $\cE_1$ as systems of $n$-tuples  $u_i = (u_{i,k})_{k=1}^n$ of $(0,q)$-forms $u_{ik} \in C_{(0,q)}^\infty (Q_i; H_{h_1(r)})$, with
  \beqn
u_{ik} = \sum_{ |\alpha|=q} u_{i,k, \alpha} d\bar z^\alpha ,
\eeqn
where $ u_{i,k, \alpha}$ is a $C^\infty$-function with values in $H_{h(r)}$ and where the $u_i$ satisfy \eqref{X1.5}. 

To define the space $L_{(0,q)}^2(Q,\cE_1 ;  H_{h_1(r)})$ of $L^2$-forms
we consider the Hilbert space
\bea
 \cX(q)  =   \prod_{i=1}^{I} \ \prod_{k=1}^n
L^2_{(0,q)}( Q_i ;  H_{h_1(r)}), \label{X1.8}
\eea
We then set
\beq
\label{X1.11}
\begin{gathered}
L_{(0,q)}^2(Q,\cE_1 ;  H_{h_1(r)}) \ = \ \cH(q)=
\\
\Big\{  u  =  (u_{i})_{i} = (u_{ik})_{i,k}
\in   \cX(q) \mid
u_i = G_{ij} u_j \mbox{ for all } i,j \Big\} \,.
\end{gathered}
\eeq

The norm $\Vert u \Vert_q$ on the  space $\cX(q)$ is associated to the
natural inner product
\bea
(g | v )_{q}  = \sum_{i} (g_{i} | v_{i} )_{q,i} 
\label{X1.12}
\eea
where $g = (g_{ik})_{ik}$, $v=(v_{ik})_{ik}$, and 
$( \cdot  | \cdot )_{q,i}$ 
is the inner product of the Hilbert space $\prod_{k=1}^n 
L_{(0,q)}^2(Q_i ;H_{h_1(r)})$.

We define the $\bar \partial$-operator  on $\cH(q)=L_{(0,q)}^2(Q,\cE_1 ;  H_{h_1(r)})$ as a restriction of the $\bar \partial$-operator on $\cX(q)$ which  we define component wise. Since the coefficients $G_{ij}$ are analytic, we have from \eqref{X1.5}
\bea
\bar \partial u_i = G_{ij} \bar \partial u_j. \label{X1.7}
\eea
and hence we obtain  $\bar \partial :L_{(0,q)}^2(Q,\cE_1 ;  H_{h_1(r)})\to L_{(0,q+1)}^2(Q,\cE_1 ;  H_{h_1(r)})$ and thus our complex
 \beq
 \label{L2 Dolbeault'}
L_{(0,0)}^2(Q , \cE_1;  H_{h_1(r)}) \xrightarrow{\bar\del} L_{(0,1)}^2(Q,\cE_1 ;  H_{h_1(r)})   \xrightarrow{\bar\del} \dots \xrightarrow{\bar\del} L_{(0,q)}^2(Q,\cE_1 ;  H_{h_1(r)}) \xrightarrow{\bar\del}\,.
\eeq
We will next prove the exactness of this complex, i.e., 
\begin{prop}
\label{vb L2 exactness}
Let 
$\omega\in L_{(0,q+1)}^2(Q,\cE_1 ;  H_{h_1(r)})$ with $ \bar \partial \omega= 0$. Then there exists $u\in L_{(0,q)}^2(Q,\cE_1 ;  H_{h_1(r)})$ such that $\bar \partial u = \omega$. 
\end{prop}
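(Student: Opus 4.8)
The plan is to prove the exactness of \eqref{L2 Dolbeault'} by a direct H\"ormander-type $L^2$-estimate for $\bar\partial$ with values in the bundle $E_1$, rather than by patching together the chartwise solutions of Lemma~\ref{lem77}: such a patching does not close up, since the obstruction to gluing the local solutions into a section over $Q$ is exactly a \v{C}ech cohomology class of $E_1$ on the cover $\{Q_i\}$, i.e.\ an instance of the very statement to be proved. Instead I would realize the problem as an honest weighted $\bar\partial$-problem for a Hilbert bundle and run H\"ormander's machinery with a plurisubharmonic weight chosen large enough to absorb the curvature of $E_1$.

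\emph{Step 1 (an equivalent metric norm).} Fix a smooth partition of unity $\{\chi_i\}_{i=1}^{I}$ with $\mathrm{supp}\,\chi_i\subset\mathring Q_i$ and $\sum_i\chi_i\equiv 1$ near $Q$, and put on $E_1$ the Hermitian metric $h_{E_1}=\sum_i\chi_i\langle\,\cdot\,,\,\cdot\,\rangle_i$, where $\langle\,\cdot\,,\,\cdot\,\rangle_i$ is the standard fibre metric read off in the $i$-th trivialization. Using $0\le\chi_i\le 1$, $\sum_i\chi_i=1$, the relations $u_i=G_{ij}u_j$ on $Q_i\cap Q_j$, and $\sup_{Q_i\cap Q_j}(\Vert G_{ij}\Vert+\Vert G_{ij}^{-1}\Vert)<\infty$ (compactness of the overlaps), one checks that the $L^2$-norm on $(0,q)$-forms defined by $h_{E_1}$ together with the varying Hilbert norm $\Vert\,\cdot\,\Vert_{2,h_1(r)}$ on $\hat A$ is equivalent to the norm $\Vert\,\cdot\,\Vert_q$ of \eqref{X1.12}, with constants depending only on the cover and the $G_{ij}$. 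Hence it suffices to solve $\bar\partial u=\omega$ on the open block $\mathring Q$ with a bound in this metric norm, the boundary of $Q$ being a null set.

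\emph{Step 2 (curvature and H\"ormander's estimate).} Work on $\mathring Q$, which is convex, hence pseudoconvex, and on which $E_1$, the metric $h_{E_1}$ and the plurisubharmonic functions $W_j:=-2\log\Vert t^j\Vert_{h_1(r)}$ (the analogues of \eqref{1n} for $h_1$, plurisubharmonic by Lemma~\ref{lem11} since $h_1$ was chosen so as to satisfy \eqref{h condition}) are all defined. Consider the Hilbert bundle $\cV:=E_1\,\hat\otimes\,\big(\hat A,\Vert\,\cdot\,\Vert_{2,h_1(r)}\big)$ over $\mathring Q$: the metric on its second factor is diagonal with entries $\Vert t^j\Vert^2_{h_1(r)}=e^{-W_j}$, so its Chern curvature is the diagonal form with nonnegative entries $i\partial\bar\partial W_j$, i.e.\ this factor is Nakano-semipositive. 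The Chern curvature $\Theta^{E_1}$ of $h_{E_1}$ has analytic, hence $A_{h_1(r)}$-valued, coefficients, and since multiplication by $a\in A_{h_1(r)}$ satisfies $\Vert ab\Vert_{2,h_1(r)}\le\Vert a\Vert_{h_1(r)}\Vert b\Vert_{2,h_1(r)}$ (using $\Vert t^{j+l}\Vert_{h}\le\Vert t^j\Vert_{h}\Vert t^l\Vert_{h}$ and Young's convolution inequality), $\Theta^{E_1}$ acts on the fibres $H_{h_1(r)}$ as an operator-valued $(1,1)$-form whose norm is bounded on the compact $Q$ by some $C'$. Twisting by the strictly plurisubharmonic weight $e^{-C\vert z\vert^2}$ with $C>C'$ makes the Chern curvature of $\cV$ Nakano-nonnegative on $\mathring Q$, so the Hilbert-bundle version of \cite[Theorem~4.4.2]{H} (used exactly as in the proof of Lemma~\ref{lem77}) produces a solution $u$ of $\bar\partial u=\omega$ on $\mathring Q$ with $\int_{\mathring Q}\vert u\vert^2_{h_{E_1}}\,e^{-C\vert z\vert^2}(1+\vert z\vert^2)^{-2}\,dx\le c\int_{\mathring Q}\vert\omega\vert^2_{h_{E_1}}\,e^{-C\vert z\vert^2}\,dx$. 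Since these weights are bounded above and below on the compact $Q$, we get $u\in\cH(q)$ with $\bar\partial u=\omega$, which is the assertion of the proposition; as in the structure-sheaf case this then also gives the exactness of \eqref{tm}, using componentwise elliptic regularity for $\bar\partial$ and the controlled-nuclearity embedding \eqref{embedding}.

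The step I expect to be the main obstacle is the curvature bookkeeping in Step 2: checking carefully that $h_{E_1}\,\hat\otimes\,\Vert\,\cdot\,\Vert_{2,h_1(r)}$ is a bona fide smooth Hermitian metric on a Hilbert bundle whose curvature operator is uniformly bounded over $Q$ — equivalently, that multiplication by the power-series-valued curvature coefficients of $E_1$ is bounded uniformly across the internal grading of $\hat A$ — and that the Hilbert-bundle H\"ormander estimate genuinely applies with this non-classical family $\{W_j\}$ of weights, so that a single weight $C\vert z\vert^2$ dominates all the curvature at once. The remaining ingredients — the partition-of-unity comparison of norms, the pseudoconvexity of $\mathring Q$, the restriction back to $Q$, and the deduction of \eqref{tm} — are routine and run parallel to the structure-sheaf case already treated.
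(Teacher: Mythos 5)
Your proposal takes a genuinely different route from the paper's. The paper does \emph{not} attempt to patch the chartwise solutions, nor does it run a twisted H\"ormander estimate on a Hilbert bundle. Instead it works entirely with the component-wise $\bar\partial$-operators $T$ and $S$ on the \emph{product} space $\cX(q)=\prod_{i,k} L^2_{(0,q)}(Q_i;H_{h_1(r)})$, where Lemma~\ref{lem77} applies one Banach-factor at a time and therefore yields $\Vert g\Vert\le C\Vert T^*g\Vert$ for $g\in\ker S\cap D(T^*)$ with no extra curvature hypotheses. The whole content of the proof is then a functional-analysis reduction: writing $\cH(q)=\ker L$ with $L$ the \v Cech coboundary, the paper computes $L^*$ explicitly, observes that the commutator of $T$ with $L^*$ is a \emph{bounded} operator $R$ (because $\bar\partial(\,^t\bar G_{ji})$ is multiplication by a smooth $A_{h_1}$-valued function, hence bounded on $H_{h_1(r)}$), and from this concludes $D(\tilde T^*)=D(T^*)\cap\cH(q+1)$ and $\tilde T^*=T^*$ there. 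Lemma~\ref{lem88} then transfers the surjectivity of $T$ onto $\ker S$ to the surjectivity of $\tilde T$ onto $\ker\tilde S$. No Hilbert-bundle $\bar\partial$-estimate is ever invoked.

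The gap in your argument is exactly the one you flag, and it is a real gap, not a routine check. You invoke ``the Hilbert-bundle version of \cite[Theorem~4.4.2]{H}'' for the bundle $\cV=E_1\hat\otimes(\hat A,\Vert\cdot\Vert_{2,h_1(r)})$, but no such statement is in \cite{H}, and the situation is substantially outside the standard finite-rank or trivial-Hilbert-bundle settings: the fibre is an infinite-dimensional Hilbert space whose inner product varies with $z$ (the diagonal weights $e^{-W_j(z)}$), and the transition functions $G_{ij}$ are genuinely $A_{h_1}$-valued, so they \emph{mix} the internal grading by powers of $t$ --- the diagonal trick that makes Lemma~\ref{lem77} work (solve $\bar\partial u_j=\omega_j$ with scalar weight $W_j$ and sum) is no longer available once $E_1$ is nontrivial. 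To make your route rigorous one would have to set up and prove a Nakano-type $L^2$-existence theorem for $\bar\partial$ on Hilbert bundles with this kind of non-classical metric, verify smoothness and curvature-boundedness for $h_{E_1}\hat\otimes\Vert\cdot\Vert_{2,h_1(r)}$, and check that the resulting domain of $\bar\partial$ agrees with the one in \eqref{X1.19}. That is a considerably larger undertaking than the paper's argument, which deliberately sidesteps all of it by reducing to the already-proven scalar case on $\cX(q)$ plus a bounded-commutator observation. Your Step~1 (equivalence of the partition-of-unity metric norm with \eqref{X1.12} via compactness of the overlaps and boundedness of $G_{ij}$, $G_{ij}^{-1}$) is correct but, absent Step~2, it does not carry the proof.
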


Let us pause to argue that this proposition implies the vector bundle part of Proposition~\ref{Dolbeault exactness}. Let $\omega\in\cE_Q^{\wcheck h,(0,q+1)}(Q)$ such that $\bar\del\omega=0$. By compactness of $Q$ there exists an $m$ such that $\omega$ is a smooth $(0,q)$ form of $\cE_1$ with $h_1 = (1+1/m)h$. Thus, arguing just as in the trivial bundle case, $\omega$ can be viewed as an element in $L_{(0,q+1)}^2(Q,\cE_1 ;  H_{h_1(r)})$ and by the above lemma we can produce a $u$ such that  $ \bar \partial \omega= 0$ with all the component functions smooth. Continuing to argue as we did earlier in this section we conclude that $u\in \cE_Q^{\wcheck h,(0,q)}(Q)$. Thus the proof of Proposition~\ref{Dolbeault exactness} is complete once we establish the proposition above.

We will now start preparations for the proof of Proposition~\ref{vb L2 exactness}.
For the purposes of the rest of this section we denote, as in \cite{H},  the $\bar \partial$-operator   on $\cX(q)$  by $T$ and the $\bar \partial$-operator on $\cX(q+1)$ by $S$.  Then
\bea
T : \cX(q)   \cap D(T)
\to \cX(q+1) 
, \label{X1.15}
\eea 
where its domain of definition is 
\beq
\begin{aligned}
D(T) &:= &\Big\{( u_{ik})_k = 
\, : \, u_{ik} \in  L^2_{(0,q)}( Q_i ;  H_{h_1(r)}) \cap D(\bar \partial)
 \\
& &  \big\Vert (\bar \partial u_{ik}) ;  L^2_{(0,q+1)}( Q_i ;  H_{h_1(r)}) \big\Vert
< \infty \ \forall k \Big\} ;  
\end{aligned}
 \label{X1.19}
\eeq
recall that  $D(\bar \partial) \subset  L^2_{(0,q)}( Q_i ;  H_{h_1(r)})$
is defined as in \eqref{94}.
 In the same way we have
\bea
S : \cX(q+1)   \cap D(S)
\to \cX(q+2 ) \,.
 \label{X1.16}
\eea 
 Since the Hilbert spaces here are finite Cartesian products, the
next claim follows from the corresponding properties of the component operators.
\BEL
\label{lem87}
Both operators $T$ and $S$ are densely defined and closed. 
\ENL
As a consequence, the adjoint operator 
$T^* :  \cX(q +1 )   \cap D(T^* ) \to \cX(q ) $ and its domain 
can be defined in the standard way. 

Recall that we have written $\cH(q)$ for  $L_{(0,q)}^2(Q,\cE_1 ;  H_{h_1(r)})$. We write 
 $\tilde T: \cH(q)\cap D(T) \to \cH(q+1)$ for the restriction of $T$ and  
$\tilde S:  \cH(q+1)\cap D(S) \to  \cH(q+2)$ for the restriction of $S$. To analyze the operators $\tilde T$ and $\tilde S$
let us write 
\beqn
 \cX^2(q)  =   \prod_{i<j} \ \prod_{k=1}^n 
L^2_{(0,q)}( Q_i\cap Q_j ;  H_{h_1(r)})\,.
\eeqn
Let us introduce the following notation, $(u_i,u_j)$ stands for an entry in $\cX(q)$ which is non-zero only in positions $i,j$ where the entries are as indicated. We have an exact sequence
\beqn
0\to \cH(q) \to  \cX(q)   \xrightarrow{L}  \cX^2(q)
\eeqn
where $L$ is given by 
\beqn
L(u_i,u_j)|_{Q_{i'}\cap Q_{j'}}= \begin{cases}
    0&  \text{if $(i',j')\neq (i,j)$},\\
   u_j- G_{ji}u_i &  \text{if $(i',j')=(i,j)$}.
\end{cases} 
\eeqn
The expression $u_j- G_{ji}u_i$ is to be interpreted as first restricting $u_i$ and $u_j$ to $Q_i\cap Q_j$. The orthogonal complement $\cH(q)^\perp$ is then given as the image of the adjoint $L^*$. An easy calculation shows that 
\beqn
L^*(u_{ij}) = (-^t\bar G_{ji}u_{ij}, u_{ij})\,.
\eeqn
Were we use same notation as above and we interpret the $u_{ij}$ and the $-^t\bar G_{ji}u_{ij}$ as elements in $L^2_{(0,q)}(  Q_j ;  H_{h_1(r)})$ and $L^2_{(0,q)}( Q_i;  H_{h_1(r)})$ using the inclusions (given by extension by zero) $L^2_{(0,q)}( Q_i\cap Q_j ;  H_{h_1(r)})\subset L^2_{(0,q)}(  Q_j ;  H_{h_1(r)})$ and  $L^2_{(0,q)}( Q_i\cap Q_j ;  H_{h_1(r)})\subset L^2_{(0,q)}(  Q_i ;  H_{h_1(r)})$, respectively. 

We now have
\BEL
\label{lem97}
The spaces $\cH(q)$ and $\cH(q)^\perp \subset \cX(q)$ contain  dense subspaces 
$\cH^\infty(q)$ and $\cH^{\infty,\perp}(q)$, respectively,  consisting of 
$C^\infty$-functions.  
\ENL
\begin{proof} The space $\cH^\infty(q)$  is of course just the space of smooth sections of the bundle $\cE_1$ and as such it is of course dense in $\cH(q)$. 
As was pointed above the $\cH(q)^\perp$ is a finite linear combination of elements of the form $L^*(u_{ij}) = (-^t\bar G_{ji}u_{ij}, u_{ij})$. To construct $\cH^{\infty,\perp}(q)$ we take linear combinations of the $L^*(u_{ij})$ where $u_{ij}$ are smooth functions on $Q$ with support in $Q_i\cap Q_j$. The $L^*(u_{ij})$ are smooth and $\cH^{\infty,\perp}(q)$ is dense by construction. 
\end{proof}
As the operators $\tilde T$ and  $\tilde S$ are clearly closed, the lemma above immediately implies that
\beq
\label{lem92}
\text{The operators $\tilde T$ and  $\tilde S$ are closed and densely defined}\,.
\eeq

{\bf Proof of Proposition~\ref{vb L2 exactness}} In the language we just introduce we are to show that if $\omega$ is in the kernel of $\tilde S$ then it is in the image of $\tilde T$. We make use of the following well-known lemma, see~\cite[Lemma 4.1.1]{H}, for example, 
\BEL
\label{lem88}
Let $\cT$ be a linear, closed operator from a dense subspace of 
$H_1$ into $H_2$, where $H_j$ are  Hilbert spaces, and let $F\subset H_2$
be a closed subspace such that the range of $\cT$ satisfies $R_{\cT} \subset F$. Then, 
$R_\cT =F$ if and only if
$ \Vert g \Vert_{H_2} \leq C \Vert \cT^*g \Vert_{H_1} $
for all $g \in F \cap D(\cT^* )$. 
\ENL

Applying Lemma~\ref{lem77} component-wise implies that the range of the  
operator  $T $ is {\rm ker}$\, S$, thus, by Lemma \ref{lem88},
\beqn
\Vert g \Vert \leq C \Vert T^*g \Vert \label{X1.23}
\eeqn
for all $g \in {\rm ker}\, S \cap D(T^* )$ for some $C$. Thus, in order to prove Proposition~\ref{vb L2 exactness}, it suffices to show that
\beqn
\Vert g \Vert \leq C \Vert \tilde T^*g \Vert \qquad\text{for all $g \in {\rm ker} \, \tilde S \cap D(\tilde T^* )$\ \  for some $C$}\,.
\eeqn
To do so, in view of 
\eqref{X1.23},  it suffices to show that $D(\tilde T^* ) = D(T^* ) \cap  \cH(q+1)$ and that 
$\tilde T^*  = T^*$ on $D(\tilde T^* )$. 
By definition, $D(\tilde T^* )$ consists of functions $g \in  \cH(q+1)$ 
for which there exists a constant $C =C(g)> 0$ such that 
\beqn
|(\tilde Tv | g )_{q+1} | \leq C \Vert v \Vert 
\eeqn
for all $v \in  D(\tilde T) \subset \cH(q) $.
Thus, we are reduced to showing that if $g$ is above then we also have 
\beqn
|(Tv | g )_{q+1} | \leq C' \Vert v \Vert  
\eeqn
for all $v \in  D(T) \subset \cX(q) $ for some $C'$.

Note that, of course, $T$ preserves $\cH(q)$, but it does not preserve $\cH(q)^\perp$. However, as we will show next, it preserves $\cH(q)^\perp$ up to a bounded operator. 
Let us now consider $L^*(u_{ij}) = (-^t\bar G_{ji}u_{ij}, u_{ij})\in \cH(q)^\perp$. Then we see that
\beqn
\begin{gathered}
TL^*(u_{ij}) = (\bar \del( -^t\bar G_{ji}u_{ij}),  \bar \del u_{ij}) = (-^t\bar G_{ji}\bar \del u_{ij}, \bar \del u_{ij})+(( -\bar \del^t\bar G_{ji})u_{ij},0)
\\
= \ L^*T(u_{ij}) +(( -\bar \del\, ^t\!\bar G_{ji})u_{ij}, 0) = L^*T(u_{ij}) + R(u_{ij})\,.
\end{gathered}
\eeqn
where $R: \cX^2(q) \to \cX(q+1)$ is the bounded operator given by $R(u_{ij})= (( -\bar \del\, ^t\!\bar G_{ji})u_{ij}, 0) $. 

Let us now pick $w\in\cH^\infty(q)$ and $w^\perp \in \cH^{\infty,\perp}(q)$. The elements $v$ of the form $v=w+w^\perp$ are dense in $\cX(q)$. Note that we can assume that $w^\perp=L^*u_{ij}$ as it is a linear combination of such expressions. Then,
\beq
\begin{gathered}
 |(T v  |g )_{q+1}|  =   |(T (w+w^\perp) |  g )_{q+1}|  
\leq |(T w |  g )_{q+1}| + |(T w^\perp |  g )_{q+1}|  \leq 
\\
C  \Vert w \Vert  + |(T L^*u_{ij}|  g )_{q+1}| =   C  \Vert w \Vert  + |( L^*T u_{ij}|  g )_{q+1}+( R u_{ij}|  g )_{q+1}| =
\\
C  \Vert w \Vert  + |( R u_{ij}|  g )_{q+1}| \leq C  \Vert w \Vert  
+ \Vert R \Vert \, \Vert u_{ij} \Vert  \,  \Vert g \Vert 
\leq  (C+ \Vert R \Vert  \Vert  \,  \Vert g \Vert )\Vert  v \Vert , 
\end{gathered}
\label{X1.34} 
\eeq
because $w^\perp = (-^t\bar G_{ji}u_{ij}, u_{ij})$ and hence $\Vert u_{ij}\Vert \leq \Vert w^\perp \Vert$.
Thus, the linear map 
$v \mapsto (T v  |g )_{q+1}$  
extends to $D(T)$ as a bounded map and  \eqref{X1.34} holds for all $v \in D(T)$. 
In other words,  $g \in D(T^*)$ and therefore  $D(\tilde T^* ) \subset 
D(T^* ) \cap  \cH(q+1)$. Also, the identity $(v |T^* g )_{q} =
(v |\tilde T^* g )_{q} $ holds for  all $v \in D(\tilde T)$ and 
$g \in D(\tilde T^* )$, and hence $T^* = \tilde T^* $ on $D(\tilde T^*)$.
This completes the  proof of Proposition  \ref{vb L2 exactness}.

\section{Topology on the sheaves and approximation lemmas}
\label{approximation}

In this section we define a topology on global sections of coherent sheaves on compact blocks. We use these topologies to prove approximation lemmas which will be used in the next section.

Let $\cF^{\widecheck h}$ be a coherent $\cA^{\check h}_{\bC^N}$-module where, as usual, $h$ is a function satisfying condition~\eqref{h condition}.
We consider a Stein exhaustion $X_\nu$ of $\bC^N$  by  compact blocks, i.e.,  the $X_\nu$, $\nu=1, \dots$  are compact blocks with $X_\nu \subset X_\mu$ if $\nu <\mu$ and $\bigcup X_\nu = \bC^N$. We might as well choose the exhaustion to be given as
\beqn
X_\nu \ = \ \{z=(z_1, \dots, z_N)\in \bC^N\mid  |\Re z_i| \leq \nu \ \ |\Im z_i| \leq \nu \}\,.
\eeqn

We apply the Cartan A part of  theorem~\ref{compact case}, i.e. the existence of surjections
\beq
\label{fixed surjection}
 (\cA^{\widecheck h}_{X_\nu})^{\oplus p_\nu}  \twoheadrightarrow       \cF^{\wcheck h}_{X_\nu}
\eeq
to the compact blocks $X_\nu$ to obtain presentations
\beq
\label{fixed presentation}
 \cA^{\widecheck h}_{\bC^N}(X_\nu)^{\oplus p_\nu}  \twoheadrightarrow       \cF^{\wcheck h}(X_\nu)\,.
\eeq
We now fix these presentations. 
 Recall that we constructed  $\cF^{\wcheck k}$ in section \S\ref{varying levels} associated to the sheaf $\cF^{\wcheck h}$ and a level $k\leq h$. By base change the presentations \eqref{fixed surjection} then give rise  to analogous presentations 
\beqn
 (\cA^{\widecheck k}_{{X_\nu}})^{\oplus p_\nu}  \twoheadrightarrow       \cF^{\widecheck k}_{X_\nu}\,.
\eeqn
and passing to global sections we have
\beq
\label{fixed presentation for k}
 \cA^{\widecheck k}_{\bC^N}(X_\nu)^{\oplus p_\nu}  \twoheadrightarrow       \cF^{\widecheck k}(X_\nu)\,.
\eeq

Let us   write $k:= h/2$. 
The space  $ \cF^{\wcheck h}({X_\nu})$ is a direct limit of an increasing {\it sequence} of
Banach spaces  $ \big( \cF^{\wcheck h}_{\nu , m} , \Vert \cdot \Vert_{ X_\nu, m} \big) 
\subset  \cF^{\wcheck h}({X_\nu})$, $m \in \bbN$. 
To see this we argue as follows. Since $X_\nu$ is compact, for any continuous function
$h_1 : X_\nu \to \bbR_+$ which satisfies $h_1(x) > h(x)$ for $x \in X_\nu$,
we can find $\varepsilon >0$ such that $h_1 \geq (1+ \varepsilon)h$. Hence, the 
space $\cA^{\wcheck h}_{\bC^N} (X_\nu)$ is a  countable direct limit of
Banach spaces, 
\begin{equation}
\cA^{\wcheck h}_{\bC^N} (X_\nu) = \varinjlim_{h_1 > h} \cA^{h_1}_{\bC^N} (X_\nu) 
= \varinjlim_{m \to \infty} \cA^{(1+ 1/m)h}_{\bC^N} (X_\nu) ,
\label{5.12}
\end{equation}
where the norm of  $\cA^{(1+ 1/m)h}_{\bC^N} (X_\nu)  $ is given by
\begin{equation}
\sup\limits_{X_\nu} \Vert f \Vert_{(1 + \frac1m)h} = 
\sup\limits_{x \in X_\nu} \Vert f (x) \Vert_{(1 + \frac1m)h(x)}  = \sup\limits_{x \in X_\nu}
\sum_{j=0}^\infty |a_j(x)| \Vert t^j \Vert_{(1 + 1/m)h(x)}.
\label{5.13}
\end{equation}
for $ f(x,t) = \sum_{j=0}^\infty a_j(x)t^j $ (cf. \eqref{C norm}). Then, 
$( \cA_{\bC^N}^{\wcheck h}({X_\nu}))^{\oplus p_\nu}$ and thus also 
$ \cF({X_\nu})$ are countable direct limits of Banach spaces,
the latter as a quotient in the presentations we fixed in~\eqref{fixed presentation}
\beqn
( \cA_{\bC^N}^{\wcheck h}({X_\nu}))^{\oplus p_\nu} 
\twoheadrightarrow      \cF({X_\nu}) \, .
\eeqn

Now,  $\cA^{\wcheck k}_{\bC^N} (X_\nu)$ has a bounded set  which contains a 
neighborhood of 0 of $\cA^{\wcheck h}_{\bC^N} (X_\nu)$. Indeed, from \eqref{5.13} it is easy to see  that the bounded set can, for example,  be chosen to be
$\{ f \, : \, \sup_{X_\nu} \Vert f  \Vert_{4/3 k} = 
\sup_{X_\nu} \Vert f  \Vert_{2/3 h} \leq 1 \}$, which contains
the neighborhood 
\begin{equation*}
\Gamma \big( \bigcup_{m \in \bbN}  \{ f \, : \, \sup\limits_{X_\nu} 
\Vert f  \Vert_{(1+1/m)h} \leq 1 \}  \big)\,;
\end{equation*} 
here $\Gamma$ denotes the absolutely convex hull. 

Let us recall that by~\eqref{inclusion} we have an inclusion $\ \cF^{\wcheck h}({X_\nu})
\subset \cF^{\wcheck k}({X_\nu}) $. Furthermore:
\begin{lem}
\label{lem150916a}
Given $\nu$, there is a bounded set  $B(\nu) \subset\cF^{\wcheck k}({X_\nu})$
which  
contains a neighborhood $U(\nu)$ of 0 of $\ \cF^{\wcheck h}({X_\nu})
\subset \cF^{\wcheck k}({X_\nu}) $.
\end{lem}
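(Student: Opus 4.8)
The plan is to transfer the corresponding statement at the level of the free presentations~\eqref{fixed presentation} and~\eqref{fixed presentation for k} to the quotient $\cF^{\wcheck h}({X_\nu})$, exactly as was done for the structure sheaf in the paragraphs immediately preceding the lemma. First I would invoke the observation already established above: $\cA^{\wcheck k}_{\bC^N}(X_\nu)$ contains a bounded set $B_0(\nu)$ — concretely $\{ f : \sup_{X_\nu}\Vert f\Vert_{2/3\,h}\leq 1\}$ — which contains a $0$-neighborhood of $\cA^{\wcheck h}_{\bC^N}(X_\nu)$, namely $\Gamma\bigl(\bigcup_{m}\{f : \sup_{X_\nu}\Vert f\Vert_{(1+1/m)h}\leq 1\}\bigr)$. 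Taking the $p_\nu$-fold direct sum, the same holds for $(\cA^{\wcheck k}_{\bC^N}(X_\nu))^{\oplus p_\nu}$ over $(\cA^{\wcheck h}_{\bC^N}(X_\nu))^{\oplus p_\nu}$; call the bounded set $\tilde B(\nu)$ and the neighborhood $\tilde U(\nu)$.

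Next I would push these forward under the surjection $\pi\cl (\cA^{\wcheck k}_{\bC^N}(X_\nu))^{\oplus p_\nu}\twoheadrightarrow\cF^{\wcheck k}({X_\nu})$ from~\eqref{fixed presentation for k}. The image $B(\nu):=\pi(\tilde B(\nu))$ is bounded in $\cF^{\wcheck k}({X_\nu})$, since continuous linear maps send bounded sets to bounded sets. For the neighborhood claim I would use that $\pi$ is a surjection of (DF, or at any rate barrelled/webbed) locally convex spaces arising from a topologically exact sequence — recall from section~\ref{varying levels} that $\cF^{\wcheck k}$ was constructed as $H_0$ of a free resolution, so~\eqref{fixed presentation for k} is a topological quotient map — hence $\pi$ is open. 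Therefore $U(\nu):=\pi(\tilde U(\nu))$ is a $0$-neighborhood of $\cF^{\wcheck h}({X_\nu})$ (note the restriction of $\pi$ to the $h$-level spaces is exactly the quotient map~\eqref{fixed presentation} defining $\cF^{\wcheck h}({X_\nu})$, again by base change and the open mapping theorem for DNF spaces, as in the proof of Lemma~\ref{DVR}). It remains to check compatibility with the inclusion $\cF^{\wcheck h}({X_\nu})\subset\cF^{\wcheck k}({X_\nu})$ of~\eqref{inclusion}: since the presentations at the two levels are related by base change $\cA^{\wcheck k}\otimes_{\cA^{\wcheck k}}(-)$, the square relating $\pi$ at level $h$ and level $k$ to this inclusion commutes, so $U(\nu)\subset B(\nu)$ inside $\cF^{\wcheck k}({X_\nu})$, as required.

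The main obstacle I expect is the open mapping / topological-exactness bookkeeping: one must be sure that the presentations~\eqref{fixed presentation} and~\eqref{fixed presentation for k} really are \emph{topological} quotient maps (not merely algebraic surjections) in the relevant category of locally convex spaces, and that the compatibility square with~\eqref{inclusion} commutes at the topological level, not just set-theoretically. This is where the DNF hypotheses on $A$ (and hence on $\cA^{\wcheck h}_{\bC^N}(X_\nu)$, via the direct-limit-of-Banach-spaces structure~\eqref{5.12}) are used, through the open mapping theorem for such spaces, exactly as in Lemma~\ref{DVR} and Lemma~\ref{exactness of tensor product}. Everything else — boundedness of images, behaviour of $\Gamma$ and of finite direct sums — is routine.
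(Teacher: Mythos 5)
Your proposal is essentially the same as the paper's proof: both pass the bounded-set/neighborhood fact for $\cA^{\wcheck k}_{\bC^N}(X_\nu)$ versus $\cA^{\wcheck h}_{\bC^N}(X_\nu)$ through the $p_\nu$-fold direct sum and then through the fixed presentations~\eqref{fixed presentation} and~\eqref{fixed presentation for k} via the commutative square. The only difference is stylistic: the paper simply observes that the vertical arrows are ``continuous surjections defining the topologies'' on the bottom row (i.e.\ topological quotient maps by construction), so no appeal to an open mapping theorem is needed — the quotient map is open by definition of the quotient topology; your invocation of DNF/webbed open-mapping machinery is therefore an unnecessary extra worry rather than a gap.
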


\begin{proof}
By the remark just above, the space $(\cA_{\bC^N}^{\wcheck k}({X_\nu}))^{\oplus p_{\nu}}$
has a bounded set which is a 
neighborhood of 0 of $ (\cA_X^{\wcheck h}({X_\nu}))^{\oplus p_{\nu}}$. 
The claim follows from  the commutative diagram
\beqn
\begin{CD}
 (\cA_{\bC^N}^{\wcheck h}({X_\nu}))^{\oplus p_{\nu}}@>>>(\cA_{\bC^N}^{\wcheck k}({X_\nu}))^{\oplus p_{\nu}}
 \\
 @VVV @VVV 
 \\
\cF^{\wcheck h}({X_\nu}) @>>> \cF^{\wcheck k}({X_\nu}) ,
\end{CD}
\label{5.16}
\eeqn
where the horizontal  mappings are inclusions and the vertical ones are 
continuous surjections defining the topologies of the spaces on the 
last row. 
\end{proof}

We denote the Minkowski functional of the bounded set $B(\nu)$ by
\begin{equation}
\Vert f \Vert_{B(\nu)} = \inf\{ r > 0  \, : \, f \in rB(\nu) \} . 
\label{5.16j}
\end{equation}
This is well defined for $f \in   \cF^{\wcheck h}({X_\nu})$, by the lemma above. Moreover:
\beq
\label{B(n) convergence}
\begin{gathered}
\text{ convergence with respect to $\Vert \cdot \Vert_{B(\nu)}$ implies}
\\
\text{convergence in the topology of
$ \cF^{\wcheck k}({X_\nu}) $}.
\end{gathered}
\eeq

We will now formulate the second approximation theorem we will make use of in the next section. In this lemma we fix the level $h$ but compare the topologies on various $\cF^{\wcheck h}({X_\nu}) $.

To compare these norms for various $X_\nu$ let us consider $X_\nu$ and $X_{\nu+1}$.  By restricting the presentation
\beqn
 (\cA^{\widecheck h}_{X_{\nu+1}})^{\oplus p_{\nu+1}}  \twoheadrightarrow       \cF^{\wcheck h}_{X_{\nu+1}}
\eeqn
to $X_\nu$ we obtain two presentations of  $\cF^{\wcheck h}|_{X_\nu}$ and we then choose  ${s_{\nu+1,\nu}}$ once and for all so that
\beqn
\begin{CD}
\cA_{X_\nu} ^{\oplus p_{\nu+1}} @>{s_{\nu+1,\nu}}>> \cA_{X_\nu} ^{\oplus p_\nu} 
 \\
 @VVV @VVV
 \\
 \cF^{\wcheck h}|_{X_\nu} @=  \cF^{\wcheck h}|_{X_\nu} 
\end{CD}
\eeqn
commutes. Passing to global sections we obtain:
\beq
\label{comparison2}
\begin{CD}
 \cA_{\bC^N}(X_{\nu+1})^{\oplus p_{\nu+1}}@>>>  \cA_{\bC^N}(X_\nu)^{\oplus p_{\nu+1}} @>{s_{\nu+1,\nu}}>> \cA_{\bC^N}(X_\nu)^{\oplus p_\nu} 
 \\
 @VVV @VVV @VVV
 \\
\cF^{\wcheck h}(X_{\nu+1}) @>>> \cF^{\wcheck h}(X_\nu) @=  \cF^{\wcheck h}(X_\nu)
\end{CD}
\eeq
with surjective columns.

\begin{lem}
\label{lem150916b}
Given $\nu$, $\gamma_\nu' \in 
\cF^{}({X_\nu}) $ 
and a collection of continuous seminorms $\Vert \cdot \Vert_{\nu,n}$
on $\cF^{}({X_n}) $, $n \leq \nu$,  then, for any $\varepsilon >0$,
there exists $\gamma_\nu \in \cF^{\wcheck h}({X_{\nu+1}}) $ 
such that for all $n \leq \nu$ 
we have 
\begin{eqnarray}
\Vert \gamma_\nu |_{X_{n}} - \gamma_\nu'|_{X_{n}} \Vert_{\nu,n} \leq \varepsilon 
. \label{1509E}
\end{eqnarray}
\end{lem}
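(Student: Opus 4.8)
The plan is to prove Lemma~\ref{lem150916b} by a Runge-type approximation argument, reducing the statement about sections of $\cF^{\wcheck h}$ to a statement about sections of the free sheaves $\cA^{\wcheck h}_{\bC^N}$ via the fixed presentations~\eqref{fixed presentation} and the comparison diagram~\eqref{comparison2}. The point is that $\gamma_\nu'$ lives on $X_\nu$, and we want to approximate it (on the smaller blocks $X_n$, $n\le\nu$, in the given seminorms) by a section that extends to the larger block $X_{\nu+1}$; this is possible because $X_\nu$ sits ``Runge'' inside $X_{\nu+1}$ in the appropriate sense. First I would lift $\gamma_\nu'$ through the surjection $\cA^{\wcheck h}_{\bC^N}(X_\nu)^{\oplus p_\nu}\twoheadrightarrow\cF^{\wcheck h}(X_\nu)$ to an element $\tilde\gamma\in\cA^{\wcheck h}_{\bC^N}(X_\nu)^{\oplus p_\nu}$, so that it suffices to approximate $\tilde\gamma$ by (the image under $s_{\nu+1,\nu}$, up to suitable identification, of) an element of $\cA^{\wcheck h}_{\bC^N}(X_{\nu+1})^{\oplus p_{\nu+1}}$.

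**The approximation step on free sheaves.** Since $X_\nu$ and $X_{\nu+1}$ are compact blocks, a vector-valued section $f\in\cA^{h_1}_{\bC^N}(X_\nu)$, $h_1>h$, can be written $f(z,t)=\sum_j a_j(z)t^j$ with $a_j\in\cO(X_\nu)$, and by the usual several-variables Runge theorem on polydiscs/blocks each $a_j$ is approximable uniformly on a slightly smaller block by polynomials, hence by elements of $\cO(X_{\nu+1})$. The content is to make this approximation uniform in $j$ so that it converges in the relevant Banach norm~\eqref{5.13}. For this I would fix a continuous $h_1$ with $h<h_1<h$ (say $h_1=(1+\varepsilon)h$ on $X_\nu$ for small $\varepsilon$, using compactness as in~\eqref{5.12}), note that the tail $\sum_{j\ge J}|a_j(z)|\Vert t^j\Vert_{h_1(z)}$ is uniformly small for $J$ large by definition of $\cA^{h_1}$, and then approximate only the finitely many coefficients $a_0,\dots,a_{J-1}$ by polynomials, controlling the error in $\Vert\cdot\Vert_{h}$ on $X_n$ using $\Vert t^j\Vert_h\le 1$ (normalization~\eqref{22NEW}) and the fact that on the compact block $X_n\subset X_\nu$ the polynomial approximants converge uniformly. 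This produces, for each prescribed tolerance, a polynomial (hence $\cA^{\wcheck h}_{\bC^N}(X_{\nu+1})$-) section approximating $\tilde\gamma$ on each $X_n$, $n\le\nu$, in the sup-norm of~\eqref{5.13} for a level slightly below $h_1$, in particular in the topology of $\cA^{\wcheck h}_{\bC^N}(X_n)^{\oplus p_\nu}$.

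**Transporting back to $\cF^{\wcheck h}$.** Pushing the approximating free section down through the presentation $\cA^{\wcheck h}_{\bC^N}(X_{\nu+1})^{\oplus p_{\nu+1}}\twoheadrightarrow\cF^{\wcheck h}(X_{\nu+1})$ and using that the vertical maps in~\eqref{comparison2} are continuous surjections (so images of null sequences are null, i.e. each $\Vert\cdot\Vert_{\nu,n}$ pulls back to a continuous seminorm on the free space), the approximation in the free sheaf descends to the desired approximation~\eqref{1509E} in $\cF^{\wcheck h}(X_n)$. Concretely: given $\varepsilon$ and the finitely many seminorms $\Vert\cdot\Vert_{\nu,n}$, pull each back to a continuous seminorm on $\cA^{\wcheck h}_{\bC^N}(X_n)^{\oplus p_\nu}$, hence (by definition of the direct-limit/Banach topology) dominate it by a multiple of some $\sup_{X_n}\Vert\cdot\Vert_{(1+1/m)h}$; choose the polynomial approximation fine enough in that norm; set $\gamma_\nu$ to be the image in $\cF^{\wcheck h}(X_{\nu+1})$ of the corresponding free section. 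Commutativity of~\eqref{comparison2} then gives $\gamma_\nu|_{X_n}$ is close to $\gamma_\nu'|_{X_n}$ in $\Vert\cdot\Vert_{\nu,n}$.

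**Main obstacle.** The delicate point is the uniformity in $j$ of the polynomial approximation: one must split off a uniform tail in the weighted norm and approximate only finitely many Fourier--Taylor coefficients, while simultaneously keeping the level $h_1$ honestly above $h$ throughout (so that the resulting section is genuinely in $\cA^{\wcheck h}$ on the bigger block) — this is exactly where the compactness of the blocks and the normalization $\Vert t^j\Vert_h\le 1$ are used. The bookkeeping with the two presentations on $X_\nu$ (hence the map $s_{\nu+1,\nu}$) and the continuity of all the vertical surjections is routine but must be done carefully to land the estimate in the prescribed seminorms $\Vert\cdot\Vert_{\nu,n}$ rather than in some auxiliary norm.
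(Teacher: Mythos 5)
Your proposal follows essentially the same route as the paper's proof: truncate the $t$-series of a lift to finitely many terms using a Banach norm $\sup_{X_n}\Vert\cdot\Vert_{(1+1/m)h}$, approximate those finitely many scalar coefficients by polynomials via Runge on $X_\nu$, and transfer the resulting density back to $\cF^{\wcheck h}$ through the continuous surjections of the presentation diagram. The paper avoids your detour through $s_{\nu+1,\nu}$ by lifting directly against the $X_{\nu+1}$-presentation restricted to $X_\nu$ (its diagram~\eqref{5.22}) and packages the finitely many target seminorms into the single sum~\eqref{5.23}, but these are cosmetic differences; the only small imprecision in your write-up is that a continuous seminorm on the inductive limit is dominated by a Banach-step norm only after restricting to that step, which is legitimate here because both $\tilde\gamma$ and its polynomial approximant lie in a fixed $\cA^{(1+1/m)h}$.
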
 

\begin{proof} $1^\circ$. We show that the restriction map
$ \cA_{\bC^N}^{\wcheck{h}}({X_{\nu+1}})
\to \cA_{\bC^N}^{\wcheck{h}}({X_{\nu}} )$ has dense image.
Given $g_\nu' 
\in \cA_{\bC^N}^{\wcheck{h}}({X_{\nu}})$ 
we write $g_{\nu}'(x) = \sum_{j=0}^\infty a_{\nu,j}(x)
t^j$, where $a_{\nu,  j}$ are scalar
holomorphic mappings on $X_\nu$.  
We also choose $m$ such that $g'_{\nu} \in \cA_X^{(1+1/ m)h}(X_\nu)$
(see \eqref{5.12}), i.e., we have
$g'_{\nu} (x) \in A_{(1+ 1/ m)h(x)}$ for all $x \in X_\nu$.

Given $\varepsilon$ we now choose $l$ such that (cf. \eqref{5.13}) 
\begin{equation*}
\sup\limits_{x \in X_n}
\Vert \sum_{j=l}^\infty a_{\nu,  j}(x) t^j \Vert_{(1 + 1/ m)h(x)}
 = \sup\limits_{x \in X_n} \sum_{j=l}^\infty |a_{\nu,  j}(x)|
\Vert  t^j \Vert_{(1 + 1/ m)h(x)} 
< \varepsilon /2 
\end{equation*}
for all $n \leq \nu$. Then, we have finitely many scalar holomorphic mappings
$ a_{\nu,  j} : X_\nu \to \bbC$, $j < l$, and using the Runge 
approximation theorem on $X_\nu$, 
we approximate all of them by corresponding polynomials 
$ P_{\nu,  j} : X \to \bbC$, $j < k$ such that
\begin{equation*}
\sup\limits_{x \in X_n}
 \sum_{j=0}^{k-1} \big| P_{\nu,  j}(x) -   a_{\nu,  j}(x) \big| 
 \Vert t^j \Vert_{(1 + 1/ m) h(x)} 
< \varepsilon/2  
\end{equation*}
for all $n \leq \nu$. We define $g_{\nu}   \in\cA_{\bC^N}^{\wcheck{h}}({X_{\nu +1}})$
by 
\begin{equation*}
g_{\nu} (x)  =  \sum_{j=0}^{k-1}  P_{\nu,  j} (x) t^j \ \ , \ \ x \in X_{\nu+1}.
\end{equation*}
It is clear from above that for all $n \leq \nu $
\begin{equation*}
\sup\limits_{x \in X_n}
\Vert g_\nu (x) - g_\nu' (x) \Vert_{(1 + 1/ m)h(x)}
< \varepsilon  . \label{5.19a}
\end{equation*}

$2^\circ$. 
We now consider the following commutative diagram:
\beq
\begin{CD}
\big( \cA_{\bC^N}^{\wcheck{h}}({X_{\nu+1}})\big)^{\oplus p_{\nu+1}}@>>> 
 \big(\cA_{\bC^N}^{\wcheck{h}}({X_{\nu}})\big)^{\oplus p_{\nu+1}}
 \\
 @VVV @VVV
 \\
\cF^{\wcheck h}({X_{\nu+1}}) @>>> \cF^{\wcheck h}({X_\nu})
\end{CD}
\label{5.22}
\eeq
By the above argument, the top mapping has dense image.  Also, the vertical mappings in \eqref{5.22} are 
continuous surjections. Hence, the bottom map necessarily also has dense image. Moreover, the expression 
\begin{equation}
\Vert f \Vert_\nu = \sum_{n=1}^\nu \big\Vert f|_{X_n} \big\Vert_{\nu,n}
\label{5.23}
\end{equation}
is a continuous seminorm on $\cF^{\wcheck h}({X_\nu})$, since the restriction maps are continuous.
Consequently, we can approximate 
any element of $\cF^{\wcheck h}({X_\nu})$ arbitrarily well by an element of 
$\cF^{\wcheck h}({X_{\nu+1}})$ with respect to the seminorm \eqref{5.23}. 
Hence, \eqref{1509E} follows.
\end{proof}

\section{Cartan theorems}
\label{main section}

In this section we prove Cartan's  theorems A and B in our setting. As we mentioned in the introduction, we follow the classical  strategy and pass from the compact case of the theorem to the general case by a Stein exhaustion. However, in our setting this process is not as straightforward as in the classical case as controlling various seminorms is more tricky. To control these norms we are forced to allow the auxiliary base rings $A_{\widecheck h}$ to vary along $X$. 

We state our main theorem here for completeness. 
\begin{thm}
\label{main theorem}
Let $X$ be a Stein manifold and let $\cF$ be a coherent  $\cA_X$-module. Then $\oh^i(X,\cF)=0$ for $i\geq 1$. Furthermore, the sheaf $\cF$ is generated by its global sections. 
\end{thm}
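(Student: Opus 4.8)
The plan is to reduce the statement on a general Stein manifold $X$ to the case $X=\bC^N$ already treated (via Theorem~\ref{compact case} on compact blocks), and then to carry out the classical exhaustion argument, being careful about the varying levels. First I would embed $X$ as a closed submanifold of some $\bC^N$ (Remmert embedding), extend the coherent sheaf $\cF$ to a coherent $\cA_{\bC^N}$-module supported near $X$ in the usual way, and use that cohomology and global generation of $\cF$ on $X$ follow from the corresponding statements for the extension on $\bC^N$. So it suffices to prove the theorem for $X=\bC^N$. Next, since $\cF$ is a coherent $\cA_X$-module with $\cA_X=\varinjlim \cA_X^{\wcheck h}$ over the constant levels, and since by the discussion in section~\ref{varying levels} (culminating in~\eqref{inclusion}) $\cF$ descends to a coherent $\cA^{\wcheck h}_{\bC^N}$-module $\cF^{\wcheck h}$ for some function $h$, I would choose $h$ of the special form~\eqref{big H}--\eqref{h condition} (e.g.\ $h(r)=\exp(-\int_0^r H)$ with $H$ nondecreasing) so that Theorem~\ref{compact case} applies on every compact block $X_\nu$ of a Stein exhaustion of $\bC^N$.

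Then I would run the standard two-step argument. For Cartan B on $\bC^N$: fix a Stein exhaustion $X_1\subset X_2\subset\cdots$ by compact blocks, apply Theorem~\ref{compact case} to get vanishing $\oh^i(X_\nu,\cF^{\wcheck h})=0$ for $i\ge1$ and all $\nu$, and then show the restriction maps $\cF^{\wcheck h}(X_{\nu+1})\to\cF^{\wcheck h}(X_\nu)$ have dense image — this is exactly Lemma~\ref{lem150916b}. A Mittag-Leffler / approximation argument then gives $\varprojlim^1=0$ for the inverse system of global sections and hence $\oh^i(\bC^N,\cF^{\wcheck h})=0$ for $i\ge1$; finally one passes to the limit over the levels, $\cF=\cA_X\otimes_{\cA^{\wcheck h}_X}\cF^{\wcheck h}$, using exactness of the relevant direct limit to conclude $\oh^i(\bC^N,\cF)=0$. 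For Cartan A: given $x\in X_\nu$, Cartan B on $\bC^N$ (or already on $X_{\nu+1}$) produces a surjection $\cF(\bC^N)\to\cF_x/\fm_x\cF_x$ exactly as in the proof of Theorem~\ref{compact case}; by Nakayama the lifted sections generate $\cF_x$ over $\cA_{X,x}$, hence over an open neighborhood, and since $X$ is Lindel\"of one extracts a global surjection $(\cA_X)^{\oplus n}\to\cF$ from a countable cover — here one must check the (countable) direct sum of the local generating maps still maps into and onto $\cF$, which is routine because the cover is locally finite.

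The main obstacle I expect is the convergence/continuity bookkeeping in the limiting process: one is not dealing with Fr\'echet spaces but with countable direct limits of Banach spaces ($DF$-type spaces), so the usual Mittag-Leffler vanishing of $\varprojlim^1$ is not automatic and must be obtained through the two approximation lemmas (Lemmas~\ref{lem150916a} and~\ref{lem150916b}). Concretely, the delicate point is that the natural topology on $\cF^{\wcheck h}(X_\nu)$ is only a quotient $DF$-topology, and to run Mittag-Leffler one needs the Minkowski-functional norms $\Vert\cdot\Vert_{B(\nu)}$ coming from Lemma~\ref{lem150916a}, controlling them via the fixed presentations~\eqref{fixed presentation} and the comparison diagram~\eqref{comparison2}; the dense-image statement of Lemma~\ref{lem150916b} must be combined with these seminorm estimates to build a convergent "correction series" $\gamma=\sum(\gamma_\nu-\gamma_{\nu-1})$ whose partial restrictions stabilize. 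This is the step where the varying-level machinery of section~\ref{varying levels} and the \emph{controlled nuclearity} assumption~\eqref{1g} — which let one trade the $\wcheck h$-topology against a slightly smaller $\wcheck k$-topology with $k=h/2$ — are genuinely used; everything else is a faithful transcription of the classical proof.
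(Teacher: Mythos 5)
Your overall strategy is the same as the paper's: reduce to $X=\bC^N$ (implicitly via embedding), exhaust by compact blocks, use Theorem~\ref{compact case} for vanishing on each $X_\nu$ and for local generation, then glue by an approximation argument resting on Lemmas~\ref{lem150916a} and~\ref{lem150916b}, and you correctly identify the crux as the drop from level $\widecheck h$ to $\widecheck k=\widecheck{h/2}$ because the global sections $\cF^{\wcheck h}(X_\nu)$ are not Fr\'echet. The paper's presentation differs in two places worth noting. First, it dispenses with $i\geq 2$ up front by citing the general Grauert--Remmert theorem (which needs only vanishing on $X_\nu$, not denseness) and devotes the serious work to $i=1$; you treat all $i\geq1$ uniformly, which is fine in principle. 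Second, and more substantively, your intermediate claim that ``Mittag--Leffler gives $\oh^i(\bC^N,\cF^{\wcheck h})=0$'' is \emph{not} what the approximation argument actually delivers: the series of corrections $\sum(\gamma_{\nu}'-\gamma_\nu)$ converges only in the larger space $\cF^{\wcheck k}(X_\nu)$ (via the bounded set $B(\nu)$ of Lemma~\ref{lem150916a}, which sits in $\cF^{\wcheck k}$), so one obtains a trivializing cochain $\beta\in\prod\cF^{\wcheck k}(V_i)$ and hence vanishing of the map $\oh^1(\cF^{\wcheck h})\to\oh^1(\cF^{\wcheck k})$ rather than vanishing of $\oh^1(\cF^{\wcheck h})$ itself. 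Since $\cF^{\wcheck k}\subset\cF$ this already gives $\oh^1(\bC^N,\cF)=0$ directly, so the limit over levels you invoke at the end is unnecessary; but as written your sketch asserts an intermediate statement the argument does not establish. Also, it is really the DFN structure of the $A_{\wcheck h}$ (a bounded set of $A_{\wcheck k}$ absorbing a neighborhood of $0$ in $A_{\wcheck h}$, Lemma~\ref{lem150916a}) that powers the $\widecheck h\to\widecheck k$ trade, not the controlled nuclearity axiom~\eqref{1g} per se, which is used earlier in the $L^2$ estimates.
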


\begin{rmk}
We can slightly generalize the theorem above. It also holds for coherent $\cA^{\wcheck h}_X$-module provided that $h$ satisfies condition~\eqref{h condition}. The proof below goes through in this case with minor adjustments. 
\end{rmk}

As we remarked earlier we can assume that $X=\bC^N$ as any Stein manifold can be embedded in $\bC^N$ for some $N$. We first remark that theorem A follows formally from theorem B in a similar manner as as was explained in section~\ref{compact section} for compact blocks. In our setting the argument there gives a surjection $\oh^0(X,\cF)\otimes_A \cA_X\to \cF$.

We now consider a Stein exhaustion $(X_\nu)_{\nu \in \bbN}$ by compact compact blocks in $\bC^N$, as in the previous section.  By theorem~\ref{compact case} we can conclude that
\begin{equation*}
\text{$\oh^i(X_\nu,\cF)=0$ for $i\geq 1$}\,.
\end{equation*}
By a very general argument, see, for example, \cite[Chapter 4, \S1,Theorem 4]{GrRe2} we conclude that  $\oh^i(X,\cF)=0$ for $i\geq 2$. Thus we are left to deal with the case $i=1$.

To prove the vanishing of $\oh^1(X,\cF)$ 
we fix a countable cover of $\bC^N$ by precompact open Stein domains $U_i$, ${i \in \bN}$. We can and will assume that the cover has the property that for any $X_\nu$ only finitely many of the $U_i$ have a non-trivial intersection with $X_\nu$. We will next choose Stein domains $(V_i)_{i\in \bN}$ such that  $V_i\subset \bar V_i \subset U_i$ and such that the $V_i$ still form a cover of $\bC^N$. O course, the sets $\bar V_i$ are compact. 
We also assume to be given a representative $\al\in \prod \cF(U_i\cap U_j)$ of a class in $\oh^1(X,\cF)$; we fix $\alpha $ for the rest of the proof.
Let us now consider the Chech complexes: 
\begin{equation*}
\begin{CD}
\prod \cF(U_i) @>{\delta^0}>> \prod \cF(U_i\cap U_j) @>{\delta^1}>>\prod \cF(U_i\cap U_j\cap U_k) @>>> \dots
\\
@VVV @VVV @VVV 
\\
\prod \cF(\bar V_i) @>{\delta^0}>> \prod \cF(\bar V_i\cap \bar V_j) @>{\delta^1}>> \prod \cF(\bar V_i \cap \bar V_j\cap\bar V_k) @>>> \dots
\\
@VVV @VVV @VVV 
\\
\prod \cF(V_i) @>{\delta^0}>> \prod \cF(V_i\cap V_j) @>{\delta^1}>> \prod \cF(V_i\cap V_j\cap V_k) @>>> \dots
\end{CD}
\end{equation*}
Now, the first row and the third row both compute the cohomology $\oh^*(X,\cF)$ and the restriction map from the first row to the third row induces the identity map on $\oh^*(X,\cF)$.  
Restricting the cocycle $\al$ to the cover $(V_i)$ we obtain 
$\al_V\in \prod \cF(V_i\cap V_j)$. As $\al_V$ comes from $\prod \cF(\bar V_i \cap \bar V_j)$ we see that each component $\al_V(i,j)\in  \cF^{ {h_{i,j}} } (V_i\cap V_j) $ for some constants $ h_{i,j}  > 0$.

Let us choose  a twice differentiable  function $h: X \to \bbR^+$ (recall that $h$ is a function of the norm $r=|x|$, only) such that it is smaller than the above constants $h_{i,j}$ on the sets $U_i \cap U_j$ and satisfies the condition~\eqref{h condition}. This is possible because for each $X_\nu$ there are only finitely many $V_i$ intersecting $X_\nu$. Let us write $h_\nu$ for the minimum of the $h_{i,j}$ arising from the $V_i$ which intersect $X_\nu$. Thus, the function $h: \bbR^+ \to \bbR^+$ has to satisfy:
\beqn
h(r) < h_\nu \ \ \ \  \text{if $\nu-1 \leq r \leq \nu$}\,.
\eeqn
To obtain an $h$ satisfying this condition one simply chooses $H$ to decrease sufficiently rapidly and then $h$ is given by formula~\eqref{big H}.

We also
denote  $k = h/2$ as in Lemma \ref{lem150916a}. 
As a consequence, we have 
\begin{equation}
\alpha_V \in \prod_{i,j} \cF^{\wcheck h} (V_i \cap V_j) \label{alphaincheckh}
\end{equation}
where $\cF^{\wcheck h}$ is the sheaf associated to $\cF$ constructed in section \S\ref{varying levels}. According to the discussion in \S\ref{varying levels}, see in particular~\eqref{inclusion}, we have canonical inclusions $\cF^{\wcheck h}\subset \cF^{\wcheck k} \subset \cF$.

Because $\oh^1(X_\nu, \cF^{\widecheck h})=0$ we conclude that $\alpha_V$ restricted to $X_\nu$ is trivial,  i.e., there is a 
\begin{eqnarray}
\beta_\nu' \in \prod_{i \in I} \cF^{\wcheck h} (U_i \cap X_\nu)  \label{1509BB}
\end{eqnarray} 
such that 
\begin{eqnarray}
( \delta^0 | X_\nu ) \beta_\nu' = \alpha |_{X_\nu} . \label{1509C}
\end{eqnarray}

We have:
\begin{lem}
\label{lem2.3}
Given  $\alpha \in {\rm Ker} \, \delta^1 \subset
 \prod \cF^{\wcheck h} (V_i \cap V_j) $  as in 
\eqref{alphaincheckh},
there exist 
sequences $(\beta_\nu)_{\nu=1}^\infty$, 
$ \beta_\nu \in \prod_i \cF^{\wcheck h} (V_i \cap X_\nu)$,  and 
$(\delta_\nu)_{\nu=1}^\infty$, $\delta_\nu
\in \prod_i \cF^{\wcheck k} (X_{\nu-1})$ 
with the following properties for all $\nu$:
\smallskip

1) $(\delta^0 |_{ X_\nu} ) \beta_\nu = \alpha |_{ X_\nu} $

\smallskip

2) $\big( \beta_{\nu+1}+  \delta_{\nu+1}\big) \big|_{ X_{\nu-1}} = 
\big( \beta_{\nu}+  \delta_{\nu}\big) \big|_{ X_{\nu-1}} \ \text{in \ $\prod_i \cF^{\wcheck k} (V_i\cap X_{\nu-1}) $}$

\end{lem}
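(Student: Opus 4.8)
The plan is to produce the sequence $(\beta_\nu)$ first, by induction on $\nu$, arranging at each step that the ``defect'' $\sigma_\nu:=\beta_{\nu+1}-\beta_\nu$ (which is a global section over $X_\nu$, as explained below) is very small on each of the smaller blocks, and then to define the correction terms $\delta_\nu$ as convergent tails $\sum_{\mu\ge\nu}\sigma_\mu$. For the base case I would take $\beta_1:=\beta_1'|_{V_i}$, with $\beta_1'$ as in \eqref{1509BB}--\eqref{1509C}, and $\delta_1:=0$ (here $X_0$ is a point, so property 2) at $\nu=1$ is vacuous).

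For the inductive step, assume $\beta_1,\dots,\beta_\nu$ are constructed with property 1). Applying \eqref{1509BB}--\eqref{1509C} for the block $X_{\nu+1}$ (i.e. using $\oh^1(X_{\nu+1},\cF^{\wcheck h})=0$ from Theorem~\ref{compact case}), choose $\tilde\beta\in\prod_i\cF^{\wcheck h}(V_i\cap X_{\nu+1})$ with $(\delta^0|_{X_{\nu+1}})\tilde\beta=\alpha|_{X_{\nu+1}}$. On $X_\nu$ we have $\delta^0(\tilde\beta-\beta_\nu)=\alpha-\alpha=0$, so the components of $\tilde\beta-\beta_\nu$ agree on the overlaps and glue to a section $\rho_\nu\in\cF^{\wcheck h}(X_\nu)$. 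Now I would invoke the approximation Lemma~\ref{lem150916b}, applied to $\rho_\nu$ with the seminorms $\Vert\,\cdot\,\Vert_{B(n)}$ for $n\le\nu-1$ (these are continuous seminorms, since by Lemma~\ref{lem150916a} the bounded set $B(n)$ contains a $0$-neighbourhood of $\cF^{\wcheck h}(X_n)$) and with $\varepsilon:=2^{-\nu}$, to obtain $\eta_\nu\in\cF^{\wcheck h}(X_{\nu+1})$ such that $\Vert(\eta_\nu-\rho_\nu)|_{X_n}\Vert_{B(n)}\le 2^{-\nu}$ for all $n\le\nu-1$. Set $\beta_{\nu+1}:=\tilde\beta-\eta_\nu$, restricting $\eta_\nu$ componentwise to the $V_i$. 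Since $\eta_\nu$ is a global section, $\delta^0\eta_\nu=0$, whence $(\delta^0|_{X_{\nu+1}})\beta_{\nu+1}=\alpha|_{X_{\nu+1}}$, which is property 1) for $\beta_{\nu+1}$. Moreover $\sigma_\nu:=\beta_{\nu+1}-\beta_\nu$ equals the global section $\rho_\nu-\eta_\nu\in\cF^{\wcheck h}(X_\nu)$, and $\Vert\sigma_\nu|_{X_n}\Vert_{B(n)}\le 2^{-\nu}$ for all $n\le\nu-1$.

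Once all the $\beta_\nu$ and the global sections $\sigma_\nu\in\cF^{\wcheck h}(X_\nu)$ are in hand, I would define $\delta_\nu\in\prod_i\cF^{\wcheck k}(X_{\nu-1})$ to be (in every component $i$) the global section
\[
\delta_\nu\ =\ \sum_{\mu\ge\nu}\ \sigma_\mu\big|_{X_{\nu-1}}\ .
\]
Each $\sigma_\mu$ with $\mu\ge\nu$ is defined on $X_\mu\supseteq X_{\nu-1}$, and $\sum_{\mu\ge\nu}\Vert\sigma_\mu|_{X_{\nu-1}}\Vert_{B(\nu-1)}\le\sum_{\mu\ge\nu}2^{-\mu}<\infty$; hence by \eqref{B(n) convergence} the partial sums form a Cauchy sequence converging in $\cF^{\wcheck k}(X_{\nu-1})$, so $\delta_\nu$ is well defined. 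Property 2) is then a telescoping identity: on $X_{\nu-1}$,
\[
(\beta_{\nu+1}+\delta_{\nu+1})-(\beta_\nu+\delta_\nu)\ =\ \sigma_\nu|_{X_{\nu-1}}+\Big(\sum_{\mu\ge\nu+1}\sigma_\mu-\sum_{\mu\ge\nu}\sigma_\mu\Big)\Big|_{X_{\nu-1}}\ =\ \sigma_\nu|_{X_{\nu-1}}-\sigma_\nu|_{X_{\nu-1}}\ =\ 0 ,
\]
which is exactly the required equality in $\prod_i\cF^{\wcheck k}(V_i\cap X_{\nu-1})$ (using the canonical inclusions $\cF^{\wcheck h}\subset\cF^{\wcheck k}$ of \eqref{inclusion}).

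The main obstacle — and the reason the drop $h\rightsquigarrow k=h/2$ is built into the statement — is precisely the convergence of these correction tails: the defects $\sigma_\mu$ are controlled only through the finitely many seminorms $\Vert\,\cdot\,\Vert_{B(n)}$, which are adapted to the topology of $\cF^{\wcheck h}$, and summability of these seminorms does not keep the limit inside $\cF^{\wcheck h}$ but only inside the strictly larger $\cF^{\wcheck k}$; this is the content of Lemma~\ref{lem150916a} together with \eqref{B(n) convergence}. The remaining points — that Lemma~\ref{lem150916b} can be applied simultaneously in the finitely many norms $\Vert\,\cdot\,\Vert_{B(n)}$, $n\le\nu-1$, and the bookkeeping of which block each object lives on — are routine.
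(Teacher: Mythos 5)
Your proof is correct and follows essentially the same approach as the paper: inductively improve the initial solutions $\beta_\nu'$ by subtracting a global section produced by the approximation Lemma~\ref{lem150916b}, arrange geometric decay of the successive defects against the seminorms tied to the bounded sets $B(n)$, and obtain $\delta_\nu$ as the resulting convergent tail in $\cF^{\wcheck k}$. The only cosmetic difference is that you apply Lemma~\ref{lem150916b} directly with the Minkowski seminorms $\Vert\cdot\Vert_{B(n)}$ (legitimately continuous on $\cF^{\wcheck h}(X_n)$ by Lemma~\ref{lem150916a}), whereas the paper routes the estimate through the auxiliary seminorms $\Vert\cdot\Vert_{\nu,n}$ and the constants $K(\nu,n)$ of~\eqref{1509num}, and you write $\delta_\nu$ as an explicit series rather than as a limit of the partial sums $s_j^{(\nu)}$ — these amount to the same computation.
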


Let us first argue that this lemma implies the main result. 
By property 2) of
the lemma, there exists a section $\beta \in \prod_i \cF^{\wcheck k} (V_i)
\subset \prod_i \cF (V_i)$ such that 
$\beta  \big|_{ X_\nu} = \big( \beta_{\nu+1}+  \delta_{\nu+1}\big) \big|_{ X_\nu}$
for all $\nu$. Property 1) of the lemma then implies 
\begin{eqnarray}
(\delta^0 |_{ X_\nu} ) \beta=(\delta^0 |_{ X_\nu} ) ( \beta_{\nu+1}\big|_{ X_\nu} ) +  
(\delta^0 |_{ X_\nu} )(\delta_{\nu+1} \big|_{ X_\nu} ) = 
\alpha |_{ X_\nu} , \nonumber 
\end{eqnarray}
which gives $\delta^0 (\beta) = \alpha$. As $\alpha$ was an arbitrary 1-cocycle we conclude that $\oh^1(X,\cF)=0$. 
Thus, it remains to prove the lemma. 

\vspace{6pt}

{\it Proof of lemma~\ref{lem2.3}}. We make the following definition by induction: assume
that $\nu \in \bbN$ and that 
\begin{eqnarray}
\beta_m 
\in \prod_i \cF^{\wcheck h} (U_i \cap X_m) \ \ , \ \ m \leq \nu
 \label{1509F}
\end{eqnarray} 
have been  chosen such that
\begin{eqnarray}
(\delta^0 |_{ X_m} ) \beta_m = \alpha |_{ X_m} \ \ \mbox{for all} \ m=1, \ldots , \nu .
\label{2.7min}
\end{eqnarray}
We define 
\begin{eqnarray}
\gamma_\nu' = \beta_{\nu +1}' |_{ X_\nu} - \beta_{\nu} , \label{2.7}
\end{eqnarray}
hence,  we have  
\begin{eqnarray}
(\delta^0 |_{ X_\nu} ) \gamma_\nu' =(\delta^0 |_{ X_\nu} ) \big( \beta'_{\nu+1} |_{ X_\nu} 
\big)- 
(\delta^0 |_{ X_\nu} ) \beta_\nu = \alpha |_{X_\nu}- \alpha |_{X_\nu} =
 0  \nonumber
\end{eqnarray}
by \eqref{1509C} and \eqref{2.7min}; as a consequence, $ \gamma_\nu'
\in \cF^{\wcheck h} (X_\nu)$.

We now apply Lemma~\ref{lem150916a} for all $n \leq \nu$. Thus we obtain bounded sets $B(n)\subset \cF^{\wcheck k}(X_n)$ which contain  open neighborhoods $U(n)$ of the origin in  $\cF^{\wcheck h}(X_n)$. We also write  $\Vert \cdot \Vert_{\nu,n}$ for
the continuous seminorms on $\cF^{\wcheck h} (X_n)$ whose unit ball is the open neighborhood $U(n)$.
By Lemma~\ref{lem150916a} and the notation introduced in~\eqref{5.16j}  we can find constants $K(\nu,n) > 1 $ such that
\beq
\Vert f \Vert_{B(n) } \leq K(\nu ,  n ) \Vert f \Vert_{\nu, n} 
\label{1509num}
\eeq
for all $f \in \cF^{\wcheck h} ({X_n})$.
We now apply Lemma \ref{lem150916b} to $\gamma_\nu'$ and thus find $\gamma_\nu
\in \cF^{\wcheck h} ({X_{\nu +1}})$
such that 
\beq
\label{2.11}
\Vert \gamma_\nu' |_{X_n}  - \gamma_\nu  |_{X_n} \Vert_{\nu, n}
\leq  2^{-\nu} \big( \max\limits_{n \leq \nu}  K(\nu, n ) \big)^{-1} .
\eeq
To complete the induction step we define 
\begin{eqnarray}
\beta_{\nu +1} = \beta_{\nu +1 }' - \gamma_\nu 
\in 
\prod_i \cF^{\wcheck h} (U_i \cap X_{\nu+1 } )  , \label{2.13}
\end{eqnarray}
which implies 
\begin{eqnarray*}
(\delta^0 |_{ X_{\nu+1}} ) \beta_{\nu+1} = (\delta^0 |_{ X_{\nu+1} } ) \beta_{\nu+1}' - 
(\delta^0 |_{ X_{\nu+1} } ) \gamma_{\nu} = \alpha  |_{ X_{\nu+1}}  - 0 .
\end{eqnarray*}

It remains to construct the $\delta_\nu$. 
For all $\nu \in \bbN$ we now define 
\begin{eqnarray}
s_j^{(\nu)} = \beta_{\nu + j } |_{X_\nu} - \beta_\nu 
\ ,  \ j = 1,2, \ldots . \label{2.17}
\end{eqnarray}

We claim that the sequence $\big(s_j^{(\nu)} | X_{\nu} 
\big)_{j=1}^\infty$, viewed as a sequence   
in $ \cF^{\wcheck k} ({X_\nu} ) $ converges with respect to 
the seminorm $\Vert \cdot \Vert_{B(\nu)}$,  when $j \to\infty$. Thus,  by~\eqref{B(n) convergence}, the sequence 
$\big(s_j^{(\nu)} | X_{\nu} \big)_{j=1}^\infty$ converges  
in $ \cF^{\wcheck k} ({X_\nu} ) $.

We will then set
\begin{eqnarray}
\delta_\nu =  \lim\limits_{j \to \infty} s_j^{(\nu)}. \label{2.17a}
\end{eqnarray} 

To verify the claim, let $\nu$ be fixed.  We first observe that by \eqref{2.7},
\eqref{2.13}, 
$$
\beta_{n+1} |_{ X_n} - \beta_n = \gamma_n' - \gamma_n|_{X_n} , 
$$
for all $n \leq \nu$, hence, 
\begin{eqnarray*}
 s_j^{(\nu)} 
&=&
\beta_{ j + \nu}|_{X_\nu}   - \beta_{ j + \nu -1 }|_{X_\nu}   
+ \beta_{ j + \nu - 1}|_{X_\nu}    - \ldots  +
\beta_{ \nu +1}|_{X_{\nu+1}} - \beta_{ \nu}|_{X_\nu}  
\nonumber \\ &=&
\sum\limits_{k=0}^{j} \gamma_{ \nu +k  -1}'|_{X_\nu}  - \gamma_{\nu + k -1} |_{X_\nu}
\in \cF^{\wcheck h} (X_\nu) .
\end{eqnarray*}
In the same way,
\begin{eqnarray*}
& & s_j^{(\nu)}   - s_l^{(\nu)}  
\nonumber \\ &=&
\beta_{ j + \nu}|_{X_\nu}   - \beta_{ j + \nu -1 }|_{X_\nu}   
+ \beta_{ j + \nu - 1}|_{X_\nu}    - \ldots  -
\beta_{ l + \nu}|_{X_\nu}   
\nonumber \\ &=&
\sum\limits_{k=l}^{j-1} \gamma_{k + \nu}'|_{X_\nu}  - \gamma_{k+\nu} |_{X_\nu} ,
\end{eqnarray*}
hence, by \eqref{1509num}, \eqref{2.11}, 
\begin{eqnarray*}
& & 
\Vert s_j^{(\nu)}   -  s_l^{(\nu)} \Vert_{B(\nu)} 
\nonumber \\
&\leq &
\sum\limits_{k=l}^{j-1}
K(k + \nu ,\nu) 
\big\Vert \gamma_{k + \nu}'|_{X_\nu}  -  \gamma_{k + \nu} |_{X_\nu} 
\big\Vert_{k+ \nu , \nu }
\nonumber \\
&\leq & \sum\limits_{k=l}^{j-1} 2^{-\nu -k} < \varepsilon ,
\end{eqnarray*} 
if $l $ is large enough. Therefore the $s_j^{(\nu)}$ form a Cauchy sequence in $\cF^{\wcheck k} (X_\nu)$ and so we have
constructed the $\delta_\nu\in \cF^{\wcheck k} (X_\nu)$. 

Now, from \eqref{2.17}, \eqref{2.17a} we deduce that  the sequences
$\big( \beta_{\nu + j } |_{X_\nu} \big)_{j \in \bbN}$ also converge in $\cF^{\wcheck k} (X_\nu)$ and 
hence,
\begin{eqnarray}
&& ( \delta_\nu - \delta_{\nu +1})|_{X_{\nu-1}} 
= \lim\limits_{j \to \infty} (s_j^{(\nu)} - s_j^{(\nu+1)})|_{X_{\nu-1}} 
\nonumber \\ &=& 
- \beta_\nu |_{X_{\nu-1}}   + \beta_{ \nu +1}  |_{X_{\nu-1}} 
 + \lim\limits_{j \to \infty}
\beta_{\nu + j } |_{X_{\nu-1}}  
- \lim\limits_{j \to \infty} \beta_{\nu  +1 + j } |_{X_{\nu-1}}
\nonumber \\ &=& 
- \beta_\nu |_{X_{\nu-1}}   + \beta_{ \nu +1}  |_{X_{\nu-1}} . 
\nonumber
\end{eqnarray}

\end{document}